\newcommand{\ip}[1]{\langle#1\rangle}
\newcommand{\tens}[1]{%
  \mathbin{\mathop{\otimes}\limits_{#1}}%
}
\newcommand{\Z}{{\mathbb  Z}}
\newcommand{\R}{{\mathbb R}}
\newcommand{\F}{{\mathbb F}}
\DeclareMathOperator{\Slicegr}{SliceGr}
\newcommand{\timesover}[1]{\underset{#1}{\times}}
\newcommand{\Boxover}[1]{\underset{#1}{\Box}}
\newcommand{\otimesover}[1]{\underset{#1}{\otimes}}
\DeclareMathOperator{\Ext}{Ext}
\DeclareMathOperator{\Spec}{Spec}
\newcommand{\tr}{tr}
\newcommand{\Sp}{\mathcal Sp}
\mathchardef\mhyphen=45
\numberwithin{equation}{section}
\newtheorem{theorem}{Theorem}[section]
\newtheorem{lemma}[theorem]{Lemma}
\newtheorem{corollary}[theorem]{Corollary}
\newtheorem{proposition}[theorem]{Proposition}
\newtheorem*{theorem*}{Theorem}
\newtheorem*{proposition*}{Proposition}
\theoremstyle{remark}
\newtheorem{remark}[theorem]{Remark}
\theoremstyle{definition}
\title{On $MU$-homology of connective models of higher real $K$-theories}
\author{Christian Carrick}
\address{Mathematical Institute, Utrecht University, Utrecht, 3584 CD, the Netherlands}
\email{c.d.carrick@uu.nl}
\thanks{The first author was supported by the National Science Foundation under Grant No. DMS-2401918. The second author was supported by the National Science Foundation under Grant No. DMS-2105019}
\author{Michael A. Hill}
\address{University of Minnesota, Minneapolis, MN 55455}
\email{mahill@umn.edu}
\begin{document}

\begin{abstract}
    We use the slice filtration to study the $MU$-homology of the fixed points of connective models of Lubin--Tate theory studied by Hill--Hopkins--Ravenel and Beaudry--Hill--Shi--Zeng. We show that, unlike their periodic counterparts $EO_n$, the $MU$ homology of $BP^{((G))}\langle m\rangle^G$ usually fails to be even and torsion free. This can only happen when the height $n=m|G|/2$ is less than $3$, and in the edge case $n=2$, we show that this holds for $tmf_0(3)$ but not for $tmf_0(5)$, and we give a complete computation of the $MU_*MU$-comodule algebra $MU_*tmf_0(3)$.
\end{abstract}

\maketitle
\section{Introduction}

Let $E_n$ be a height $n$ Lubin--Tate theory, $G$ a finite subgroup of the corresponding Morava stabilizer group, and $EO_n(G):=E_n^{hG}$ the corresponding \emph{higher real $K$-theory}. The $EO_n(G)$-theories were first defined by Hopkins--Miller, intended to provide more computable approximations of the $K(n)$-local sphere. For example, at $p=2$, taking $G=C_2$ at height $n=1$ gives $KO$, and $G=G_{24}$ (the binary tetrahedral group) at height $n=2$ gives $TMF$, up to $K(n)$-localization.

Hopkins--Miller also observed that these theories are most powerful when they admit good connective models $eo_n(G)$, with $ko$ and $tmf$ being the prototypes. Such connective models should satisfy certain finiteness properties and therefore lend themselves to sparse computations, and they should in particular be accessible from the point of view of both the Adams and Adams--Novikov spectral sequences. 

At the prime \(2\), we have good candidates for the $eo_n$ theories: the $BP^{((G))}\langle m\rangle^{G}$s of Hill--Hopkins--Ravenel (HHR), and HHR showed these theories are computationally accessible via the slice spectral sequence. At height $0$, one has $BP^{((G))}\langle 0\rangle^{G}\simeq H\Z$ for all $G$. At height $1$, the spectrum $BP^{((C_2))}\langle 1\rangle^{C_2}=BP_\R\langle1\rangle^{C_2}$ gives $ko$. At height $2$, a theorem of the second author and Meier \cite{hillmeier} shows that $BP^{((C_2))}\langle 2\rangle^{C_2}=BP_\R\langle2\rangle^{C_2}$ gives $tmf_0(3)$, and it is expected that $BP^{((C_4))}\langle 1\rangle^{C_4}$ gives the spectrum $tmf_0(5)$. The height $4$ theory $BP^{((C_8))}\langle 1\rangle^{C_8}$ is a connective model of the detection spectrum $\Omega$ of HHR used in their solution of the Kervaire invariant one problem.

\begin{theorem}\label{thm1}
    For $|G|>2$ and $m>0$, the $MU$-homology of $BP^{((G))}\langle m\rangle^G$ has torsion classes in odd degrees. When $G=C_2$, the $MU$-homology of $BP_\R\langle m\rangle^{C_2}$ is torsion free and concentrated in even degrees for $m\le 2$ and has torsion classes in odd degrees for $m>2$.
\end{theorem}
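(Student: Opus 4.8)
The plan is to compute $MU_*BP^{((G))}\langle m\rangle^G$ via the slice spectral sequence, exploiting the fact that $MU \wedge BP^{((G))}\langle m\rangle$ has a slice associated graded that is a wedge of slice cells with known homotopy, so that the $MU$-homology of the fixed points is governed by the $RO(G)$-graded homotopy of a (generalized) Eilenberg--Mac~Lane $G$-spectrum. Concretely, I would smash the Hill--Hopkins--Ravenel slice tower for $BP^{((G))}\langle m\rangle$ with $MU$ (viewed as a nonequivariant spectrum, i.e.\ take $MU \wedge_{S^0} BP^{((G))}\langle m\rangle$ with the $G$-action on the second factor), identify the resulting tower as a tower of $MU_*MU$-comodules, and then pass to $G$-fixed points. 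The slices of $BP^{((G))}\langle m\rangle$ are wedges of $G\wedge_{C_2} S^{k\rho_2}$-type slice cells built from the $\bar r_i$ and the truncating generators, and after smashing with $MU$ the homotopy of each associated graded piece becomes a free $MU_*$-module; the fixed points of an induced slice cell $G_+\wedge_H S^{k\rho_H}\wedge MU$ contribute $MU_*(G/H_+ \wedge S^{k\rho_H/H})$, which for $H = C_2$ and $k\geq 0$ is $MU_*$ in a single even degree, but whose \emph{attaching maps} in the tower can produce transfers and restrictions that fail to split integrally.

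The key steps, in order: (1) Set up the $MU$-smashed slice tower and show its $E_1$-page is the associated graded of a filtered $MU_*MU$-comodule with explicit polynomial generators coming from the $\bar r_i$ (degree $|\bar r_i|$ classes) together with the height-truncation data; here I would use that $MU_*BP^{((G))}$ itself is even and free, so all complications come from the finitely many truncation generators that distinguish $\langle m\rangle$ from $\langle \infty\rangle$. (2) Analyze the fixed-point tower: the differentials and extensions in the homotopy fixed point / slice spectral sequence for the relevant $H\underline{\mathbb{Z}}$-module slices are controlled by the norm and the classes $a_\sigma$, $u_{2\sigma}$, and the upshot is that odd-degree torsion in $MU_*(-)^G$ appears exactly when a transfer of a positive-dimensional $MU_*$-generator is hit by (equivalently, supports) a nontrivial $a_\sigma$-multiplication or $d_3$-type differential — and a dimension count shows this first happens for $|G|>2$ whenever $m>0$, and for $G=C_2$ only once $m>2$. (3) For the $C_2$ case $m\le 2$, show directly that $BP_\mathbb{R}\langle m\rangle^{C_2}$ has $MU$-homology even and torsion free by recognizing $m=0$ as $H\mathbb{Z}$ (where $MU_*H\mathbb{Z}$ is known), $m=1$ as $ko$ (where $MU_*ko$ is classically even and torsion free), and $m=2$ as $tmf_0(3)$ via \cite{hillmeier}, then invoke the explicit computation of $MU_*tmf_0(3)$ from the abstract; the threshold $m>2$ is then where the restriction map $MU_*BP_\mathbb{R}\langle m\rangle^{C_2}\to MU_*BP_\mathbb{R}\langle m\rangle$ first acquires a kernel forced by the slice filtration, detected by an exotic transfer.

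The main obstacle I expect is step (2): controlling precisely \emph{which} slice differentials and hidden extensions survive after smashing with $MU$. Smashing with $MU$ can kill differentials that are present in the sphere-based slice spectral sequence (since $MU_*$-linearity gives more room), so one cannot simply read off the answer from HHR's computation of $\pi_*^G BP^{((G))}\langle m\rangle$; instead one must track the $MU_*MU$-comodule structure carefully and rule out (or exhibit) the relevant $d_3$'s by comparing with the known comodule structure on $MU_*BP^{((G))}$ and the Hopf-algebroid-theoretic constraints on torsion. Making the ``dimension count'' rigorous — i.e.\ pinning down the bidegree of the first potentially-offending transfer class as a function of $m$ and $|G|$, and showing the count gives the stated thresholds $|G|>2 \Rightarrow m>0$ and $G=C_2 \Rightarrow m>2$, together with the height constraint $n = m|G|/2 < 3$ from Theorem~\ref{thm1}'s companion statement — is where the real work lies.
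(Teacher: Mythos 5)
Your general framework is the right one --- the paper does exactly compute $BP_*BP^{((G))}\langle m\rangle^G$ by smashing the HHR slice tower with the inflation $i_*BP$ and running the resulting spectral sequence, whose $E_2$-page is $A_m\otimes\Z[t_1,t_2,\ldots]$ with $A_m$ the ordinary slice $E_2$-page. But the substance of the proof is the \emph{survival} argument for specific odd-degree permanent cycles, and this is where your proposal has a genuine gap. Your stated mechanism --- that odd torsion appears ``exactly when a transfer of a positive-dimensional generator is hit by a differential,'' detected by a ``dimension count'' --- mischaracterizes the problem: the candidate odd classes sit on the $E_2$-page from the start (e.g.\ $\eta_{C_4}=a_{\overline{\rho}_4}N_2^4(\overline{r}_1)$ in bidegree $(1,3)$ for $|G|>2$, and $a_\sigma^5 w_{2\sigma}\overline{v}_3$ in bidegree $(9,5)$ for $G=C_2$, $m>2$), and the entire difficulty is ruling out differentials \emph{into} them. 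A dimension count alone cannot do this, because there genuinely are classes in the correct bidegrees to kill them: for $|G|>2$ the class $t_1$ in bidegree $(2,-2)$ could a priori support $d_5(t_1)=\eta_{C_4}$. The paper's argument is multiplicative, not numerical: since $\eta=0$ in $BP_*$, one is forced to have $d_3(t_1)=\eta$, so bidegree $(2,-2)$ on $E_5$ is generated by $[2t_1]$; then $d_5([2t_1])=\eta_{C_4}$ is impossible because $[2t_1]\cdot\eta_{C_4}=0$ ($\eta_{C_4}$ is $2$-torsion) while the Leibniz rule would force $(\eta_{C_4})^2=0$, contradicting its nonvanishing on $E_5$ (which itself requires checking that bidegree $(3,3)$ consists of permanent transfer classes). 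None of this structure is present in your sketch.

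For the $C_2$ case the same issue recurs: establishing the thresholds requires computing the $d_3$'s ($d_3(u_{2\sigma})=a_\sigma^3\overline{v}_1$, $d_3(t_1)=a_\sigma\overline{v}_1$) and $d_7$'s ($d_7(t_1^2)=a_\sigma^3\overline{v}_2$, $d_7(t_2)=a_\sigma w_{2\sigma}\overline{v}_2$) --- which the paper lifts from the $i_*H\F_2$-based spectral sequence via the injectivity of the comparison map on the ideal $(a_\sigma)$ --- and then checking that for $m\le 2$ the surviving $E_8$-page has no $a_\sigma$-contribution in integer degrees, while for $m>2$ the class $a_\sigma^5w_{2\sigma}\overline{v}_3$ in stem $9$ cannot be hit because every potential source is either a transfer (hence a permanent cycle) or already supports an earlier differential. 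Your step (3) for $m=2$ also leans on ``the computation of $MU_*tmf_0(3)$ from the abstract,'' which is Theorem B of the same paper and is itself proved using the evenness statement you are trying to establish; the non-circular route is the $E_8$-page analysis just described. In short: the spectral sequence is set up correctly, but the proof as proposed is missing its core, namely the identification of the specific odd classes and the Leibniz/transfer arguments that show they survive.
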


In contrast, the first author showed that the $BP$-homology of $EO_n(G)$ is always torsion free and concentrated in even degrees \cite{defect}. 

This theorem marks a qualitative difference in the complexity of these theories when one moves beyond height $2$, reflecting similar observations of the authors and Ravenel in \cite[Theorem 1.11]{CHR}. The computations of \cite{HHR} show that, for $m>0$, there are nonzero classes 
\[\eta_G:=a_{\overline{\rho}_G}N(\overline{r}_1^G)\in\pi_1BP^{((G))}\langle m\rangle^G\] with $2\eta_G=\eta_G^3=0$ of slice filtration $|G|-1$ lying on the slice vanishing line. When $G=C_2$, $\eta_G$ is the usual Hopf map $\eta$, which is sent to zero in $MU$-homology. In contrast, we prove Theorem \ref{thm1} by showing that, for $|G|>2$, $\eta_G$ is nonzero in $MU$-homology and hence has Adams--Novikov filtration zero.

Significant computations with $BP^{((G))}\langle m\rangle^{G}$ have been carried out using the slice spectral sequence. For example, HHR computed the slice spectral sequence of $BP^{((C_4))}\langle 1\rangle^{C_4}$\cite{hhrc4}, and Hill--Shi--Wang--Xu computed the slice spectral sequence of the height 4 theory $BP^{((C_4))}\langle 2\rangle^{C_4}$ \cite{hswx}. Little is known about the Adams spectral sequence and Adams--Novikov spectral sequence of these theories, however, even at height $2$. 

In \cite{CHR}, the authors and Ravenel introduced a family of spectral sequences that leverage the well-understood slice filtration to compute things like $H_*(BP^{((G))}\langle m\rangle^G)$ and $MU_*BP^{((G))}\langle m\rangle^G$, which are the input to the Adams spectral sequence and Adams--Novikov spectral sequence respectively. The authors  computed the homology groups $H_*(BP^{((C_2))}\langle m\rangle^{C_2})$ for $m\le 3$. This resulted for example in an understanding of the Adams spectral sequence for $tmf_0(3)$ and revealed also a divergence of complexity and behavior beginning at height 3.

In this paper, we use the setup of \cite{CHR} to carry out a similar analysis on the Adams--Novikov spectral sequence side. This is especially interesting because the slice spectral sequence of $BP^{((G))}\langle m\rangle$ behaves in many ways like an Adams--Novikov spectral sequence. For example, the slice spectral sequence of $BP^{((C_2))}\langle 1\rangle=BP_\R\langle 1\rangle$ is isomorphic to the Adams--Novikov spectral sequence of $ko$, as one may observe directly. The authors were thus motivated to see how generally such an identification holds.

Our findings show that, beyond the degenerate height zero case, this holds only for $ko$. The zero line in the slice spectral sequence for $BP^{((G))}\langle m\rangle$ is concentrated in even degrees, so Theorem \ref{thm1} shows this cannot agree with the Adams--Novikov spectral sequence for $BP^{((G))}\langle m\rangle^G$ whenever $|G|>2$ or $m>2$. In the remaining edge case $|G|=m=2$, we give a full computation.

\begin{theorem}\label{thm2}
    There is an isomorphism of $MU_*MU$-comodule algebras 
    \[MU_*tmf_0(3)\to (MU_*MU\tens{MU_*}A)\Boxover{\Gamma}M\]
    where $(A,\Gamma)=(\Z[a_1,a_3],A[s,t]/\sim)$ is the $2$-local Weierstrass algebroid of \cite[Section 7]{bauer}, and \[M=A\{1,e_4,e_6,e_8,e_{10},e_{12}\}\]
    is an $(A,\Gamma)$-comodule algebra, given by explicit formulas in Theorem \ref{thm:tmf03computation}.
\end{theorem}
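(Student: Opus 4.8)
The plan is to leverage the identification $BP_\R\langle 2\rangle^{C_2}\simeq tmf_0(3)$ of Hill--Meier together with the $C_2$-equivariant slice machinery from \cite{CHR} specialized to $BP^{((C_2))}\langle 2\rangle = BP_\R\langle 2\rangle$. By Theorem \ref{thm1}, the $MU$-homology of $BP_\R\langle 2\rangle^{C_2}$ is torsion free and concentrated in even degrees, so the starting point is a clean additive answer: $MU_*tmf_0(3)$ is a free $\Z_{(2)}$-module in even degrees, and the spectral sequence of \cite{CHR} computing it from the slice associated graded must collapse for degree reasons. First I would run that spectral sequence to pin down the additive structure and Poincar\'e series of $MU_*tmf_0(3)$ as an $MU_*$-module, identifying a basis $\{1, e_4, e_6, e_8, e_{10}, e_{12}\}$ over $MU_*$ (this is where the module $M$ over $A=\Z_{(2)}[a_1,a_3]$ comes from after base change).

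Next I would establish the comodule structure. The natural map $BP_\R\langle 2\rangle \to BP_\R$ (or rather the unit and the $MU$-orientation) induces a map of Hopf algebroids, and the Weierstrass algebroid $(A,\Gamma)$ of \cite[Section 7]{bauer} is precisely the one governing $tmf_0(3)$ via the cubic curve $y^2 + a_1 xy + a_3 y = x^3$. The key point is that $MU_*tmf_0(3)$ should be \emph{induced} in the sense that it is obtained from an $(A,\Gamma)$-comodule by cobase change along $MU_*\to A$: concretely, $MU_*tmf_0(3)\cong (MU_*MU\tens{MU_*}A)\Boxover{\Gamma}M$. To prove this I would show that the $A$-module $M$ with its $\Gamma$-coaction (read off from the explicit elliptic-curve formulas, i.e. the $a_i$-expansions of the $e_{2j}$ as modular-form-like classes) maps to $MU_*tmf_0(3)$ compatibly, and that the induced map of comodules is an isomorphism — which reduces, after the cobase-change adjunction, to checking it is an isomorphism of $A$-modules, and that we already know from the additive computation.

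The explicit formulas referenced in Theorem \ref{thm:tmf03computation} come from tracking the generators $\br_i = \overline{r}_i$ and the norms $N(\br_i)$ through the slice tower: the classes $e_{2j}$ are represented by specific polynomials in the $MU_*$-generators and the Weierstrass coordinates $a_1, a_3$, and their coaction is computed from the standard formulas for how $s,t$ act on a Weierstrass cubic (the change-of-coordinates $(x,y)\mapsto (x + s^2, y + s x + t)$ type formulas in \cite{bauer}). I would organize this as: (i) write down $M$ and its coaction abstractly; (ii) construct the comparison map $MU_*tmf_0(3) \to (MU_*MU\tens{MU_*}A)\Boxover{\Gamma}M$ using the $MU$-orientation and the Hill--Meier identification; (iii) check it is an isomorphism by comparing Poincar\'e series (both sides now known) and surjectivity on module generators.

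The main obstacle I anticipate is step (ii)--(iii): verifying that the comparison map is well-defined as a map of comodule \emph{algebras} and identifying the coaction on the $e_{2j}$ precisely enough to match Bauer's Weierstrass algebroid. The additive collapse is essentially forced by Theorem \ref{thm1}, but the multiplicative and comodule structure require genuinely tracking the slice associated graded's ring structure and the image of the $C_2$-norm classes under the $MU$-orientation $BP_\R\langle 2\rangle \to MU \wedge BP_\R\langle 2\rangle$; matching these against the cubic-curve formulas — rather than just some abstract algebroid — is the delicate bookkeeping that makes the theorem more than a Poincar\'e-series count. I expect the cleanest route is to first prove the isomorphism over $BP$ (where the Hazewinkel/Araki generators interact transparently with the $\br_i$) and then base change up to $MU$.
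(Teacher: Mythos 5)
Your starting point (evenness and torsion-freeness of $MU_*tmf_0(3)$ from the slice spectral sequence, hence collapse and a clean additive answer) matches the paper, but there is a genuine gap at the heart of your plan: you assert that $MU_*tmf_0(3)$ ``should be induced'' in the cotensor form $(MU_*MU\tens{MU_*}A)\Boxover{\Gamma}M$ and propose to verify this by a cobase-change adjunction plus a Poincar\'e-series count. That verification is circular as stated --- to know the Poincar\'e series of the right-hand side you already need to know $M$ and the cotensor structure --- and, more importantly, you never say what $M$ \emph{is}. The paper's key structural input is the identification $M=\pi_*(tmf_1(3)\otimesover{tmf}tmf_0(3))$ together with Proposition \ref{prop:cotensor}: for every $2$-local compact $tmf$-module $F$ there is a natural isomorphism $MU_*F\cong(MU_*MU\tens{MU_*}A)\Boxover{\Gamma}\pi_*(tmf_1(3)\otimesover{tmf}F)$. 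This is proved not by formal adjunction but by showing both sides are homology theories on compact $tmf$-modules (Milnor--Moore makes $MU_*MU\tens{MU_*}A$ a cofree $(A,\Gamma)$-comodule, so the cotensor is exact) and then checking the universal case $F=tmf$, where it is the descent statement for the cover $\Spec(A)\to\mathcal{M}_{cub}$. Without this lemma your comparison map has no computable target.

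Second, your method for producing $M$ and its coaction --- ``read off from the elliptic-curve formulas'' and ``track $\br_i$ and $N(\br_i)$ through the slice tower'' --- is exactly the part that needs a concrete mechanism, and the slice tower does not hand you the Weierstrass coordinates. The paper instead (a) computes $\pi_*(tmf_1(3)\otimesover{tmf}tmf_0(3))$ by an Adams spectral sequence relative to $tmf\to H\F_2$, using the $10$-cell splitting $tmf_0(3)\simeq tmf\otimes X$ and a change of rings to $\Ext_{\mathcal{E}(2)_*}(\F_2,H_*X)$, which collapses for parity reasons; (b) identifies the generators $e_4,\dots,e_{12}$ as the transfers $tr(s^2)$, $tr(s^3)$, $tr(t)$, $tr(st)$, $tr(s^2t)$, $tr(s^3t)$ via a $C_2$-Mackey functor argument in relative $H\F_2$-homology; and (c) computes their images in $\Gamma$ from $res\circ tr=1+\gamma$ and the action formulas $\gamma(s)=s-\eta_R(a_1)$, $\gamma(t)=t-\eta_R(a_3)$, using that the restriction to $\Gamma$ is injective. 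Note also that $M$ is not free over $A$ on the $e_i$ (there are relations such as $a_1e_4=2e_6$), so ``identifying a basis over $MU_*$'' is not the right framing. To salvage your outline you would need to supply the analogue of Proposition \ref{prop:cotensor} and an actual computation of $\pi_*(tmf_1(3)\otimesover{tmf}tmf_0(3))$ as an $(A,\Gamma)$-comodule algebra.
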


\begin{remark}
    A low dimensional $\mathrm{Ext}$ calculation using Theorem \ref{thm2} shows that the Hopf map $\nu$ is detected in Adams--Novikov spectral sequence filtration 1 in $tmf_0(3)$, whereas it is detected in slice filtration 3, so these spectral sequences do not agree. This points to a surprising difference between the connective model $tmf_0(3)$ and its projective and periodic variants $Tmf_0(3)$ and $TMF_0(3)$, both of which detect $\nu$ in Adams--Novikov spectral sequence filtration 1.
\end{remark}

\subsection*{Summary} We begin in Section \ref{sec:2} by recalling the necessary ingredients regarding the slice spectral sequence with coefficients from \cite{CHR} to perform our computations. In Section \ref{sec:3}, we prove the $|G|>2$ cases of Theorem \ref{thm1}. This is easier than the $C_2$ cases since the torsion appears already in degree 1, whereas we need to look in degree 9 in the $C_2$-cases. We verify the latter cases in Section \ref{sec:4} and compute $BP_*ko$. We finish in Section \ref{sec:5} by computing $BP_*tmf_0(3)$.

We work $2$-locally throughout. In particular, we compute $BP$-homology in practice, and the results on $MU$-homology stated in the introduction are easy consequences of the $2$-local splitting of $MU$ as a wedge of shifts of $BP$.

\subsection*{Acknowledgments} 
The authors are grateful to Hausdorff Research Institute for Mathematics in Bonn funded by the Deutsche Forschungsgemeinschaft (DFG, German Research Foundation) under Germany's Excellence Strategy - EXC-2047/1 - 390685813 and to the organisers of the trimester program \emph{Spectral Methods in Algebra, Geometry, and Topology} 2022. 
The first-named author would like to thank Lennart Meier and Jack Davies for helpful comments and discussion.

\section{The $BP$-based slice spectral sequence}\label{sec:2}
In \cite{CHR}, the authors and Ravenel developed a generalized slice filtration to compute the $K$ homology of $E$ via smashing the slice tower of $E$ with $K$, for $K,E\in \Sp^G$. This produces a spectral sequence of signature
\[E_2=\pi_\bigstar(K\otimes\Slicegr(E))\implies K_\bigstar E\]
Setting $K=i_*BP$ (a.k.a. the inflation of $BP$) and $E=BP^{((G))}\langle m\rangle$, the corresponding spectral sequence converges to $i_*BP_\bigstar BP^{((G))}\langle m\rangle$, and in integer degrees, we have an isomorphism
\[(i_*BP)_*BP^{((G))}\langle m\rangle\cong BP_*BP^{((G))}\langle m\rangle^G\]
Collecting results from \cite[Section 2.1]{CHR}, we have the following convenient properties of this spectral sequence.

\begin{proposition}\label{prop:ssvanishinglines}
    The $i_*BP$-based slice spectral sequence of $BP^{((G))}\langle m\rangle$ 
    \[E_2^{s,t}=\pi_{t-s}(i_*BP\otimes\Slicegr(BP^{((G))}\langle m\rangle))\implies\pi_{t-s}(i_*BP\otimes BP^{((G))}\langle m\rangle)\]
    converges strongly and is a right half-plane spectral sequence concentrated between the lines $s=(|G|-1)(t-s)$ and $s=-(t-s)$ from $E_2$ on. It is a spectral sequence of $BP_*BP$-comodules, and if $BP^{((G))}\langle m\rangle$ is a ring spectrum, one of $BP_*BP$-comodule algebras.
\end{proposition}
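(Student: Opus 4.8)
Throughout I abbreviate $X=\BPGn$, and I will use the convention of \cite{CHR} in which the bidegree $(s,t)$ is normalized so that $t$ records the slice degree and $t-s$ the stem.

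The plan is to apply the general construction of \cite{CHR} to the tower $\{i_*BP\otimes P^nX\}$ obtained by smashing the slice tower of $X$ with $i_*BP$, and to read off the structural statements from the Slice and Reduction Theorems of \cite{HHR} together with two connectivity estimates. First I would pin down the $E_2$-page: by \cite{HHR}, $\Slicegr(X)$ is a wedge of $H\mZ$-modules $H\mZ\wedge\big(G_+\smashover{H}S^{k\rho_H}\big)$ with $H\le G$ and $k\ge 0$, the summand attached to a slice cell of dimension $k|H|$ sitting in slice degree $k|H|$. In particular the slices are concentrated in non-negative even slice degrees, which in the chosen normalization already constrains the spectral sequence to $t\ge 0$.

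Next I would compute $\pi^G_{t-s}$ of $i_*BP$ smashed with one such summand. Since the restriction of an inflated spectrum is inflated, $\Res^G_H i_*BP\simeq i_*BP$, so the projection formula gives
\[i_*BP\otimes H\mZ\wedge\big(G_+\smashover{H}S^{k\rho_H}\big)\simeq G_+\smashover{H}\big(i_*BP\otimes H\mZ\wedge S^{k\rho_H}\big),\]
whose $\pi^G_*$ equals $\pi^H_*\big(i_*BP\otimes H\mZ\wedge S^{k\rho_H}\big)$. Writing $\rho_H=1+\overline{\rho}_H$, the spectrum $H\mZ\wedge S^{k\overline{\rho}_H}$ is $(-1)$-connected with non-trivial $\pi_0$ — generated by the image of the Euler class $a_{k\overline{\rho}_H}$ — because $S^{k\overline{\rho}_H}$ is a based $H$-CW complex with cells in non-negative dimensions; hence $H\mZ\wedge S^{k\rho_H}=S^k\wedge H\mZ\wedge S^{k\overline{\rho}_H}$ is $(k-1)$-connected with non-trivial $\pi_k$. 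As $i_*BP$ is connective with $\pi_0\cong\Z$, smashing with it preserves both facts. Thus the slice summand of slice degree $d=k|H|$ contributes to $\pi^G_j$ precisely for $j\ge k=d/|H|$.

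I would then read off the shape. A class in bidegree $(s,t)$ comes from a slice of degree $t$, so $t-s\ge t/|H|\ge t/|G|=(s+(t-s))/|G|$, which rearranges to $s\le(|G|-1)(t-s)$ — the upper line; it is sharp because for $m>0$ the cells $S^{k\rho_G}\wedge H\mZ$ occur in $\Slicegr(X)$ and realize it (this is where classes such as $a_{\overline{\rho}_G}N(\overline{r}_1^G)$ sit). Dually, $t=s+(t-s)\ge 0$ gives $s\ge-(t-s)$ — the lower line — realized in the limit $t=0$ by the $0$-slice $i_*BP\wedge H\mZ$, whose homotopy is non-trivial in every non-negative even degree; and $t-s\ge 0$ since every slice summand is connective. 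Hence the spectral sequence is a right half-plane spectral sequence lying between the two stated lines, so each total degree meets only the finitely many filtrations $-(t-s)\le s\le(|G|-1)(t-s)$. For convergence I would use that $X$ is bounded below, so its slice tower converges and the fibers of $X\to P^nX$ become arbitrarily slice-connected; by the slice-versus-Postnikov comparison of \cite{HHR} they become arbitrarily connected in the ordinary sense, so smashing with the connective spectrum $i_*BP$ gives $i_*BP\otimes X\simeq\holim_n i_*BP\otimes P^nX$ with $\lim^1=0$, hence conditional convergence; combined with the finiteness of filtrations in each total degree this upgrades to strong convergence. In integer degrees the target is identified with $BP_*\BPGn^G$ as recalled just before the proposition.

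Finally I would treat the algebraic structure. The functor $(i_*BP)_*(-)$ is naturally valued in $BP_*BP$-comodules — here one uses the identification from \cite{CHR} of the integer-graded homotopy of $i_*BP$ and of $i_*BP\wedge i_*BP$ with $BP_*$ and $BP_*BP$ — so applying it to the $G$-equivariant maps of the slice tower makes the whole tower, hence the spectral sequence and all its differentials, one of $BP_*BP$-comodules. When $\BPGn$ admits a ring structure, multiplicativity of the slice filtration \cite{HHR} makes $\Slicegr(\BPGn)$, hence $i_*BP\otimes\Slicegr(\BPGn)$, a ring compatibly with the coaction, which promotes the spectral sequence to one of $BP_*BP$-comodule algebras. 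I expect the one genuinely delicate point to be extracting the upper line $s=(|G|-1)(t-s)$: it rests on the precise form of the Slice and Reduction Theorems, on the exact location $j\ge k$ of the bottom of $i_*BP\otimes H\mZ\wedge S^{k\rho_H}$, and on the observation that the induction $G_+\smashover{H}(-)$ can only improve the slope since $|H|\le|G|$. The convergence and comodule assertions are then formal.
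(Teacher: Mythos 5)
Your argument is correct, and it is essentially the route the paper relies on: the paper offers no proof of this proposition, simply collecting the statements from \cite[Section 2.1]{CHR}, and your reconstruction via the HHR Slice Theorem (slices are wedges of $H\mZ\wedge G_+\smashover{H}S^{k\rho_H}$), the connectivity estimate giving the slope-$(|G|-1)$ and slope-$(-1)$ lines, conditional-plus-vanishing-line convergence, and multiplicativity of the slice tower is exactly the standard argument behind those citations. The only cosmetic point is that your remarks on sharpness (nonvanishing of the bottom Euler classes and of $\pi_*^G(i_*BP\wedge H\mZ)$) are not needed for the statement, which only asserts vanishing outside the two lines.
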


In contrast with the $i_*H\F_2$-based slice spectral sequence studied in \cite{CHR}, the $E_2$-page of the $i_*BP$-based version has a more straightforward structure. This follows from the slice theorem of Hill--Hopkins--Ravenel.

\begin{proposition}\label{prop:e2bpgm}
    The $E_2$-page of the $i_*BP$-based slice spectral sequence of $BP^{((G))}\langle m\rangle$ is given by
    \[A_m\otimes\Z[t_1,t_2,\ldots]\]
    where $A_m$ is the $E_2$-page of the ordinary slice spectral sequence of $BP^{((G))}\langle m\rangle$, and $|t_i|=(2^i-1,1-2^i)$.
\end{proposition}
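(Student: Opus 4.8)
The plan is to bootstrap off the ordinary slice $E_2$-page $A_m = \pi_\bigstar\Slicegr(BP^{((G))}\langle m\rangle)$, exploiting the $H\mZ$-module structure that the slice theorem puts on $\Slicegr$. By the slice theorem of \cite{HHR}, every slice of $E := BP^{((G))}\langle m\rangle$ is a wedge of slice cells smashed with $H\mZ$, so $\Slicegr(E)$ is an $H\mZ$-module, and
\[ i_*BP\otimes\Slicegr(E)\;\simeq\;(i_*BP\otimes H\mZ)\otimes_{H\mZ}\Slicegr(E). \]
Everything therefore reduces to identifying $i_*BP\otimes H\mZ$ as an $H\mZ$-module.

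First I would show $i_*BP\otimes H\mZ\simeq\bigvee_\mu\Sigma^{|\mu|}H\mZ$ as $H\mZ$-modules: a wedge of suspensions of $H\mZ$ by \emph{trivial} representations, indexed by a $\Z_{(2)}$-basis $\{\mu\}$ of $H_*(BP;\Z_{(2)})=\Z_{(2)}[t_1,t_2,\ldots]$ with $|t_i|=2(2^i-1)$. Since $\pi_*BP$ and $H_*(BP;\Z_{(2)})$ are torsion-free and concentrated in even degrees, $BP$ has a cell structure with cells only in even dimensions, and hence so does $i_*BP$, with cells $i_*S^{2k}=S^{2k}$ (the trivial-action $G$-sphere). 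Smashing with $H\mZ$ turns these into copies of $\Sigma^{2k}H\mZ$, and all the attaching maps are forced to vanish: maps of $H\mZ$-modules between even suspensions of $H\mZ$ that raise degree by an odd amount are classified by the groups $\pi^G_{\mathrm{odd}}(H\mZ)$, which are zero since $\pi^G_n H\mZ=0$ for $n\neq 0$. Matching the resulting splitting against $H_*(BP;\Z_{(2)})$ gives the asserted form. Substituting back, $i_*BP\otimes\Slicegr(E)\simeq\bigvee_\mu\Sigma^{|\mu|}\Slicegr(E)$, and applying $\pi_\bigstar$ gives $E_2\cong\bigoplus_\mu\Sigma^{|\mu|}A_m\cong A_m\otimes\Z[t_1,t_2,\ldots]$. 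Since the splitting is $H\mZ$-linear it restricts to a compatible splitting of every slice $P^s_sE$, hence of every layer of the slice tower of $E$, so each $t_i$ sits in slice filtration $0$ in topological degree $2(2^i-1)$, which in the bigrading of the statement is the bidegree $(2^i-1,1-2^i)$. When $BP^{((G))}\langle m\rangle$ is a ring spectrum so is $i_*BP$, and the same argument makes this an isomorphism of $A_m$-algebras, with the polynomial structure on the $t_i$ coming from $\pi_*(BP\otimes H\Z)\cong\Z_{(2)}[t_1,t_2,\ldots]$.

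The step I expect to be the crux is the splitting of $i_*BP\otimes H\mZ$. General $H\mZ$-modules are far from free, so the argument has to use genuinely that inflation preserves the even cell structure of $BP$ — which is exactly why the suspensions occur by trivial representations rather than by $\rho_G$ — together with the \EM{} property of $H\mZ$ to kill the attaching maps. The remaining points — invoking the slice theorem so that $\Slicegr(BP^{((G))}\langle m\rangle)$ is visibly assembled from $H\mZ$-modules, and tracking the splitting through the tower to read off the bigrading and the algebra structure — are routine bookkeeping.
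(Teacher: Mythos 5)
Your proposal is correct, but it takes a genuinely different route from the paper's. The paper's proof passes to genuine fixed points, identifying the $E_2$-page with $BP_*\bigl(\Slicegr BP^{((G))}\langle m\rangle^G\bigr)$, observes via the slice theorem that this fixed-point spectrum is an $H\Z$-module and hence complex orientable, and then quotes the classical fact that $BP_*R\cong R_*\otimes\Z[t_1,t_2,\ldots]$ for complex orientable $R$ (\cite[4.1.7]{ravgreen}). You instead stay equivariant: using an even cell structure on $BP$ you split $i_*BP\otimes H\mZ$ as a wedge of even \emph{trivial} suspensions of $H\mZ$, and then base change over $H\mZ$ to the slice associated graded; in effect you re-prove the cited classical fact equivariantly. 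What your route buys: the splitting happens at the level of $G$-spectra and is compatible with each slice, so you obtain the $RO(G)$-graded (indeed Mackey-functor-valued) statement and the filtration-wise splitting directly, without needing the projection-formula identification $\pi^G_*(i_*BP\otimes X)\cong BP_*(X^G)$ for the inflated $BP$. What the paper's route buys: brevity, and the fact that its $t_i$ are literally the images of the standard generators $t_i\in BP_*BP$ under the orientation, which is what gets used later (e.g.\ in Proposition \ref{prop:morphismofHSSS}, where $t_i\mapsto\zeta_i^2$); in your construction you should choose the wedge summands corresponding to those classes rather than to arbitrary polynomial generators of $H_*(BP;\Z_{(2)})$, which are not literally the $t_i$. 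Two minor points to tighten: the vanishing of attaching maps should be organized as an induction over the (possibly infinite) skeleta, using that a map out of the compact cell $S^{2k-1}$ factors through a finite subwedge and that $\underline{\pi}_nH\mZ=0$ for $n\neq 0$ at all subgroups, so that the resulting comparison map from the wedge is a $\underline{\pi}_*$-isomorphism; and $i_*BP$ is a ring spectrum regardless of whether $BP^{((G))}\langle m\rangle$ is, so the multiplicativity clause needs no such hypothesis.
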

\begin{proof}
    The $E_2$-page in question is given by 
    \[\pi^G_*(i_*BP\otimes \Slicegr BP^{((G))}\langle m\rangle)=BP_*\Slicegr BP^{((G))}\langle m\rangle^G\]
    By the HHR slice theorem \cite[Theorem 6.1]{HHR}, $\Slicegr BP^{((G))}\langle m\rangle$ is an $H\underline{\Z}$-module, so that
    applying fixed points, one has an $H\Z$-module. In particular, the slice associated graded of $BP^{((G))}\langle m\rangle^G$ is complex-orientable, so the result follows from the fact that for any complex-orientable spectrum $R$, one has an isomorphism
    \[BP_*R\cong R_*\otimes\Z[t_1,t_2,\ldots]\]
    (see \cite[4.1.7]{ravgreen}).
 \end{proof}

\begin{remark}
    As in \cite{CHR}, we work only in the positive cone for the $G=C_2$ case. Working in the positive cone is equivalent to working motivically with the motivic lifts $BPGL\ip{n}$ of $BP_\R\ip{n}$. However, we stick to equivariant language in this article and refer to the motivic setting only to justify working in the positive cone. We will make no further mention of this and work in the positive cone throughout.
\end{remark}

\subsection{Comparison with the $i_*H\F_2$-based slice spectral sequence for $G=C_2$} The truncation map $BP\to H\F_2$ induces a map from the $i_*BP$-based slice spectral sequence of $BP_\R\ip{n}$ to the $i_*H\F_2$-based slice spectral sequence of $BP_\R\ip{n}$, which we use to deduce differentials in the former from the latter.

\begin{proposition}\label{prop:morphismofHSSS}
    The morphism of slice spectral sequences for $BP_\R\ip{n}$ induced by $i_*BP\to i_*H\F_2$ is given on $E_2$ by the map
    \[\Z[a_\sigma,u_{2\sigma},\overline{v}_1,\ldots,\overline{v}_n,t_1,t_2,\ldots]/(2a_\sigma)\to (\mathcal{A}_*\Boxover{\mathcal{A}(0)_*}\F_2)[a_\sigma,x_1,\overline{v}_1,\ldots,\overline{v}_n]\]
    sending
    \begin{align*}
        a_\sigma&\mapsto a_\sigma,&
        u_{2\sigma}&\mapsto x_1^2+a_\sigma^2\zeta_1^2,&\overline{v_i}&\mapsto \overline{v_i},&
        t_i&\mapsto \zeta_i^2.
    \end{align*}
\end{proposition}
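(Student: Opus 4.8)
The plan is to use multiplicativity and reduce to computing the map on polynomial generators. Since $\BPRn$ is a ($C_2$-)ring spectrum and $i_*BP\to i_*H\F_2$ is a map of ($C_2$-)ring spectra --- it is the inflation of the composite $BP\to H\Z_{(2)}\to H\F_2$ --- the induced morphism of slice spectral sequences is multiplicative, so on $E_2$-pages it is a homomorphism of bigraded rings. By Proposition~\ref{prop:e2bpgm} the source $E_2$-page is the polynomial algebra $\Z[a_\sigma,u_{2\sigma},\overline{v}_1,\dots,\overline{v}_n,t_1,t_2,\dots]/(2a_\sigma)$, and the target is the $i_*H\F_2$-based $E_2$-page, which is identified in \cite{CHR} as $(\mathcal{A}_*\Box_{\mathcal{A}(0)_*}\F_2)[a_\sigma,x_1,\overline{v}_1,\dots,\overline{v}_n]$, where $\mathcal{A}_*\Box_{\mathcal{A}(0)_*}\F_2=\F_2[\zeta_1^2,\zeta_2,\zeta_3,\dots]$. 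It therefore suffices to determine the image of each generator.

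Three of the four types of generator are handled by naturality. The class $a_\sigma$ is the Hurewicz image of the Euler class of $\sigma$ and so lies, compatibly in both spectral sequences, in the image of the unit map from $\pi^{C_2}_\bigstar\mathbb{S}$; hence $a_\sigma\mapsto a_\sigma$. The classes $\overline{v}_i$ are the slice-$E_2$ incarnations of the generating classes $\overline{r}_i\in\pi^{C_2}_{(2^i-1)\rho}\BPRn$, which live on the common spectrum $\BPRn$ whose slice tower is being smashed (only the coefficient spectrum changes, from $i_*BP$ to $i_*H\F_2$); compatibility of the two edge maps gives $\overline{v}_i\mapsto\overline{v}_i$. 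Finally the $t_i$ arise, as in the proof of Proposition~\ref{prop:e2bpgm}, from the isomorphism $BP_*R\cong R_*\otimes\Z[t_1,t_2,\dots]$ of \cite[4.1.7]{ravgreen} for the complex-orientable spectrum $R=\Slicegr \BPRn^{C_2}$; the map $BP_*R\to (H\F_2)_*R$ induced by $BP\to H\F_2$ is compatible with this structure and is controlled by its effect on the $R=H\Z$ case, that is, by the standard computation $BP_*H\Z=\Z[t_1,t_2,\dots]\to (H\F_2)_*H\Z=\F_2[\zeta_1^2,\zeta_2,\zeta_3,\dots]$, $t_i\mapsto\zeta_i^2$.

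The generator $u_{2\sigma}$ is the crux. By the HHR slice theorem $\Slicegr \BPRn$ is a free $H\underline{\Z}$-module --- a wedge of copies of $S^{k\rho}\wedge H\underline{\Z}$ --- so both $E_2$-pages are free, on the same monomials in the $\overline{v}_i$, over $\pi^{C_2}_\bigstar(i_*BP\wedge H\underline{\Z})$ respectively $\pi^{C_2}_\bigstar(i_*H\F_2\wedge H\underline{\Z})$; it is thus enough to treat $n=0$, namely to compute the image of $u_{2\sigma}\in\pi^{C_2}_\bigstar H\underline{\Z}$ under $\pi^{C_2}_\bigstar(i_*BP\wedge H\underline{\Z})\to\pi^{C_2}_\bigstar(i_*H\F_2\wedge H\underline{\Z})$. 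A degree count, using the description of the target in \cite{CHR}, shows that the graded piece in the degree of $u_{2\sigma}$ is two-dimensional over $\F_2$, spanned by $x_1^2$ and $a_\sigma^2\zeta_1^2$; so the image is $c_1x_1^2+c_2\,a_\sigma^2\zeta_1^2$ with $c_1,c_2\in\F_2$. Restricting to underlying homotopy sends $u_{2\sigma}\mapsto 1$ and $a_\sigma\mapsto 0$, which forces $c_1\Res(x_1)=1$ in $\F_2$, hence $c_1=1$ (and $\Res(x_1)=1$). The value $c_2=1$ is the genuinely equivariant input: after passing to the target --- an $\F_2$-algebra, since $2\colon i_*H\F_2\to i_*H\F_2$ is the inflation of the null map $2\colon H\F_2\to H\F_2$ --- the reduction of $u_{2\sigma}$ is not simply $x_1^2$, the discrepancy being governed by the integral Bockstein relation $\beta(u_\sigma)=a_\sigma$ together with the Hu--Kriz relation $\tau_0^2=a_\sigma\tau_1+u_\sigma\zeta_1$ in the $C_2$-equivariant dual Steenrod algebra, which on passage to $i_*H\F_2\wedge H\underline{\Z}$ --- where the $\tau_i$ vanish --- contributes precisely the term $a_\sigma^2\zeta_1^2$. (Equivalently, this comparison of $\pi^{C_2}_\bigstar(i_*BP\wedge H\underline{\Z})$ with $\pi^{C_2}_\bigstar(i_*H\F_2\wedge H\underline{\Z})$ is already recorded in \cite{CHR}.) Thus $u_{2\sigma}\mapsto x_1^2+a_\sigma^2\zeta_1^2$.

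Assembling the images of the four generators gives the stated map. The main obstacle is this last step: every generator except $u_{2\sigma}$ is determined by naturality, while producing the error term $a_\sigma^2\zeta_1^2$ in the image of $u_{2\sigma}$ needs genuine equivariant information --- the mod $2$ Bockstein behaviour of $u_\sigma$ / the Hu--Kriz relation --- together with the degree bookkeeping guaranteeing that the relevant graded piece of the target has dimension exactly two, so that the two coefficients are pinned down.
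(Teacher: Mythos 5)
Your proposal is correct and follows essentially the same route as the paper: the morphism is determined on the polynomial generators, with $a_\sigma$ and $\overline{v}_i$ handled by naturality of the common slice tower, $t_i\mapsto\zeta_i^2$ coming from the standard behaviour of the $t_i$ under $BP\to H\F_2$, and the crucial formula $u_{2\sigma}\mapsto x_1^2+a_\sigma^2\zeta_1^2$ resting on the comparison of $\pi_\bigstar^{C_2}(i_*BP\otimes H\underline{\Z})$ with $\pi_\bigstar^{C_2}(i_*H\F_2\otimes H\underline{\Z})$ recorded in \cite{CHR} (the paper cites \cite[Corollary 4.4]{CHR} for exactly this). Your supplementary Hu--Kriz/Bockstein sketch for the coefficient of $a_\sigma^2\zeta_1^2$ is a reasonable gloss on why that formula holds but is not needed once the CHR citation is in place, and your dimension count plus restriction argument pinning down the $x_1^2$ term is a slightly more explicit version of the same bookkeeping.
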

\begin{proof}
    The description of the $E_2$-pages follow from Proposition \ref{prop:e2bpgm} and \cite[Corollary 4.5]{CHR}, respectively. The claim for $a_\sigma$ and $\overline{v_i}$ follow from the fact that both spectral sequences are built from the slice tower of $BP_\R\ip{n}$, the claim for $u_{2\sigma}$ follows from \cite[Corollary 4.4]{CHR}, and the claim for $t_i$ follows from the fact that the map $BP_*H\F_2\to {H\F_2}_*H\F_2=\mathcal{A_*}$ sends $t_i\mapsto\zeta_i^2$.
\end{proof}

The following are immediate corollaries and allow us to lift differentials from the $i_*H\F_2$ case to the $i_*BP$ case.

\begin{corollary}\label{cor:injectiveonasigma}
    The morphism of slice spectral sequences for $BP_\R\ip{n}$ induced by $i_*BP\to i_*H\F_2$ is injective on the ideal generated by $a_\sigma$ on $E_2$.
\end{corollary}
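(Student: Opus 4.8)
The plan is to deduce injectivity on the ideal $(a_\sigma)$ directly from the explicit formula for the $E_2$-map given in Proposition \ref{prop:morphismofHSSS}. First I would note that the source $E_2$-page is the polynomial ring $R := \Z[a_\sigma, u_{2\sigma}, \overline{v}_1, \ldots, \overline{v}_n, t_1, t_2, \ldots]$ modulo the single relation $2a_\sigma$; as an abelian group this is a free $\Z$-module on the monomials in $u_{2\sigma}, \overline{v}_i, t_j$ together with an $\F_2$-vector space on those monomials times a positive power of $a_\sigma$. The ideal $(a_\sigma)$ is therefore the $\F_2$-span of monomials $a_\sigma^k \cdot (\text{monomial in } u_{2\sigma}, \overline{v}_i, t_j)$ with $k \ge 1$.

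Next I would record where such a monomial goes under the map $\varphi$ of Proposition \ref{prop:morphismofHSSS}: we have $\varphi(a_\sigma) = a_\sigma$, $\varphi(u_{2\sigma}) = x_1^2 + a_\sigma^2 \zeta_1^2$, $\varphi(\overline{v}_i) = \overline{v}_i$, and $\varphi(t_i) = \zeta_i^2$. The key observation is the following filtration/grading argument: give the target ring a grading (or just a filtration) by the power of $a_\sigma$, and observe that $\varphi(u_{2\sigma})$ has $a_\sigma$-order $0$ with leading term $x_1^2$, while $\varphi(a_\sigma)$ has $a_\sigma$-order exactly $1$. Hence for a monomial $\mu = a_\sigma^k \prod u_{2\sigma}^{b} \prod \overline{v}_i^{c_i} \prod t_j^{d_j}$ with $k \ge 1$, the image $\varphi(\mu)$ has a well-defined lowest $a_\sigma$-order term, namely $a_\sigma^k \cdot x_1^{2b} \cdot \prod \overline{v}_i^{c_i} \prod \zeta_j^{2d_j}$, and distinct monomials $\mu$ with $k \ge 1$ produce distinct such leading terms — here I use that $x_1, \overline{v}_i, \zeta_j$ (and $a_\sigma$) are algebraically independent in the target, which is visible from the presentation $(\mathcal{A}_* \Boxover{\mathcal{A}(0)_*} \F_2)[a_\sigma, x_1, \overline{v}_1, \ldots, \overline{v}_n]$ via \cite[Corollary 4.5]{CHR}. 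A nonzero element of $(a_\sigma)$ is an $\F_2$-linear combination of finitely many such $\mu$; taking the terms of minimal $a_\sigma$-order, their leading terms are distinct and hence cannot cancel, so the image is nonzero. This gives injectivity.

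The main obstacle — really the only point requiring care — is verifying that the leading terms $a_\sigma^k x_1^{2b} \prod \overline{v}_i^{c_i} \prod \zeta_j^{2d_j}$ are genuinely linearly independent over $\F_2$ in the target, i.e. that no unexpected relations among the $\zeta_j$, $x_1$, $\overline{v}_i$ (coming from the Hopf algebroid structure of $\mathcal{A}_* \Boxover{\mathcal{A}(0)_*} \F_2$) collapse them. This is handled by citing the explicit polynomial description of the target $E_2$-page in \cite[Corollary 4.5]{CHR}: the relevant cotensor product is a polynomial ring on the classes $\zeta_j^2$ (or their images), so the needed independence is built into the statement. Once that is in hand the argument is a formal leading-term comparison and no further computation is needed.
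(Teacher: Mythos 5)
Your argument is correct and is essentially the paper's own: the paper treats the corollary as immediate from the explicit $E_2$-formula of Proposition \ref{prop:morphismofHSSS}, and your leading-term comparison in the $a_\sigma$-adic filtration (using that $a_\sigma, x_1, \overline{v}_i$ and the classes $\zeta_j^2$ are algebraically independent in $(\mathcal{A}_*\Boxover{\mathcal{A}(0)_*}\F_2)[a_\sigma,x_1,\overline{v}_1,\ldots,\overline{v}_n]$) just spells out why that is immediate. The only tiny imprecision is that the cotensor factor is $\F_2[\zeta_1^2,\zeta_2,\zeta_3,\ldots]$ rather than a polynomial ring on the $\zeta_j^2$ themselves, but the needed algebraic independence of the $\zeta_j^2$ follows at once (e.g.\ by the Frobenius), so the proof stands.
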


\begin{corollary}\label{cor:t_isupportsdiff}
    In the $i_*BP$-based slice spectral sequence for $BP_\R\ip{n}$, for $i\le n$ and $j<i+n-1$, the class $t_i^{2^j}$ supports a nontrivial $d_r$ for some $r\le 2^{i+j+1}-1$.
\end{corollary}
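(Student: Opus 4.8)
The plan is to transport the statement along the morphism of slice spectral sequences of Proposition~\ref{prop:morphismofHSSS}, which sends $t_i$ to $\zeta_i^2$ and hence $t_i^{2^j}$ to $\zeta_i^{2^{j+1}}$. Two ingredients are needed: a formal one, that $t_i^{2^j}$ can never be the target of a differential, so that it is a permanent cycle precisely when it supports none; and the substantive input, that $\zeta_i^{2^{j+1}}$ is \emph{not} a permanent cycle in the $i_*H\F_2$-based slice spectral sequence of $BP_\R\ip{n}$ when $i\le n$ and $j<i+n-1$.

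For the formal ingredient: by Proposition~\ref{prop:e2bpgm} the classes $t_i$ lie on the zero line (slice filtration $0$) of the $i_*BP$-based spectral sequence, and since differentials strictly raise the nonnegative slice filtration, neither $t_i^{2^j}$ nor any of its images on later pages is ever a boundary; the same holds for $\zeta_i^{2^{j+1}}$ on the zero line of the $i_*H\F_2$-based spectral sequence. The length bound $r\le 2^{i+j+1}-1$ in the conclusion is then automatic: once $t_i^{2^j}$ supports \emph{some} differential, the upper vanishing line $s=(|G|-1)(t-s)$ of Proposition~\ref{prop:ssvanishinglines} (the line $s=t-s$ for $G=C_2$) bounds the length of a differential out of filtration $0$ by one less than the stem, and the stem of $t_i^{2^j}$ is at most $2^{i+j+1}$.

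The input on the $H\F_2$-side is where the work lies. For $i=1$ it is essentially the mod~$2$ reduction of the slice differentials of \cite{HHR}: by Proposition~\ref{prop:morphismofHSSS} the differential $d_{2^{k+1}-1}(u_{2\sigma}^{2^{k-1}})=a_\sigma^{2^{k+1}-1}\overline{v}_k$, valid for $1\le k\le n$, maps to a differential out of $x_1^{2^k}+a_\sigma^{2^k}\zeta_1^{2^k}$; since $d_r$ annihilates $2^k$-th powers in characteristic~$2$ the term $x_1^{2^k}$ drops out, and taking $k=j+1$ and cancelling the power of $a_\sigma$ — legitimate on the polynomial $E_2$-page, and propagated using Corollary~\ref{cor:injectiveonasigma} — leaves a nontrivial $d_{2^{j+2}-1}$ on $\zeta_1^{2^{j+1}}$, with $2^{j+2}-1=2^{i+j+1}-1$. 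For $i\ge 2$ the relevant differentials on $\zeta_i^{2^{j+1}}$ are not forced by the slice differentials alone, and here I would quote the analysis of the $i_*H\F_2$-based slice spectral sequence of $BP_\R\ip{n}$ from \cite{CHR}; equivalently, that $\zeta_i^{2^{j+1}}$ fails to lie in the image of the edge map $H_*(BP_\R\ip{n}^{C_2};\F_2)\to H_*(H\underline{\Z}^{C_2};\F_2)=\F_2[\zeta_1^2,\zeta_2,\zeta_3,\dots]$ into the degree-zero slice for $j$ in the stated range.

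Granting this, the deduction is formal. Suppose $t_i^{2^j}$ supported no nontrivial $d_r$ with $r\le 2^{i+j+1}-1$. Since it is never a boundary it would then survive to the $E_{2^{i+j+1}}$-page; because the morphism of Proposition~\ref{prop:morphismofHSSS} commutes with every differential and takes $t_i^{2^j}$ to $\zeta_i^{2^{j+1}}$, the class $\zeta_i^{2^{j+1}}$ would likewise survive to the $E_{2^{i+j+1}}$-page of the $i_*H\F_2$-based spectral sequence with all of its differentials $d_r$, $r\le 2^{i+j+1}-1$, vanishing — contradicting the previous paragraph. Hence $t_i^{2^j}$ supports a nontrivial $d_r$ with $r\le 2^{i+j+1}-1$. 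The main obstacle is thus the $H\F_2$-side input for $i\ge 2$, which draws on the computations of \cite{CHR}; the rest — the zero-line observation, the vanishing-line bound, and the naturality chase — is routine.
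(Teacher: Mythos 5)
Your proof is essentially the paper's: the argument given there is exactly naturality for the morphism of Proposition \ref{prop:morphismofHSSS} (which sends $t_i^{2^j}\mapsto\zeta_i^{2^{j+1}}$) combined with the nontrivial differentials on these classes in the $i_*H\F_2$-based slice spectral sequence supplied by \cite[Theorem 1.7]{CHR}, and your closing naturality chase is the intended deduction. Two small corrections. First, $t_i$ does not lie in slice filtration $0$: by Proposition \ref{prop:e2bpgm} its bidegree is $(2^i-1,1-2^i)$, so $t_i^{2^j}$ sits in negative filtration on the lower vanishing line $s=-(t-s)$ of Proposition \ref{prop:ssvanishinglines}; the conclusion you need (it is never the target of a differential) is still correct, but because any potential source would lie below that vanishing line, not because of nonnegativity of the filtration. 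Second, your separate hand-derivation of the $i=1$ input from the HHR slice differentials is unnecessary, since \cite[Theorem 1.7]{CHR} already covers $i=1$, and it is also the least solid step of your writeup: dividing a differential by a power of $a_\sigma$ requires knowing the relevant later page is $a_\sigma$-torsion-free in the target bidegree (Corollary \ref{cor:injectiveonasigma} concerns injectivity of the comparison map on the ideal $(a_\sigma)$, which is not the same statement), so simply quoting the CHR theorem for all $i\le n$, $j<i+n-1$, as the paper does, is both shorter and safer.
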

\begin{proof}
    This follows by naturality, Proposition \ref{prop:morphismofHSSS}, and \cite[Theorem 1.7]{CHR}.
\end{proof}

\section{Odd classes in $BP_*BP^{((G))}\langle m\rangle^G$ for $|G|>2$}\label{sec:3}

To identify a nonzero class in odd degrees in $BP_*BP^{((G))}\langle m\rangle^G$, we will need a few facts about the slice spectral sequence of $BP^{((G))}\langle m\rangle$ in stems $t-s\le 3$. These can be deduced from \cite[Section 5]{eta3}, so we give here a brief explanation.

\begin{lemma}\label{lemma:lowslicebpgm}
    The following hold in the slice spectral sequence of $BP^{((G))}\langle m\rangle$ whenever $|G|>2$ and $m>0$:
    \begin{enumerate}
        \item The $E_2$-page vanishes above the line $s=(|G|-1)(t-s)$ and whenever $t$ is odd.
        \item The bidegree $(t-s,s)$ is generated by transfers from $C_2$ whenever $t=6$.
        \item In bidegree $(t-s,s)=(1,1)$ there is a class $\eta:=tr_{2}^{|G|}(a_{\sigma_2}\overline{r}_1^G)$ that detects $\eta\in\pi_1\mathbb{S}$.
        \item In bidegree $(t-s,s)=(3,1)$ there is a class 
        \[\eta_{C_4}:=\begin{cases}a_{\overline{\rho}_4}N_{2}^4(\overline{r}_1^G)&|G|=4\\tr_{4}^{|G|}(a_{\overline{\rho}_4}N_{2}^4(\overline{r}_1^G))&|G|>4
        \end{cases}\]
        whose square is nonzero on $E_2$.
    \end{enumerate}
\end{lemma}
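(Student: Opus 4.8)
The plan is to derive all four statements from the explicit form of the slice associated graded of $BP^{((G))}\langle m\rangle$ together with the known low-stem computations in the slice spectral sequence for these theories. Recall from \cite{HHR} that the slices of $BP^{((G))}\langle m\rangle$ are suspensions of $H\underline{\Z}$ by even regular representations, indexed by the monomials in $N_{C_2}^G(\bar r_1),\dots,N_{C_2}^G(\bar r_m)$ and their conjugates; concretely the slice associated graded is $H\underline{\Z}\otimes\mathbb{Z}[G\cdot \bar r_1,\dots,G\cdot\bar r_m]$ suitably interpreted, with $\bar r_i$ in degree $\rho_{C_2}(2^i-1)$. Applying $\pi^G_\bigstar$ and restricting to integer degrees, the $E_2$-page $A_m$ is a sum of copies of $H^{\bigstar}_{C_2}(pt)$-type pieces shifted by these representation spheres, induced up from subgroups. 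This is precisely the content organized in \cite[Section 5]{eta3}, and I would cite that directly rather than redo the bookkeeping.

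For (1): the upper vanishing line $s=(|G|-1)(t-s)$ is part of Proposition \ref{prop:ssvanishinglines}, so only the vanishing for $t$ odd needs comment. Every slice cell contributes a summand of the form $\pi^G_\bigstar$ of an induced $H\underline{\Z}\wedge S^{V}$ with $V$ an \emph{even}-dimensional actual representation, so the bidegrees $(t-s,s)$ that appear always have $t=\dim V^G + (\text{stuff from }a\text{'s and }u\text{'s})$ even; spelling this out, every generator of $A_m$ has $t$ even. Hence $E_2^{s,t}=0$ for $t$ odd, and this persists to all pages since it is already true on $E_2$.

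For (2) and (3): in stem $t-s\le 3$ the only slice cells that can contribute are those coming from the bottom few classes $\bar r_1^G=N_{C_2}^G(\bar r_1)$ and its conjugates/transfers, together with powers of $a_\sigma$ and $u_{2\sigma}$. For $t=6$ one checks that the relevant cells are all induced from the index-$(|G|/2)$ subgroup $C_2$ (this is where $|G|>2$ is used — there is no room for a non-induced class of this filtration in this stem), giving (2); this also matches the appearance of $\eta=tr_2^{|G|}(a_{\sigma_2}\bar r_1^G)$ in bidegree $(1,1)$, and the fact that this class detects the Hopf map $\eta$ is exactly \cite{HHR} / \cite{eta3} (it is the image of the corresponding class in $BP_\R\langle m\rangle^{C_2}$ under the transfer, which detects $\eta$ there). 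For (4): in bidegree $(3,1)$ the class $a_{\bar\rho_4}N_2^4(\bar r_1^G)$ (or its transfer when $|G|>4$) is the evident nonzero class on $E_2$ coming from the $C_4$-slice data, and the nonvanishing of its square on $E_2$ is a direct computation in $\pi^G_\bigstar$ of the relevant wedge of induced $H\underline{\Z}$-modules — the product lands in bidegree $(6,2)$, which by (2) is generated by transfers, and one checks the square is the nonzero transfer class there. I would again lean on \cite[Section 5]{eta3} for the precise identification.

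The main obstacle is (4): while everything else is essentially a matter of reading off which induced cells occur in a given low bidegree, the claim that $\eta_{C_4}^2\ne 0$ on $E_2$ requires genuinely computing a product in the equivariant homotopy of the slice associated graded — one must identify the target bidegree $(6,2)$, use (2) to know it is a group of transfers, and verify the multiplication $a_{\bar\rho_4}N_2^4(\bar r_1^G)\cdot a_{\bar\rho_4}N_2^4(\bar r_1^G)$ hits a nonzero element rather than $0$. This is where the Frobenius/transfer relations in the relevant Green functor must be invoked carefully, and where the argument of \cite{eta3} does the real work.
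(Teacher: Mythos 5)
Your overall strategy (slice theorem for the associated graded, low-degree analysis of which slice cells occur, and deferral to \cite[Section 5]{eta3} for the explicit identifications) matches the paper's, and your treatment of (1) and (2) is essentially the argument given there: the vanishing line is quoted from HHR, odd vanishing comes from evenness of the slices, and for the six-slice one checks that every degree-$6$ generator of $\pi_6^u BP^{((G))}\langle m\rangle$ has stabilizer exactly $C_2$, so every summand of $P^6_6$ is induced from $C_2$ (your ``no room for a non-induced class'' phrasing is vaguer than this stabilizer count, but it is the right idea).

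There are, however, two genuine problems in your supporting reasoning. First, you misread the indexing in item (2): the condition ``$t=6$'' refers to the slice degree $t$ (i.e.\ contributions of the six-slice, such as bidegree $(t-s,s)=(3,3)$, which is exactly where the paper later uses (2)), not to the stem $t-s=6$. Consequently your argument for (4) breaks: $\eta_{C_4}^2$ lies in bidegree $(6,2)$, which has slice degree $t=8$, so item (2) says nothing about it, and in fact that group is \emph{not} generated by transfers from $C_2$ --- for $|G|=4$ the square $a_{\overline{\rho}_4}^2N_2^4(\overline{r}_1)^2$ lives in the non-induced summand of the eight-slice indexed by $N_2^4(\overline{r}_1)^2$, and its nonvanishing is a direct computation with Euler classes in $\pi_\bigstar^{C_4}H\underline{\Z}$ (and, for $|G|>4$, a Frobenius computation for $tr_4^{|G|}(x)^2=tr_4^{|G|}(x\cdot res\, tr(x))$, which is what \cite[Proposition 5.7]{eta3} supplies); it is not ``the nonzero transfer class'' in a transfer-generated group. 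Second, your justification of (3) --- that the class detects $\eta$ because it is the transfer of a class detecting $\eta$ in the $C_2$-spectral sequence --- does not follow: $tr_2^{|G|}(\eta|_{C_2})$ is not the Hurewicz image of $\eta$ in $\pi_1(BP^{((G))}\langle m\rangle^G)$ in general. The paper's argument is different: by \cite[Theorem 1.2]{LSWX} the class $\eta$ has slice filtration exactly $1$ in $BP^{((G))}\langle m\rangle^{C_2}$; since bidegree $(1,0)$ vanishes in the $G$-spectral sequence by (1), naturality of the spectral sequence along the restriction forces $\eta$ to be detected in bidegree $(1,1)$, which happens to be generated by the stated transfer class. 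Since you ultimately defer both (3) and (4) to the literature the conclusions stand, but the intermediate reasoning you offer for them is not correct as written and should be replaced by the naturality argument and the Frobenius/Euler-class computation above.
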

\begin{proof}
    For claim (1), the vanishing line is \cite[Proposition 4.40]{HHR}, and the odd vanishing is the odd vanishing in the HHR slice theorem. Claim (2) follows from the slice theorem: each generator of $\pi_6^uBP^{((G))}\langle m\rangle$ has stabilizer $C_2$, hence each summand of $P^6_6$ is induced from $C_2$. 
    
    For claim $(3)$, it follows from \cite[Theorem 1.2]{LSWX} that $BP^{((G))}\langle m\rangle^{C_2}$ detects $\eta$ in slice filtration 1. Since the restriction map $BP^{((G))}\langle m\rangle^{G}\to BP^{((G))}\langle m\rangle^{C_2}$ factors the Hurewicz map of $BP^{((G))}\langle m\rangle^{G}\to BP^{((G))}\langle m\rangle^{C_2}$, and bidegree $(1,0)$ vanishes in the slice spectral sequence for $BP^{((G))}\langle m\rangle$ by claim (1), naturality of the slice spectral sequence implies that $\eta$ must be detected in the slice spectral sequence for $BP^{((G))}\langle m\rangle$ in bidegree $(1,1)$, which is generated by the claimed transfer. Claim (4) follows by analyzing the slices in low degrees and using the Frobenius formula for products of transfers in a Green functor (see \cite[Proposition 5.7]{eta3}).
\end{proof}

\begin{theorem}
    For $m>0$, the class $\eta_{C_4}\in\pi_1(BP^{((G))}\langle m\rangle^G)$ is not in the kernel of the map
    \[\pi_*(BP^{((G))}\langle m\rangle^G)\to BP_*(BP^{((G))}\langle m\rangle^G)\]
    In particular, the $BP$-homology of $BP^{((G))}\langle m\rangle^G$ has a $2$-torsion class in odd degrees that is in the kernel of the restriction map.
\end{theorem}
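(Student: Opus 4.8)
The plan is to run a comparison of two spectral sequences. Smashing the slice tower of $BP^{((G))}\langle m\rangle$ with the unit map $\mathbb S\to i_*BP$ produces a morphism $\phi$ from the ordinary slice spectral sequence, which abuts to $\pi_*BP^{((G))}\langle m\rangle^G$, to the $i_*BP$-based slice spectral sequence of Proposition~\ref{prop:ssvanishinglines}, which abuts to $BP_*BP^{((G))}\langle m\rangle^G$. On $E_2$-pages Proposition~\ref{prop:e2bpgm} identifies $\phi$ with the split ring monomorphism $A_m\hookrightarrow A_m\otimes\Z[t_1,t_2,\dots]$, $a\mapsto a\otimes 1$; on abutments $\phi$ is the Hurewicz map $h$. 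By Lemma~\ref{lemma:lowslicebpgm}(4), $\eta_{C_4}$ is detected in the ordinary slice spectral sequence by a permanent cycle $x$ with $x^2\neq 0$ on $E_2$. Pushing forward, $x\otimes 1$ is again a permanent cycle, and it is nonzero on the $i_*BP$-based $E_2$-page since $\phi$ is injective there. So if $x\otimes 1$ is not a boundary it detects a nonzero element of $BP_*BP^{((G))}\langle m\rangle^G$, which by naturality of $\phi$ on abutments is $h(\eta_{C_4})$ modulo higher slice filtration; hence $h(\eta_{C_4})\neq 0$, which is the first assertion.

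Granting this, the rest is formal. The element $\eta_{C_4}$ lies in an odd stem and satisfies $2\eta_{C_4}=0$ by \cite{HHR}, so $h(\eta_{C_4})$ is a nonzero $2$-torsion class in an odd degree. Moreover $\Res^{C_4}_{C_2}\bar\rho_4$ contains a trivial summand, so its Euler class $\Res^{C_4}_{C_2}a_{\bar\rho_4}$ vanishes and hence $\Res^{C_4}_{C_2}\eta_{C_4}=0$; when $|G|>4$ the double coset formula expresses $\Res^G_{C_2}\eta_{C_4}$ as a sum of $G$-translates of $\Res^{C_4}_{C_2}\!\big(a_{\bar\rho_4}N_2^4(\bar r_1^G)\big)=0$. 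Naturality of the Hurewicz map then puts $h(\eta_{C_4})$ in the kernel of restriction to $BP^{((G))}\langle m\rangle^{C_2}$.

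It remains to show $x\otimes 1$ is not a boundary. One input is that $x$ itself is not a boundary in the \emph{ordinary} slice spectral sequence, since it detects the nonzero class $\eta_{C_4}$ of \cite{HHR}; by naturality of $\phi$, any $d_r$ applied to an element of the image of $\phi$ again lands in that image and equals the image of $d_r$ of its preimage, so it can never equal $x\otimes 1$. Thus the only danger is a differential out of a class that genuinely involves the variables $t_i$. Here I would analyze the $i_*BP$-based $E_2$-page in the few relevant low stems using the vanishing lines of Proposition~\ref{prop:ssvanishinglines}, the odd-degree vanishing and the transfer description of Lemma~\ref{lemma:lowslicebpgm}(1),(2),(3), and the behavior of the $t_i$ recorded in Corollary~\ref{cor:t_isupportsdiff} (whose proof via Proposition~\ref{prop:morphismofHSSS} is insensitive to $G$): the bidegrees from which a $d_r$ could reach $x\otimes 1$ should contain only $t_i$-multiples of transfers from $C_2$ together with a handful of bottom-edge $t_i$-monomials, and I expect these to be shown --- via the Leibniz rule, Fr\"obenius reciprocity, and where needed the relation $x^2\neq0$ --- to have already died on earlier pages, so that nothing is left to hit $x\otimes 1$. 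Making this last bookkeeping precise is the step I expect to be the main obstacle; everything else is naturality and the low-stem input already assembled in Lemma~\ref{lemma:lowslicebpgm}.
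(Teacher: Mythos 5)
Your setup---the comparison of the ordinary and $i_*BP$-based slice spectral sequences, the permanence of the detecting class, and the restriction/double-coset argument for the final sentence---matches the paper's framework, but the proof has a genuine gap exactly at the step you defer: showing that $\eta_{C_4}$ is not a boundary in the $i_*BP$-based spectral sequence. By the vanishing lines and Lemma \ref{lemma:lowslicebpgm}(1), the only differential that could hit bidegree $(t-s,s)=(1,3)$ is a $d_5$ whose source is bidegree $(2,-2)$, generated on $E_2$ by $t_1$---a class genuinely involving the $t_i$'s, so naturality from the ordinary slice spectral sequence says nothing about it. Moreover, your expectation that such sources ``have already died on earlier pages'' is not what happens: $t_1$ does support $d_3(t_1)=\eta$ (forced, since $\eta=0$ in $BP_*$ while $\eta$ is detected in bidegree $(1,1)$ by Lemma \ref{lemma:lowslicebpgm}(3)), but this kills only $t_1$, not $2t_1$; the class $[2t_1]$ survives to $E_5$ and a priori supports $d_5([2t_1])=\eta_{C_4}$.

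Excluding this last differential is the actual content, and it is not routine bookkeeping. The paper's argument is multiplicative: since $a_{\overline{\rho}_4}$, hence $\eta_{C_4}$, is $2$-torsion, one has $[2t_1]\cdot\eta_{C_4}=0$, so $d_5([2t_1])=\eta_{C_4}$ would force $0=d_5([2t_1]\cdot\eta_{C_4})=(\eta_{C_4})^2$ on $E_5$; one must therefore know that $(\eta_{C_4})^2$ is still nonzero on $E_5$. That in turn uses Lemma \ref{lemma:lowslicebpgm}(4) (nonvanishing of $(\eta_{C_4})^2$ on $E_2$) together with the observation that the only differential that could kill it before $E_5$ is a $d_3$ out of bidegree $(3,3)$, which consists of transfers from $C_2$ (Lemma \ref{lemma:lowslicebpgm}(2)) and hence of permanent cycles by the $C_2$ vanishing line $s=t-s$. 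You do gesture at ``where needed the relation $x^2\neq 0$,'' which is the right ingredient, but without the forced $d_3(t_1)=\eta$, the identification of $[2t_1]$ as the one surviving potential source, and the persistence of $(\eta_{C_4})^2$ to $E_5$, the first (and central) assertion of the theorem is not established; the remainder of your write-up (the $2$-torsion statement and the vanishing of the restriction of $\eta_{C_4}$) is fine.
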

\begin{proof}
    We will show that in the $i_*BP$-based slice spectral sequence of Proposition \ref{prop:e2bpgm}, the permanent cycle $\eta_{C_4}$ cannot be hit by a differential. For degree reasons, the only possible differential is a $d_5$ with source in bidegree $(2,-2)$, which is generated on $E_2$ by $t_1$. We illustrate the situation with $BP^{((C_4))}\langle 1 \rangle$ in Figure \ref{bpc41}.

    In fact, since $\eta=0\in BP_*$, by item (3) of Lemma \ref{lemma:lowslicebpgm}, the class $\eta$ in bidegree $(1,1)$ must be hit by a differential. The only possibility is $d_3(t_1)=\eta$, hence the bidegree $(2,-2)$ on $E_5$ is generated by $[2t_1]$. We claim also that $(\eta_{C_4})^2$ remains nonzero on $E_5$. This finishes the proof as the last possible differential killing $\eta_{C_4}$ is $d_5([2t_1])=\eta_{C_4}$, but since $[2t_1]\cdot\eta_{C_4}=0$ as $\eta_{C_4}$ is $2$-torsion (because $a_{\overline{\rho}_4}$ is $2$-torsion), this would imply $0=d_5([2t_1]\cdot\eta_{C_4})=(\eta_{C_4})^2$, a contradiction.

    If $(\eta_{C_4})^2$ were $0$ on $E_5$, by item (4) of the lemma, it must the target of a differential of length $<5$, which must be a $d_3$ by evenness. The source of such a differential is in bidegree $(3,3)$, which consists of transfers from $C_2$ by item (2) of the lemma, all of which are permanent cycles in view of the vanishing line $t-s=s$ in the $C_2$-slice spectral sequence.
\end{proof}

\begin{sseqdata}[ name = bpc41, Adams grading, classes = {fill, show name=below},
grid = go, xrange ={0}{3},yrange={-3}{6},xscale=1,yscale=.5,x tick step =2, y tick step =2,run off differentials = {->},struct lines = black ]
\class[name = 1,rectangle](0,0)
\class[name = \eta,red](1,1)
\DoUntilOutOfBoundsThenNMore{2}{
    \class[red](\lastx+1,\lasty+1)
    \structline
}
\structline(0,0)(1,1)

\class[name = b_1,rectangle](2,-2)
\DoUntilOutOfBoundsThenNMore{3}{
    \d3
    \class[red](\lastx+1,\lasty+1)
    \structline
}

\class[name = \nu](3,1)
\class[name=\eta_{C_4}](1,3)
\class(2,6)
\structline(0,0)(1,3)
\structline(1,3)(2,6)
\class[name=2\nu,red](3,3)
\structline[dashed](3,1)(3,3,2)
\class(3,1)
\class(4,4)
\structline(2,-2)(3,1,2)
\structline(3,1,2)(4,4)

\end{sseqdata}
\begin{figure}[!htbp]
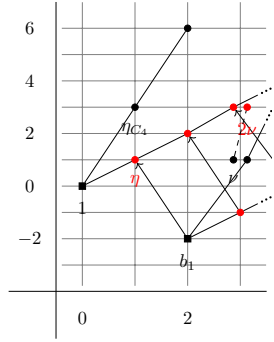

\adjustbox{scale=0.7,center}{
\printpage[name = bpc41]
}
\caption{The $i_*BP$-based HSSS for $BP^{((C_4))}\langle 1\rangle$ with transfers in red.}
\label{bpc41}
\end{figure}

\begin{remark}
    In fact, a more careful but analogous argument shows that all elements in $\pi_1BP^{((G))}\langle m\rangle^G$ of slice filtration $>1$ (all of which are of the form $\eta_H$ for $H\subset G$) survive the map to $BP$-homology.
\end{remark}

\section{The $BP$-homology of $BP_\R\langle n\rangle^{C_2}$}\label{sec:4}
We run the $i_*BP$-based slice spectral sequence for $BP_\R\langle n\rangle$. For $n=1$ and $2$, this behaves as expected and becomes torsion free and even on the $E_4$ and $E_8$ pages respectively. For $n\ge 3$, this pattern is interrupted by a class that appears in the $9$ stem in filtration $5$.

\subsection{The $E_3$ page and $BP_*ko$}
Our first differentials appear on the $E_3$-page where we see the differential for the ordinary slice spectral sequence of $BP_\R\ip{n}$ as well as the $d_3$ of Corollary \ref{cor:t_isupportsdiff}.

\begin{proposition}\label{prop:d3}
    In the $i_*BP$-based slice spectral sequence for $BP_\R\ip{n}$ for $n\ge1$, up to use of the Leibniz rule, the only nonzero $d_3$ differentials are 
    \begin{align*}
        d_3(u_{2\sigma})&=a_\sigma^3\overline{v}_1&d_3(b_1)&=a_\sigma\overline{v}_1.
    \end{align*}
\end{proposition}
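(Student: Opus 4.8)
The plan is to identify the $d_3$ differentials on the $E_3$-page as coming from two sources: the ordinary (integral) slice spectral sequence of $BP_\R\langle n\rangle$, which contributes $d_3(u_{2\sigma}) = a_\sigma^3 \overline{v}_1$, and Corollary \ref{cor:t_isupportsdiff}, which forces a nonzero $d_3$ on $t_1 = b_1$. First I would recall from Proposition \ref{prop:e2bpgm} that the $E_2$-page is $A_n \otimes \Z[t_1, t_2, \ldots]$, where $A_n$ is the $E_2$-page of the ordinary slice spectral sequence of $BP_\R\langle n\rangle$. Since the $t_i$ are permanent cycles in the ordinary slice spectral sequence only up to the differentials of Corollary \ref{cor:t_isupportsdiff}, and since $d_3$ is a derivation, it suffices to pin down $d_3$ on the algebra generators $a_\sigma$, $u_{2\sigma}$, $\overline{v}_1, \ldots, \overline{v}_n$, and $t_1, t_2, \ldots$.

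The generators $a_\sigma$ and the $\overline{v}_i$ are permanent cycles: they lie in the image of the unit/structure maps and are detected on filtration lines where no differentials can originate, and $\overline{v}_i$ supports its first nontrivial differential much later (these are the ordinary slice differentials $d_{2^{i+1}-1}$). The differential $d_3(u_{2\sigma}) = a_\sigma^3 \overline{v}_1$ is the first ordinary slice differential for $BP_\R\langle n\rangle$ (for $n \geq 1$), transported to the $i_*BP$-based spectral sequence via the morphism of spectral sequences induced by the unit; this is exactly the $C_2$-version of the standard HHR slice differential $d_3(u_{2\sigma}) = a_\sigma^3 \overline{v}_1$ in $BP_\R\langle n\rangle$. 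For $b_1 = t_1$, Corollary \ref{cor:t_isupportsdiff} (applied with $i=1$, $j=0$, which requires $0 < n$, i.e.\ $n \geq 1$) guarantees $t_1$ supports a nontrivial $d_r$ with $r \leq 3$; since the $E_2$-page vanishes in the relevant intermediate bidegree by the vanishing lines of Proposition \ref{prop:ssvanishinglines}, the only option is $r = 3$, and degree considerations ($|t_1| = (1,-1)$, so the target of a $d_3$ lies in bidegree $(0,2)$, i.e.\ $(t-s,s) = (0,2)$) force $d_3(b_1) = a_\sigma \overline{v}_1$ up to a unit, matching the target bidegree where $a_\sigma \overline{v}_1$ lives.

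Having established $d_3$ on all algebra generators, I would then argue that these are \emph{all} the nonzero $d_3$ differentials up to the Leibniz rule: everything on $E_3$ is a polynomial in $a_\sigma, u_{2\sigma}, \overline{v}_1, \ldots, \overline{v}_n, t_1, t_2, \ldots$, and $d_3$ of a monomial is determined by the product rule from $d_3$ of the generators, with $d_3(t_i) = 0$ for $i \geq 2$ (their first differentials, via Corollary \ref{cor:t_isupportsdiff}, are longer: for $i = 2$ one gets $r \leq 2^{j+i+1} - 1$ with the smallest being a $d_7$ on $t_2$ when $n \geq 2$). The main obstacle I anticipate is carefully justifying that no \emph{additional} $d_3$ occurs on classes involving $t_2, t_3, \ldots$ or higher-degree polynomial combinations — i.e.\ confirming that the only generators hit in filtration-jump $3$ are $a_\sigma^3\overline{v}_1$ and $a_\sigma\overline{v}_1$, and that $t_i$ for $i \ge 2$ genuinely survive to $E_3$ as permanent cycles through this stage. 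This is handled by the bidegree bookkeeping: a $d_3$ out of bidegree $(t-s,s)$ lands in $(t-s-1, s+3)$, and one checks against the vanishing lines $s = (|G|-1)(t-s) = t-s$ and $s = -(t-s)$ (here $|G| = 2$) that for the relevant generators there is simply nothing in the target bidegree except in the two cases listed. With $d_3(b_1) = a_\sigma\overline{v}_1$ in hand, one also reads off $BP_*ko$ by noting that for $n = 1$ the $E_4 = E_\infty$-page collapses to a torsion-free, even-concentrated associated graded, which I would record in the subsequent discussion.
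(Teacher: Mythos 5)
Your identification of the two differentials themselves is essentially the paper's: $d_3(u_{2\sigma})=a_\sigma^3\overline{v}_1$ follows by naturality from the HHR slice differentials theorem, and $d_3(b_1)=a_\sigma\overline{v}_1$ follows from Corollary \ref{cor:t_isupportsdiff} with $i=1$, $j=0$ together with the fact that $a_\sigma\overline{v}_1$ is the only class in the target bidegree. The gap is in your last step, where you claim that $d_3(t_i)=0$ for $i\ge 2$ is ``handled by the bidegree bookkeeping'' because ``there is simply nothing in the target bidegree.'' That is not true: the target bidegrees are occupied. For instance, $t_2$ lies in the stem-$6$ part of $BP_*$ of the zero slice, and the target bidegree of a $d_3$ on it is the stem-$5$ part of $BP_\bigstar$ of the $2$-slice, which contains the nonzero $2$-torsion class $a_\sigma\overline{v}_1t_1^2$; analogous $a_\sigma$-divisible classes occupy the target bidegrees for all higher $t_i$. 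So degree and vanishing-line considerations alone cannot rule out $d_3(t_i)\ne 0$ for $i\ge 2$, and without that the ``only nonzero $d_3$'s'' statement — which is exactly what Corollary \ref{prop:E4page} needs — is unproved. (Your parenthetical that the first differential on $t_2$ ``is'' a $d_7$ by Corollary \ref{cor:t_isupportsdiff} is also backwards: that corollary only provides an upper bound $r\le 2^{i+j+1}-1$ for some nontrivial differential; it does not exclude shorter ones.)

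The paper closes this gap by comparison with the $i_*H\F_2$-based slice spectral sequence: any candidate value of $d_3(t_i)$ lies in an odd stem, hence every monomial appearing in it has a positive power of $a_\sigma$; by Corollary \ref{cor:injectiveonasigma} the morphism of spectral sequences induced by $i_*BP\to i_*H\F_2$ is injective on the ideal $(a_\sigma)$; and $t_i$ maps to $\zeta_i^2$, which supports no differential for $i>1$ by \cite[Theorem 1.7]{CHR}. Hence $d_3(t_i)=0$ for $i>1$, and the Leibniz rule then gives the stated answer. Your argument needs this (or some substitute for it) to be complete; aside from that, and some shakiness in the bidegree conventions (in the chart of Figure \ref{bpc41} the class $t_1$ sits in $(2,-2)$ with target $(1,1)$, not $(1,-1)$ and $(0,2)$), the remaining steps are fine.
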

\begin{proof}
    The first differential follows by naturality and the slice differentials theorem \cite[Theorem 9.9]{HHR}. The second follows by Corollary \ref{cor:t_isupportsdiff}: $b_1$ must thereby support a nonzero $d_3$, and the claimed target is the only possibility.

    The generators $t_i$ for $i>1$ cannot support a $d_3$ for degree reasons. Any possible differential $d_3(t_i)$ can be written as a sum of monomials of the form $a_\sigma^au_{2\sigma}^bp(\overline{v_i})q(t_i)$,
    where $p$ is a monomial in the $\overline{v_i}$'s and $q$ is a monomial in the $t_i$'s. Each such monomial must live in an odd stem, and therefore must have $a>0$. By Corollary \ref{cor:injectiveonasigma}, however, the morphism of spectral sequences induced by $i_*BP\to i_*H\F_2$ is injective on \(E_3\) when restricted to the ideal generated by $a_\sigma$. Since $t_i$ does not support a differential in the $i_*H\F_2$-based slice spectral sequence for $i>1$ by \cite[Theorem 1.7]{CHR}, the same is true in the $i_*BP$-based slice spectral sequence .
\end{proof}

\begin{corollary}\label{prop:E4page}
    We have
    \[
        E_4=\Z[a_\sigma,w_{2\sigma},m_1,\overline{v}_1,\dots,\overline{v}_n,t_1^2,t_2,\ldots]/(2a_\sigma,a_\sigma\overline{v}_1, a_\sigma m_1,m_1^2-4t_1^2).
    \]
    where $m_1:=[2t_1]$ and \(w_{2\sigma}=u_{2\sigma}-a_\sigma^2 t_1\). 
\end{corollary}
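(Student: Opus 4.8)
The plan is to compute $E_4$ directly from $E_3$ by passing to homology with respect to the $d_3$ differential identified in Proposition~\ref{prop:d3}. Recall from Proposition~\ref{prop:e2bpgm} and Corollary~\ref{cor:injectiveonasigma}'s surrounding discussion that $E_3 = E_2 = \Z[a_\sigma, u_{2\sigma}, \overline{v}_1,\dots,\overline{v}_n, t_1, t_2, \dots]/(2a_\sigma)$, since the first differentials occur on the $E_3$-page. By Proposition~\ref{prop:d3}, the $d_3$ differential is the $\Z[a_\sigma, \overline{v}_2,\dots,\overline{v}_n,t_2,t_3,\dots]$-linear derivation determined by $d_3(u_{2\sigma}) = a_\sigma^3 \overline{v}_1$ and $d_3(t_1) = a_\sigma \overline{v}_1$ (all other generators being $d_3$-cycles), together with the Leibniz rule; note $d_3(\overline{v}_1) = 0$ since $\overline{v}_1$ lives in an even stem and filtration zero.

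First I would reduce to a manageable subring. The classes $a_\sigma, \overline{v}_2,\dots,\overline{v}_n, t_2, t_3,\dots$ are all $d_3$-cycles that are not boundaries, and they appear as polynomial generators; since $d_3$ is linear over the subring they generate, the homology computation factors and it suffices to compute the $d_3$-homology of the three-variable piece $R := (\Z[a_\sigma]/2a_\sigma)[u_{2\sigma}, \overline{v}_1, t_1]$ with $d_3(u_{2\sigma}) = a_\sigma^3\overline{v}_1$, $d_3(t_1) = a_\sigma\overline{v}_1$, $d_3(\overline{v}_1)=d_3(a_\sigma)=0$. The two new generators that should appear in $E_4$, namely $w_{2\sigma} = u_{2\sigma} - a_\sigma^2 t_1$ and $m_1 = [2t_1]$, are visibly $d_3$-cycles: $d_3(w_{2\sigma}) = a_\sigma^3\overline{v}_1 - a_\sigma^2\cdot a_\sigma\overline{v}_1 = 0$, and $d_3(2t_1) = 2a_\sigma\overline{v}_1 = 0$ since $2a_\sigma = 0$. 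So the main task is to show these, together with $t_1^2$ and $a_\sigma$, generate the homology and to identify exactly the relations $a_\sigma\overline{v}_1$, $a_\sigma m_1$, and $m_1^2 = 4t_1^2$.

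The key steps are: (i) change coordinates in $R$, replacing $u_{2\sigma}$ by $w_{2\sigma}$, so that $R = (\Z[a_\sigma]/2a_\sigma)[w_{2\sigma}, \overline{v}_1, t_1]$ and $d_3$ kills $w_{2\sigma}$ and $a_\sigma$, with $d_3(t_1) = a_\sigma\overline{v}_1$; the variable $w_{2\sigma}$ now splits off entirely as a polynomial generator surviving to $E_4$. (ii) Compute the $d_3$-homology of $S := (\Z[a_\sigma]/2a_\sigma)[\overline{v}_1, t_1]$ with $d_3(t_1) = a_\sigma \overline{v}_1$. Decompose $S$ by powers of $t_1$: in each internal degree one is computing the homology of the two-term complex given by multiplication by $a_\sigma\overline{v}_1$ acting on the relevant graded pieces. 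The cycles are generated over $(\Z[a_\sigma]/2a_\sigma)[\overline{v}_1]$ by $1$ and $t_1^2$ — because $d_3(t_1^k) = k a_\sigma\overline{v}_1 t_1^{k-1}$, which vanishes iff $k$ is even (using $2a_\sigma=0$) — plus the class $2t_1 = m_1$, which is a cycle since $2d_3(t_1) = 2a_\sigma\overline{v}_1=0$. (iii) Identify the boundaries: $d_3(t_1) = a_\sigma\overline{v}_1$ shows $a_\sigma\overline{v}_1$ becomes zero, and $d_3(t_1\cdot t_1^{k}) $ for $k$ even shows $a_\sigma\overline{v}_1 t_1^{\text{even}}$ dies; one checks the quotient of cycles by boundaries is exactly $(\Z[a_\sigma]/(2a_\sigma, a_\sigma\overline{v}_1))[\overline{v}_1]\{1, t_1^2, \dots\} \oplus (\text{module generated by }m_1)$ assembling into $\Z[a_\sigma, \overline{v}_1, t_1^2, m_1]/(2a_\sigma, a_\sigma\overline{v}_1, a_\sigma m_1, m_1^2 - 4t_1^2)$, where the relation $m_1^2 = (2t_1)^2 = 4t_1^2$ is forced and $a_\sigma m_1 = 2a_\sigma t_1 = 0$. (iv) Reassemble: tensoring back over the cycle subring with the split-off polynomial generators $w_{2\sigma}, a_\sigma, \overline{v}_2, \dots, \overline{v}_n, t_2, t_3, \dots$ yields the stated formula for $E_4$.

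The main obstacle I anticipate is step (ii)–(iii): carefully bookkeeping the homology of $S$, since $a_\sigma$-torsion interacts subtly with the $\overline{v}_1$-multiplication in $d_3$ and with the integral (not mod $2$) structure in the $t_1$-direction — in particular confirming that $m_1 = [2t_1]$ is a genuinely new generator not expressible in terms of $a_\sigma, \overline{v}_1, t_1^2$, and that the only relations are the four listed. One clean way to control this is to note that $S$, as a module over $\Z[\overline{v}_1, t_1]$, is $\Z[\overline{v}_1,t_1] \oplus (\F_2[a_\sigma]\cdot a_\sigma)[\overline{v}_1, t_1]$ (the free part and the $a_\sigma$-divisible part), and $d_3$ vanishes on the free summand except via the image of $d_3(t_1) = a_\sigma\overline{v}_1$ landing in the torsion summand; running the homology on each summand and splicing gives the answer, and the relation $m_1^2 - 4t_1^2 = 0$ is exactly the statement that the free-part class $4t_1^2$ equals the square of the element $2t_1$ that becomes indecomposable after quotienting.
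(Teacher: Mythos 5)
Your proposal is correct and follows the same route the paper intends (the corollary is stated without proof as the direct computation of $d_3$-homology of $E_3=E_2$ using Proposition \ref{prop:d3}): change variables to the cycle $w_{2\sigma}=u_{2\sigma}-a_\sigma^2t_1$, split off the polynomial cycle generators, and compute the homology of $\Z[a_\sigma,\overline{v}_1,t_1]/(2a_\sigma)$ with $d_3(t_1)=a_\sigma\overline{v}_1$, which yields exactly the stated generators and relations. The only blemishes are cosmetic: in your reassembly step $a_\sigma$ should not be listed again among the split-off generators since it already lives in your subring $S$, and the intermediate expression for the quotient of cycles by boundaries should be written as $\Z[a_\sigma,\overline{v}_1]/(2a_\sigma,a_\sigma\overline{v}_1)$ on the even part.
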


Setting $n=1$ and $\Delta:=w_{2\sigma}\overline{v_1}^2$, we see that in integer degrees
\[E_4=\Z[\Delta,m_1,t_1^2,t_2,\ldots]/(m_1^2=4t_1^2)\]
which is already concentrated in even stems, and therefore collapses. There is, however, a nontrivial multiplicative extension, which may be resolved using the Mackey structure. Indeed, since $a_\sigma$ does not contribute to the above $E_4$-page, the restriction map $BP_*ko\to BP_*ku$ is an injection as it induces an injection on slice associated graded. Using the $C_2$ action on the underlying $BP$-homology, one can deduce the following. Since it is a well-known computation, we omit the details.

\begin{proposition}\label{prop:T(m)ofko}
    There is a lift $c$ of $t_1\gamma(t_1)$ along the restriction map $BP_*ko\to BP_*ku$. The resulting ring map 
    \[f:\Z[b,c,t_2,\ldots]\to BP_*ko\]
    sending $b\mapsto m_1$ and $c\mapsto c$
    is an isomorphism. It sends $b^2-4c$ to $\Delta$.
\end{proposition}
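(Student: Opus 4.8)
The plan is to use the injectivity of $\res\colon BP_*ko\to BP_*ku$ together with an explicit description of $BP_*ku$ as a ring with its complex‑conjugation action. Since we work $2$‑locally, $ku\simeq BP\langle 1\rangle$, so $BP_*ku=BP_*BP\langle 1\rangle\cong\Z_{(2)}[v_1,t_1,t_2,\dots]$ is polynomial, with $v_1=\eta_L(v_1)$ and $|t_i|=2(2^i-1)$. The Weyl action $\gamma$ on $BP_*ku$ comes from the $C_2$‑action on $i_*BP\otimes BP_\R\langle 1\rangle$, which is trivial on $i_*BP$ and complex conjugation on $BP_\R\langle 1\rangle$; thus $\gamma$ fixes $\eta_L(v_1)$ and negates the Hurewicz image $\eta_R(v_1)=\eta_L(v_1)+2t_1$ of the Bott class. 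Hence $\gamma(t_1)=-\eta_L(v_1)-t_1$, so the norm $N(t_1):=t_1\gamma(t_1)=-\eta_L(v_1)t_1-t_1^2$ is $\gamma$‑invariant, and a direct calculation shows $(BP_*ku)^{C_2}$ is again polynomial — for the variables $v_1,t_1$ this is the identity $\Z_{(2)}[v_1,t_1]^{C_2}=\Z_{(2)}[v_1,N(t_1)]$, and in general one adjoins $\gamma$‑invariant lifts $\tau_2,\tau_3,\dots$ of $t_2,t_3,\dots$.

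Since $\res$ lands in $(BP_*ku)^{C_2}$ and, by the collapse of the spectral sequence and the absence of $a_\sigma$‑torsion in integer degrees, should in fact induce an isomorphism of $BP_*ko$ onto these invariants (the morphism of slice spectral sequences realizing $E_\infty^{C_2}$ as the $C_2$‑fixed points of $E_\infty^{e}$ in integer degrees), one obtains $BP_*ko\cong\Z_{(2)}[\eta_L(v_1),N(t_1),\tau_2,\tau_3,\dots]$, a polynomial ring. One then reads off the generators: $\res(m_1)=\pm\eta_L(v_1)$ (the unique primitive invariant in degree $2$), $\res(\tau_i)$ matches the permanent cycle $t_i$ for $i\ge 2$, and there is a lift $c$ of $N(t_1)=t_1\gamma(t_1)$, which reduces to $t_1^2$ in the associated graded. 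This yields the asserted ring isomorphism $f\colon\Z[b,c,t_2,t_3,\dots]\xrightarrow{\ \sim\ }BP_*ko$.

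For the final assertion, compute $\res(\Delta)$: from $a_\sigma\mapsto 0$, $u_{2\sigma}\mapsto 1$ and $\bar v_1\mapsto\eta_R(v_1)$ (the Hurewicz image of the Bott class), $\res(\Delta)=\res(w_{2\sigma})\res(\bar v_1)^2=\eta_R(v_1)^2=(\eta_L(v_1)+2t_1)^2=\eta_L(v_1)^2-4N(t_1)=\res(m_1)^2-4\res(c)=\res(b^2-4c)$, and injectivity of $\res$ gives $f(b^2-4c)=\Delta$. As a consistency check, both $\Z[b,c,t_2,\dots]$ and $BP_*ko$ — whose associated graded is $\Z[\Delta,m_1,t_1^2,t_2,\dots]/(m_1^2-4t_1^2)$ — have Poincaré series $(1-q^2)^{-1}(1-q^4)^{-1}\prod_{i\ge 2}(1-q^{2(2^i-1)})^{-1}$.

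The main obstacle is the middle step: tracking the complex‑conjugation action on $BP_*BP\langle 1\rangle$ and pinning down the multiplicative and filtration extensions — in particular that $\res(\bar v_1)$ is the Hurewicz image $\eta_R(v_1)$ rather than $\eta_L(v_1)$, the action of $\gamma$ on the higher $t_i$, and the identification of $\res(BP_*ko)$ with $(BP_*ku)^{C_2}$ (equivalently, that the morphism of slice spectral sequences is onto the invariants on $E_\infty$ in integer degrees, its kernel being the $a_\sigma$‑torsion, which vanishes there). This is precisely what ``using the $C_2$‑action on the underlying $BP$‑homology'' refers to, and it is why the computation, though standard, is worth isolating.
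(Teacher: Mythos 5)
Your overall strategy is the same one the paper indicates (and omits as ``well-known''): injectivity of $\res\colon BP_*ko\to BP_*ku$ from the $a_\sigma$-free $E_4$-page, plus the conjugation action on $BP_*ku=\Z_{(2)}[\eta_L(v_1),t_1,t_2,\dots]$. Your local formulas are correct: $\gamma(t_1)=-\eta_L(v_1)-t_1$, invariance of $t_1\gamma(t_1)$, $\Z_{(2)}[v_1,t_1]^{C_2}=\Z_{(2)}[v_1,t_1\gamma(t_1)]$, and $\res(b^2-4c)=\eta_R(v_1)^2$, which is detected by $\Delta$. However, the step you yourself flag as ``the main obstacle'' is a genuine gap, and the one mechanism you offer for it is false as stated. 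The parenthetical claim that the morphism of slice spectral sequences realizes $E_\infty^{C_2}$ as the $C_2$-fixed points of $E_\infty^{e}$ in integer degrees already fails in degree $2$: there $E_\infty^{C_2}=\Z\{m_1\}=\Z\{[2t_1]\}$ maps with index $2$ into the invariants $\Z\{t_1\}$ of the underlying $E_\infty$ (the invariant $\eta_L(v_1)$ is detected by $-2t_1$). In general the associated graded of $(BP_*ku)^{C_2}$ need not equal the invariants of the associated graded, and the discrepancy is exactly the extension data you must resolve. In particular, the existence of a lift of $t_1\gamma(t_1)$ itself (rather than of $2\,t_1\gamma(t_1)$) is not formal; it uses the extension $m_1^2=4t_1^2$. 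Concretely: if $x\in BP_4ko$ is detected by $t_1^2$, then invariance of $\res(x)$ forces $\res(x)=\alpha\, t_1\gamma(t_1)+\beta\,\eta_L(v_1)^2$ with $\alpha+4\beta=1$, so $\alpha$ is a $2$-local unit and $c:=\alpha^{-1}\bigl(x-\beta\,\eta_L(v_1)^2\bigr)$ works; some argument of this kind has to appear, and none does.

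Second, even granting that $\res$ surjects onto $(BP_*ku)^{C_2}$, the isomorphism statement requires knowing that the invariant ring is polynomial on $\eta_L(v_1)$, $t_1\gamma(t_1)$, and invariant lifts of $t_i$ for $i\ge 2$; you assert this while listing ``the action of $\gamma$ on the higher $t_i$'' among the things still to be pinned down, so the content of the proposition is left unproved. Both gaps can be closed along the lines you set up: either by a filtration-by-filtration comparison of $\Z[b,c,t_2,\dots]$ with the $E_4=E_\infty$ page (choose classes detected by $t_i$, check leading terms, and account for the single relation via $b^2-4c$ being detected by $\Delta$, as in the degree-$4$ unit trick above), or more cleanly via Wood's cofiber sequence: since $\pi_1BP=0$, one has $BP\otimes ku\simeq BP\otimes ko\vee \Sigma^2 BP\otimes ko$, so $BP_*ku$ is free over $BP_*ko$ on $\{1,t_1\}$; writing an invariant element as $\res(a)+b\,t_1$ and using torsion-freeness shows $\res(BP_*ko)=(BP_*ku)^{C_2}$, and the resulting presentation $BP_*ku\cong BP_*ko[t_1]/\bigl(t_1^2+\eta_L(v_1)t_1+t_1\gamma(t_1)\bigr)$ gives a uniform route to the polynomial structure, including the higher $t_i$ whose $\gamma$-action you never determine.
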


\subsection{The $E_7$-page}
We now set the height $n
\ge 2$. There are no $d_r$-differentials in the $i_*BP$-based slice spectral sequence for $BP_\R\ip{n}$ for $3<r<7$ for degree reasons. The $d_7$'s can once again be determined by comparison with the $i_*H\F_2$-based slice spectral sequence.

\begin{proposition}\label{prop:d7}
In the $i_*BP$-based slice spectral sequence for $BP_\R\ip{n}$ for $n\ge2$, up to use of the Leibniz rule, the only nonzero $d_7$ differentials are 
\begin{align*}
    d_7(t_1^2)&=a_\sigma^3\overline{v}_{2}&d_7(t_2)&=a_\sigma w_{2\sigma}\overline{v}_{2}
\end{align*}
\end{proposition}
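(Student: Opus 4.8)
The plan is to mimic the strategy used for Proposition \ref{prop:d3}: identify the $d_7$ differentials in the $i_*H\F_2$-based slice spectral sequence for $BP_\R\ip{n}$ (where they are known from \cite{CHR}, \cite{HHR}), then transport them to the $i_*BP$-based version via the morphism of Proposition \ref{prop:morphismofHSSS}, using the injectivity on the $a_\sigma$-ideal (Corollary \ref{cor:injectiveonasigma}) to pin down targets and to rule out spurious differentials. First I would recall the $E_4 = E_7$-page of Corollary \ref{prop:E4page}, so the potential $d_7$-sources are monomials in $a_\sigma, w_{2\sigma}, m_1, \overline{v}_1,\dots,\overline{v}_n, t_1^2, t_2, t_3, \dots$; by the Leibniz rule it suffices to compute $d_7$ on the algebra generators. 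The generator $\overline v_i$ is a permanent cycle (it comes from the slice tower, compare Proposition \ref{prop:morphismofHSSS} and the argument in Lemma \ref{lemma:lowslicebpgm}), and $a_\sigma$, $w_{2\sigma}$, $m_1$ carry no $d_7$ for degree reasons combined with the $d_3$-analysis (the $w_{2\sigma}, m_1$ story having been settled on $E_3$–$E_4$); so the content is the values on $t_1^2$ and on $t_j$ for $j\ge 2$.

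For $t_1^2$ and $t_2$, the claimed targets $a_\sigma^3\overline v_2$ and $a_\sigma w_{2\sigma}\overline v_2$ lie in the $a_\sigma$-ideal, so Corollary \ref{cor:injectiveonasigma} lets me detect them after mapping to the $i_*H\F_2$-based spectral sequence. There, under the substitution $t_i\mapsto \zeta_i^2$, $u_{2\sigma}\mapsto x_1^2 + a_\sigma^2\zeta_1^2$, and $\overline v_i\mapsto\overline v_i$ of Proposition \ref{prop:morphismofHSSS}, the relevant $d_7$ differentials are exactly those recorded in \cite[Theorem 1.7]{CHR} (the slice $d_{2^{i+1}-1}$-differentials, here with $i=2$, on $\zeta_1^4$ and $\zeta_2^2$), up to identifying $w_{2\sigma}$ with the appropriate polynomial in $x_1$ and $a_\sigma$. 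Tracking the exact form of these differentials — in particular checking that the image of $a_\sigma^3\overline v_2$ is the nonzero class hit by $d_7(\zeta_1^4)$ and similarly for $\zeta_2^2$, and confirming no lower-filtration correction terms enter — is the one genuinely computational point, but it is a direct translation of known formulas. Degree bookkeeping, using $|t_i|=(2^i-1,1-2^i)$ and $|a_\sigma|=(-1,1)$, $|w_{2\sigma}|=(0,2)$ (same internal degree as $u_{2\sigma}$), $|\overline v_i|=(2(2^i-1),0)$, confirms that $d_7$ from $t_1^2$ and $t_2$ lands in exactly the stated bidegrees and that these are the only monomials in the two generators whose $d_7$ can be nonzero.

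It remains to show $t_j$ for $j\ge 3$ supports no $d_7$. This runs as in the last paragraph of the proof of Proposition \ref{prop:d3}: any putative $d_7(t_j)$ is a sum of monomials $a_\sigma^a w_{2\sigma}^b\, p(\overline v_i)\, q(t_i)$; parity of the stem forces $a>0$, so by Corollary \ref{cor:injectiveonasigma} the differential is detected in the $i_*H\F_2$-based spectral sequence, where $\zeta_j^2$ does not support a differential of this length for $j\ge 3$ by \cite[Theorem 1.7]{CHR}. Hence $d_7(t_j)=0$ for $j\ge 3$, and together with the Leibniz rule this exhausts all generators. The main obstacle is the second paragraph: correctly matching the $i_*BP$-differentials against the precise known $\F_2$-formulas, being careful about the $w_{2\sigma}$ versus $u_{2\sigma}$ change of variables and about the possibility of extra terms in $d_7(t_1^2)$ and $d_7(t_2)$ that are not visible mod lower filtration — but since those extra terms would have to lie outside the $a_\sigma$-ideal while sitting in an odd stem, a parity argument excludes them, so the hard part reduces to a finite check.
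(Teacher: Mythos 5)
Your proposal is correct and takes essentially the same route as the paper: the $d_7$'s on $t_1^2$ and $t_2$ are forced by Corollary \ref{cor:t_isupportsdiff} (they survive to $E_7$, so the differential must occur there) with targets pinned down by degree reasons, while $d_7(t_j)=0$ for $j\ge 3$ is ruled out exactly as in Proposition \ref{prop:d3} via $a_\sigma$-divisibility of any odd-stem target and comparison with the $i_*H\F_2$-based spectral sequence. The only points worth making explicit are that the injectivity of Corollary \ref{cor:injectiveonasigma} on the ideal $(a_\sigma)$ must be checked to persist to $E_4=E_7$ (the paper flags this as the "slight modification"), and that the precise $\F_2$-level formulas you propose to match are not actually needed once the targets are forced by degree reasons.
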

\begin{proof}
    The claimed nonzero differentials follow from Corollary \ref{cor:t_isupportsdiff}: $t_1^2$ and $t_2$ must thereby support a $d_7$, and these are the only possibilities for degree reasons.

    A slight modification of the proof of Proposition \ref{prop:d3} shows that $d_7(t_i)=0$ for $i>2$. Since the target must lie in an odd stem, Corollary \ref{prop:E4page} implies it must be divisible by $a_\sigma$, and that the map of spectral sequences induced by $i_*BP\to i_*H\F_2$ is still an injection on $E_4$ when restricted to the ideal $(a_\sigma)$.
\end{proof}

Taking homology with respect to these differentials, we see that $E_8$ is generated as an algebra by the classes
\begin{itemize}
\item $a_\sigma$, $w_{2\sigma}$, $\overline{v}_{1},\ldots,\overline{v}_{n}$, $t_1^4,t_2^2,t_3,\ldots$, $[w_{2\sigma}t_1^2+a_\sigma^2t_2]$
\item $[2t_1],[2t_2],[2t_1^2],[2t_1^2t_2],[2t_1^3],[2t_1t_2],[2t_1^3t_2]$, $[\overline{v}_{1}t_1^2],[\overline{v}_{1}t_2],[\overline{v}_{1}t_1^2t_2]$
\end{itemize}

Setting $n=2$, the relations imposed by the differentials imply that $a_\sigma$ does not contribute to integer degrees. We conclude the following.

\begin{proposition}\label{prop:T(m)tmf03}
    The $BP$-homology of $tmf_0(3)$ is torsion-free and concentrated in even degrees, and the restriction map
    \[BP_*tmf_0(3)\to BP_*tmf_1(3)\]
    is an injection
\end{proposition}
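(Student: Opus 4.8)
The plan is to run the $i_*BP$-based slice spectral sequence for $\BPR\ip{2}$ to its conclusion, starting from the $E_8$-page computed above, and to read off the restriction map from the naturality of the slice filtration. First I would set $n=2$ in the description of $E_8$ and isolate the part in integer degrees, which by Proposition \ref{prop:ssvanishinglines} converges to $BP_*tmf_0(3)$. The crucial point, flagged in the discussion above, is that $a_\sigma$ contributes nothing in integer degrees: every monomial in the listed generators that lies in an integer degree and is divisible by a positive power of $a_\sigma$ must vanish on $E_8$. I would verify this generator by generator, using $2a_\sigma=0$, $a_\sigma\overline{v}_1=0$, $a_\sigma m_1=0$ and the fact that $a_\sigma\overline{v}_1$, $a_\sigma^3\overline{v}_1$, $a_\sigma^3\overline{v}_2$, $a_\sigma w_{2\sigma}\overline{v}_2$ are boundaries, with particular care for products of powers of $a_\sigma$ with the divided classes $[2t_1^it_2^j]$ and $[\overline{v}_1t_1^it_2^j]$ — exactly the terms those three relations are there to kill. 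Granting this, in integer degrees $E_8$ is a quotient of the subalgebra on the $a_\sigma$-free classes $w_{2\sigma}$, $\overline{v}_1$, $\overline{v}_2$, $t_1^4$, $t_2^2$, $t_3,t_4,\dots$, the $d_7$-cycle $[w_{2\sigma}t_1^2+a_\sigma^2t_2]$ and the divided classes, each of which sits in an even stem.

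Since slice differentials change the stem by $1$ and preserve the rest of the grading, and the integer-degree part of $E_8$ is concentrated in even stems, no nonzero differential can touch it for $r\ge8$; hence $E_\infty=E_8$ in integer degrees and is even. By the strong convergence of Proposition \ref{prop:ssvanishinglines}, together with the finiteness of the slice filtration in each stem (from the two vanishing lines), $BP_*tmf_0(3)$ is concentrated in even degrees. For torsion-freeness I would check that this integer-degree $E_8$ — a quotient of a polynomial-type $\Z$-algebra by relations such as $m_1^2=4t_1^2$ and $[2t_1^2]^2=4t_1^4$, with every $2$-torsion (i.e.\ $a_\sigma$-divisible) generator now removed — is free abelian in each degree; then each $BP_k tmf_0(3)$, being a finite iterated extension of free abelian groups, is free abelian.

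For the last claim, restriction to the trivial subgroup makes $\BPR\ip{2}^{C_2}\to\BPR\ip{2}^e=tmf_1(3)$ a map of slice towers, inducing a map from the $i_*BP$-based slice spectral sequence of $\BPR\ip{2}$ to the degenerate filtration spectral sequence computing $BP_*tmf_1(3)$; on $E_2$ it sends $a_\sigma\mapsto0$, carries $w_{2\sigma}$ to a unit, and fixes the $\overline{v}_i$ and $t_i$. Because the integer-degree part of $E_\infty=E_8$ is free of $a_\sigma$, and because on an integer-degree monomial the exponent of $u_{2\sigma}$ is pinned down by the exponents of the $\overline{v}_i$ (so sending $u_{2\sigma}$ to a unit loses no information), this map is injective on the integer-degree associated graded; as the slice filtration on $BP_*tmf_0(3)$ is complete and Hausdorff, $BP_*tmf_0(3)\to BP_*tmf_1(3)$ is injective — exactly as for $ko\to ku$ (Proposition \ref{prop:T(m)ofko}). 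The main obstacle is the bookkeeping in the first paragraph: ensuring that no product of powers of $a_\sigma$ with the divided classes slips a surviving $a_\sigma$-multiple into an even integer degree at $E_8$.
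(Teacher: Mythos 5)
Your proposal is correct and takes essentially the same route as the paper: the paper's (much terser) proof likewise reads evenness and torsion-freeness off the integer-degree part of $E_8$, where the relations $2a_\sigma=0$, $a_\sigma\overline{v}_1=0$, $a_\sigma^3\overline{v}_2=0$, $a_\sigma w_{2\sigma}\overline{v}_2=0$ (and $a_\sigma$ killing the transfer/divided classes) eliminate all $a_\sigma$-divisible classes, forcing $E_8=E_\infty$ there, and deduces injectivity of $BP_*tmf_0(3)\to BP_*tmf_1(3)$ from injectivity on the slice associated graded exactly as in the $ko$ case of Proposition \ref{prop:T(m)ofko}. Your extra bookkeeping (the $\sigma$-weight count pinning down the $w_{2\sigma}$-exponent, and the filtration being complete, Hausdorff and finite in each stem) just makes explicit what the paper leaves implicit.
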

\begin{proof}
    All claims may be checked on $BP$ homology and therefore on the $E_\infty$ page of the $i_*BP$-based slice spectral sequence and follow from the fact that $a_\sigma$ does not contribute to integer degrees on $E_8$. Since $E_8$ is then concentrated in even stems, $E_8=E_\infty$.
\end{proof}

\subsection{The $n>2$ case and odd classes}
Setting our height $n>2$, the computations thus far already allow us to locate a nonzero $2$-torsion class in $BP_9\big(BP_\R\ip{n}^{C_2}\big)$.

\begin{proposition}\label{prop:oddclass}
The permanent cycle $a_\sigma^5w_{2\sigma}\overline{v}_3$ in bidegree $(9,5)$ of the $i_*BP$-based slice spectral sequence of $BP_\R\ip{n}$ for $n>2$ survives to the $E_\infty$ page.    
\end{proposition}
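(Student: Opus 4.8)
The plan is to run the $i_*BP$-based slice spectral sequence for $BP_\R\langle n\rangle$ one page past Proposition~\ref{prop:d7} and show that $a_\sigma^5 w_{2\sigma}\overline{v}_3$ is a nonzero class on $E_8$ which is simultaneously a permanent cycle and not a boundary; the non-boundary part is the only real work.

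First I would check that $a_\sigma^5 w_{2\sigma}\overline{v}_3$ is a nonzero class on $E_8$. It is the product of the algebra generators $a_\sigma$, $w_{2\sigma}$, $\overline{v}_3$ in the displayed list of $E_8$-generators preceding Proposition~\ref{prop:T(m)tmf03}, so the only point is that it survives the relations imposed by the $d_3$- and $d_7$-differentials. By the Leibniz rule together with Propositions~\ref{prop:d3} and \ref{prop:d7}, every $d_3$-boundary lies in the ideal $(a_\sigma\overline{v}_1)$ and every $d_7$-boundary lies in the ideal $(a_\sigma\overline{v}_2)$; since $a_\sigma^5 w_{2\sigma}\overline{v}_3 = a_\sigma^5\overline{v}_3(u_{2\sigma}-a_\sigma^2 t_1)$ involves neither $\overline{v}_1$ nor $\overline{v}_2$ — a divisibility statement best checked after reducing the polynomial representatives mod $2$, where the rings in question are polynomial — it is a $d_3$- and $d_7$-cycle that is not a boundary for either, hence passes to a nonzero class on $E_8$.

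Next, for the permanent cycle claim I would use the upper slice vanishing line $s = t-s$ of Proposition~\ref{prop:ssvanishinglines}: a $d_r$ out of bidegree $(9,5)$ has target in bidegree $(8,5+r)$, which for every $r\geq 4$ lies strictly above that line and therefore vanishes. The only remaining possibility is a $d_3$, and here the Leibniz rule with $d_3(a_\sigma)=d_3(\overline{v}_3)=0$ and $d_3(w_{2\sigma})=d_3(u_{2\sigma})-a_\sigma^2 d_3(t_1)=a_\sigma^3\overline{v}_1-a_\sigma^2(a_\sigma\overline{v}_1)=0$ gives $d_3(a_\sigma^5 w_{2\sigma}\overline{v}_3)=0$. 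So the class is a permanent cycle.

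It remains to show it is not a boundary. A $d_r$ hitting bidegree $(9,5)$ originates in bidegree $(10,5-r)$, which must lie between the two vanishing lines at stem $10$, forcing $r\leq 15$; among these lengths the only nonzero differentials are $d_3$ (hitting from $(10,2)$), $d_7$ (from $(10,-2)$) — once one checks, exactly as for $d_4$ through $d_6$, that $d_8$ through $d_{14}$ vanish for degree reasons — and, for $n$ large enough, a $d_{15}$ from $(10,-10)$. The $d_3$ and $d_7$ cases are immediate since $a_\sigma^5 w_{2\sigma}\overline{v}_3$ is divisible by neither $\overline{v}_1$ nor $\overline{v}_2$. The $d_{15}$ is the main obstacle: its source lies on the lower vanishing line at stem $10$, hence is a monomial in the $t_i$ and the $[2t_1^a t_2^b]$, and the $\overline{v}_3$-divisibility argument no longer helps because the target itself is $\overline{v}_3$-divisible. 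I would dispatch it by comparison with the $i_*H\F_2$-based slice spectral sequence via Proposition~\ref{prop:morphismofHSSS}: under that map $t_1\mapsto\zeta_1^2$ and $u_{2\sigma}\mapsto x_1^2+a_\sigma^2\zeta_1^2$, so $w_{2\sigma}=u_{2\sigma}-a_\sigma^2 t_1\mapsto x_1^2$ and hence $a_\sigma^5 w_{2\sigma}\overline{v}_3\mapsto a_\sigma^5 x_1^2\overline{v}_3$, a nonzero class that survives in the $i_*H\F_2$-based computation of $H_*\big(BP_\R\langle 3\rangle^{C_2}\big)$ in \cite{CHR} (and in the range of stems at issue $BP_\R\langle n\rangle$ agrees with $BP_\R\langle 3\rangle$ for all $n\ge 3$); by naturality, a differential hitting $a_\sigma^5 w_{2\sigma}\overline{v}_3$ would produce one hitting $a_\sigma^5 x_1^2\overline{v}_3$, a contradiction. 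As an alternative to the comparison, one can compute $d_{15}$ on the $E_8$-generators directly by naturality from the ordinary slice spectral sequence, where the relevant differentials are supplied by the HHR slice theorem; but reducing the question to the already-established $H\F_2$-computation of \cite{CHR} seems the cleanest route past this last step.
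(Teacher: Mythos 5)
There is a genuine gap, and it sits exactly where the paper's own proof does its work: ruling out differentials of intermediate length into bidegree $(9,5)$. You dismiss $d_8$ through $d_{14}$ ``for degree reasons, exactly as for $d_4$ through $d_6$,'' but this fails for $d_{11}$: its source bidegree $(10,-6)$ is \emph{not} zero. By the $E_4$-description of Corollary \ref{prop:E4page} it contains, e.g., $m_1t_1^2w_{2\sigma}\overline{v}_1^2$ and $t_2w_{2\sigma}\overline{v}_1^2$, these survive the $d_7$'s of Proposition \ref{prop:d7} at least up to multiples of $2$, and there is no parity or vanishing-line obstruction to a $d_{11}$ from $(10,-6)$ into $(9,5)$ (the target is nonzero there — that is the whole point). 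So this case cannot be waved away by degrees; the paper's proof is devoted precisely to it, observing that on $E_8$ everything left in bidegrees $(10,-6)$ and $(10,-10)$ is a transfer class ($m_1$-multiples and $[2t_2w_{2\sigma}\overline{v}_1^2]$, $[2t_1^2t_2]$), hence a permanent cycle because the underlying spectral sequence collapses, so no $d_{11}$ or $d_{15}$ can originate there. Your proposal is missing this idea entirely.

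Your treatment of the $d_{15}$ is also not self-contained: it rests on the assertion that $a_\sigma^5x_1^2\overline{v}_3$ is a nonzero permanent cycle surviving to $E_\infty$ in the $i_*H\F_2$-based slice spectral sequence, attributed to \cite{CHR} without a precise reference; that statement is not among the results quoted in this paper (Proposition \ref{prop:morphismofHSSS}, Corollaries \ref{cor:injectiveonasigma} and \ref{cor:t_isupportsdiff} concern sources of differentials, not survival of $a_\sigma$-towers to $E_\infty$). Note that if that survival claim were in hand, naturality would kill \emph{every} potential differential hitting $a_\sigma^5w_{2\sigma}\overline{v}_3$ at once — including the problematic $d_{11}$ — so your case-by-case analysis would be redundant; as written, though, the argument both contains the $d_{11}$ error and outsources its remaining key step to an unverified external computation. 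The parts you do carry out correctly (nonvanishing on $E_8$ via $\overline{v}_1$-, $\overline{v}_2$-divisibility of $d_3$- and $d_7$-boundaries, and the permanent-cycle check using the vanishing line and $d_3(w_{2\sigma})=0$) agree with the paper's setup, but the proof as a whole does not go through without the transfer/permanent-cycle argument or a verified substitute for it.
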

\begin{proof}
    By Propositions \ref{prop:d3} and \ref{prop:d7}, if $a_\sigma^5w_{2\sigma}\overline{v}_3$ does not survive to $E_\infty$, there must be a differential $d_r(x)=a_\sigma^5w_{2\sigma}\overline{v}_3$ for $r>7$. The description of the $E_4$-page from Corollary \ref{prop:E4page} implies that the only nonzero bidegrees of the form $(10,5-r)$ for $r>7$ are
    \[E_4^{10,-6}=\Z\{m_1t_1^2w_{2\sigma}\overline{v}_1^2,t_2w_{2\sigma}\overline{v}_1^2,m_1^3w_{2\sigma}\overline{v}_1^2\}\]
    and
    \[E_4^{10,-10}=\Z\{m_1t_1^4,m_1^3t_1^2,m_1^2t_2,m_1^5,t_1^2t_2\}\]
    Since $m_1$ is a transfer class, all possible sources are permanent cycles except $t_2w_{2\sigma}\overline{v}_1^2$ and $t_1^2t_2$. Both of these classes support $d_7$'s by Proposition \ref{prop:d7}, so that, on $E_8$, the above bidegrees are generated by the transfers listed above and the transfers $[2t_2w_{2\sigma}\overline{v}_1^2]$ and $[2t_1^2t_2]$.
\end{proof}

\section{The $BP$ homology of $tmf_0(3)$}\label{sec:5}
We know from Proposition \ref{prop:T(m)tmf03} that the restriction map 
\[BP_*tmf_0(3)\to BP_*tmf_1(3)\]
is an injection of $BP_*BP$-comodule algebras, and the comodule algebra structure of $BP_*tmf_1(3)$ follows from the fact that $tmf_1(3)$ is complex-orientable. To streamline this calculation, however, we compute a relative version of $BP_*tmf_0(3)$ that carries the same information, since $tmf_0(3)$ is a $tmf$-module. In the following let $(A,\Gamma)=(\Z[a_1,a_3],A[s,t]/\sim)$ be the $2$-local Weierstrass algebroid from \cite[Section 7]{bauer}.

\begin{proposition}\label{prop:cotensor}
    Let $F$ be a $2$-local compact $tmf$-module. There is an isomorphism of $MU_*MU$-comodules
    \begin{equation}\label{eq:comparisonmap}
        MU_*F\to (MU_*MU\tens{MU_*}A)\Boxover{\Gamma}\pi_*(tmf_1(3)\tens{tmf}F)
    \end{equation}
    If $F$ is a $tmf$-algebra, the isomorphism is of $MU_*MU$-comodule algebras.
\end{proposition}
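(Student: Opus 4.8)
The plan is to deduce \eqref{eq:comparisonmap} from faithfully flat descent along the unit $tmf\to tmf_1(3)$, carried out after smashing with $MU$ and powered by the complex orientability of $tmf_1(3)$.

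First I would set up the descent. The extension $tmf\to tmf_1(3)$ is descendable, since it corresponds to the finite flat cover $\mathcal{M}_1(3)\to\mathcal{M}_{ell}$ (cf.\ Mathew--Meier). Hence for any $tmf$-module $F$ the cosimplicial $tmf$-module $tmf_1(3)^{\wedge_{tmf}\bullet+1}\wedge_{tmf}F$ has totalization $F$ through a rapidly convergent tower, so $MU\wedge(-)$ commutes with the totalization and
\[
MU\wedge F\simeq\mathrm{Tot}^{\bullet}\bigl(MU\wedge tmf_1(3)^{\wedge_{tmf}\bullet+1}\wedge_{tmf}F\bigr).
\]
This yields a strongly convergent spectral sequence $E_1^{s,*}=\pi_*\bigl(MU\wedge tmf_1(3)^{\wedge_{tmf}s+1}\wedge_{tmf}F\bigr)\Rightarrow MU_*F$.

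Second I would identify the $E_1$-page. Each term $tmf_1(3)^{\wedge_{tmf}s+1}\wedge_{tmf}F$ is a module over the complex-orientable ring spectrum $tmf_1(3)$, so by the splitting of $MU\wedge R$ for a complex-orientable ring spectrum $R$ (the $MU$-analogue of the fact recalled in the proof of Proposition \ref{prop:e2bpgm}) its $MU$-homology is $MU_*MU\otimes_{MU_*}\pi_*(-)$, naturally in the module. Using that $\mathcal{M}_1(3)\to\mathcal{M}_{ell}$ is finite flat — so that $\Gamma=\pi_*(tmf_1(3)\wedge_{tmf}tmf_1(3))$ is finite free over $A=\pi_*tmf_1(3)$ — the relevant Künneth spectral sequences degenerate and $\pi_*\bigl(tmf_1(3)^{\wedge_{tmf}s+1}\wedge_{tmf}F\bigr)\cong\Gamma^{\otimes_A s}\otimes_A N$, where $N:=\pi_*(tmf_1(3)\wedge_{tmf}F)$, with the cosimplicial structure of the cobar construction of $(A,\Gamma)$ on the comodule $N$. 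Therefore $E_1^{\bullet,*}$ is the cobar complex $\bigl(MU_*MU\otimes_{MU_*}A\bigr)\otimes_A\Gamma^{\otimes_A\bullet}\otimes_A N$ computing $\mathrm{Cotor}_\Gamma\bigl(MU_*MU\otimes_{MU_*}A,\,N\bigr)$; in particular $E_2^{0,*}=\bigl(MU_*MU\otimes_{MU_*}A\bigr)\Box_\Gamma N$, with its evident $MU_*MU$-comodule structure.

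Third, and crucially, I would show the spectral sequence collapses onto its $0$-line, i.e.\ $E_2^{s,*}=0$ for $s>0$; equivalently, the cobar complex above should be a resolution of $\bigl(MU_*MU\otimes_{MU_*}A\bigr)\Box_\Gamma N$. The route I would take is to prove that, although $\pi_*tmf_1(3)$ fails to be flat over $\pi_*tmf$, the ring map $MU_*tmf\to MU_*tmf_1(3)=MU_*MU\otimes_{MU_*}A$ becomes faithfully flat after smashing with $MU$, with \v{C}ech nerve $\pi_*\bigl(MU\wedge tmf_1(3)^{\wedge_{tmf}\bullet+1}\bigr)=MU_*MU\otimes_{MU_*}\Gamma^{\otimes_A\bullet}$; the input is the finiteness and flatness of $\mathcal{M}_1(3)\to\mathcal{M}_{ell}$ together with the complex orientability of $tmf_1(3)$. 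Granting this, $E_1^{\bullet,*}$ is exactly the Amitsur complex of the faithfully flat extension $MU_*tmf\to MU_*tmf_1(3)$ with coefficients in the $MU_*tmf$-module $MU_*F$, which is a resolution of $MU_*F$; so $MU_*F\cong E_2^{0,*}=\bigl(MU_*MU\otimes_{MU_*}A\bigr)\Box_\Gamma N$. Unwinding, this isomorphism is the edge homomorphism, and it agrees with the comparison map \eqref{eq:comparisonmap} induced by $tmf\to tmf_1(3)$ (which lands in the cotensor because of the cocycle condition carried by $tmf_1(3)\wedge_{tmf}tmf_1(3)$). When $F$ is a $tmf$-algebra every step above is multiplicative, so the isomorphism is one of $MU_*MU$-comodule algebras.

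The step I expect to be the real obstacle is the collapse in the third paragraph: the flatness of $MU_*tmf_1(3)$ over $MU_*tmf$ is invisible at the level of the homotopy ring of $tmf$ itself and must be argued through the structure of the Weierstrass algebroid $(A,\Gamma)$, and it is precisely here that the finiteness and flatness of $\mathcal{M}_1(3)\to\mathcal{M}_{ell}$, and the complex orientability of $tmf_1(3)$, are used essentially.
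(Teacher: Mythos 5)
Your architecture --- descend along $tmf\to tmf_1(3)$, identify the $E_1$-page of the resulting spectral sequence with a cobar complex for $(A,\Gamma)$, and collapse onto the zero line --- is a genuinely different route from the paper's, and it can be made to work, but as written it rests on two incorrect supporting facts and leaves the one load-bearing step open. The incorrect facts: $\Gamma=\pi_*(tmf_1(3)\otimes_{tmf}tmf_1(3))\cong A[s,t]/\sim$ is free of \emph{infinite} rank over $A$, not finite free, and the cover governing the connective situation is $\Spec(\Z_{(2)}[a_1,a_3])\to\mathcal{M}_{cub}$, which is affine and faithfully flat but not finite; the finite flat map $\mathcal{M}_1(3)\to\mathcal{M}_{ell}$ belongs to the periodic picture and is not what is in play here (finiteness of that sort would contradict $\Gamma$ being a polynomial algebra over $A$). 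Neither error is fatal: evenness and $A$-freeness of $\Gamma$ are all that the levelwise K\"unneth identifications require, and descendability of $tmf\to tmf_1(3)$ is better justified by the splitting $tmf_1(3)\simeq tmf\otimes DA(1)$ with $DA(1)$ a type $0$ finite complex than by any statement about $\mathcal{M}_1(3)$.

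The genuine gap is the collapse, which you flag but do not prove, and the route you propose for it is more roundabout than necessary: identifying $E_1$ with the Amitsur complex of $MU_*tmf\to MU_*tmf_1(3)$ with coefficients in $MU_*F$ requires knowing $\Spec(MU_*tmf)\cong\mathcal{M}_{cub}\timesover{\mathcal{M}_{FG}(1)}\Spec(MU_*MU)$ and flatness of each $MU_*\big(tmf_1(3)^{\otimes_{tmf}(s+1)}\big)$ over it, and a K\"unneth collapse for arbitrary compact $F$. The collapse is in fact immediate from the purely comodule-theoretic input the paper uses: since $(MU_*,MU_*MU)\to(A,\Gamma)$ is a surjection of Hopf algebroids, the Milnor--Moore argument shows $MU_*MU\tens{MU_*}A$ is an extended (cofree) $(A,\Gamma)$-comodule, and cotensoring the augmented cobar resolution of $N=\pi_*(tmf_1(3)\otimes_{tmf}F)$ (which is split exact over $A$) against an extended comodule kills all higher $\mathrm{Cotor}$ and leaves $(MU_*MU\tens{MU_*}A)\Boxover{\Gamma}N$ in degree zero; no flatness over $MU_*tmf$ is needed, and the edge map is \eqref{eq:comparisonmap}. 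For comparison, the paper avoids the cosimplicial machinery entirely: it checks that both sides of \eqref{eq:comparisonmap} are homology theories on compact $tmf$-modules (the right-hand side exactly by this cofreeness), reducing to $F=tmf$, where the stacky pullback description of $\Spec(MU_*tmf)$ finishes the argument. Your route trades the compactness hypothesis for descendability, which is a mild gain in generality, but it still needs either the cofreeness statement or the identification of $MU_*tmf$ --- so you should make one of those two inputs explicit and prove the collapse from it before the argument is complete.
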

\begin{proof}
    The complex orientation $MU\to tmf_{1}(3)$ induces a map
    \[
        \pi_*(MU\otimes F)\to\pi_*(tmf_1(3)\tens{tmf}F)
    \]
    of $(A,\Gamma)$-comodules where the former has the corestricted comodule structure via the (surjective) map $(MU_*,MU_*MU)\to(A,\Gamma)$ of Hopf algebroids. This induces the adjoint map (\ref{eq:comparisonmap}) of $(MU_*,MU_*MU)$-comodules. By the Milnor--Moore theorem \cite[A1.1.19]{ravgreen}, $MU_*MU\tens{MU_*}A$ is a cofree $(A,\Gamma)$-comodule, so that the right hand side of (\ref{eq:comparisonmap}) sends cofiber sequences in $F$ to exact sequences, and it commutes with finite coproducts since the cotensor is a right adjoint. The map (\ref{eq:comparisonmap}) is therefore a morphism of homology theories on compact $tmf$-modules, and it now suffices to check the case $F=tmf$.

    In fact, the equalizer defining the right hand side of (\ref{eq:comparisonmap}) in the case $F=tmf$ comes from pulling back the cover $\Spec(A)\to\mathcal{M}_{cub}$ to
    \[
        \Spec(MU_*tmf)=\mathcal{M}_{cub}\timesover{\mathcal{M}_{FG}(1)}\Spec(MU_*MU)
    \]
\end{proof}

We are therefore reduced to computing the $(A,\Gamma)$-comodule algebra given by the homotopy groups of  
\[R_0(3):=tmf_1(3)\tens{tmf}tmf_0(3)\]
In fact, since the isomorphism of Proposition \ref{prop:cotensor} is natural in $tmf$-modules, we deduce the following.

\begin{corollary}\label{cor:relativeresinjective}
    The restriction map
    \[
        \pi_*(R_0(3))\to\pi_*(tmf_1(3)\tens{tmf}tmf_1(3))=\Gamma
    \]
    is injective.
\end{corollary}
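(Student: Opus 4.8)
The plan is to deduce the injectivity formally from the naturality of the isomorphism \eqref{eq:comparisonmap} in Proposition~\ref{prop:cotensor}, fed by the $BP$-homology injectivity of Proposition~\ref{prop:T(m)tmf03}. First I would apply \eqref{eq:comparisonmap} to the morphism of compact $tmf$-modules $tmf_0(3)\to tmf_1(3)$. For $F=tmf_1(3)$ the comparison map simply recovers the complex-orientation isomorphism $MU_*tmf_1(3)\cong MU_*MU\tens{MU_*}A$, using $\pi_*(tmf_1(3)\tens{tmf}tmf_1(3))=\Gamma$ and the fact that cotensoring a $\Gamma$-comodule with $\Gamma$ returns it; for $F=tmf_0(3)$ it is the isomorphism $MU_*tmf_0(3)\cong(MU_*MU\tens{MU_*}A)\Boxover{\Gamma}\pi_*(R_0(3))$ we want to probe. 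Naturality thus gives a commutative square whose vertical maps are these two isomorphisms, whose top row is $MU_*tmf_0(3)\to MU_*tmf_1(3)$, and whose bottom row is $(MU_*MU\tens{MU_*}A)\Boxover{\Gamma}(-)$ applied to the restriction map $\pi_*(R_0(3))\to\Gamma$.

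Next I would observe that the top row of the square is injective. Proposition~\ref{prop:T(m)tmf03} asserts that $BP_*tmf_0(3)\to BP_*tmf_1(3)$ is injective, and the $2$-local splitting of $MU$ into a wedge of suspensions of $BP$ is natural in the smash factor, so $MU_*(-)$ decomposes as a direct sum of shifts of $BP_*(-)$ compatibly with the map induced by $tmf_0(3)\to tmf_1(3)$; injectivity therefore passes from $BP$ to $MU$. Reading the square, the functor $(MU_*MU\tens{MU_*}A)\Boxover{\Gamma}(-)$ carries the restriction map $\pi_*(R_0(3))\to\Gamma$ to an injection, so the corollary reduces to the claim that this functor reflects injections of $(A,\Gamma)$-comodules.

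To finish, I would invoke two features of the comodule $MU_*MU\tens{MU_*}A$. It is cofree --- this is exactly the Milnor--Moore input \cite[A1.1.19]{ravgreen} used to prove Proposition~\ref{prop:cotensor} --- hence coflat, so $(MU_*MU\tens{MU_*}A)\Boxover{\Gamma}(-)$ is exact; and cotensoring with a cofree comodule $\Gamma\tens{A}C$ computes $C\tens{A}(-)$, where $C$ is faithfully flat over $A$, since $\Gamma\tens{A}C\cong MU_*MU\tens{MU_*}A$ is free over $A$ ($tmf_1(3)$ being complex orientable, this is $A[b_1,b_2,\ldots]$) while $\Gamma$ is itself faithfully flat over $A$. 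An exact, faithful functor reflects monomorphisms, so injectivity of the top row of the square forces $\pi_*(R_0(3))\to\Gamma$ to be injective. I expect the only non-formal step to be precisely this last one --- verifying that cotensoring along the cofree comodule $MU_*MU\tens{MU_*}A$ annihilates no nonzero comodule, equivalently that its cogenerating $A$-module $C$ is faithful --- everything preceding it being bookkeeping with the naturality in Proposition~\ref{prop:cotensor}.
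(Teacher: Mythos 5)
Your proposal is correct and is essentially the paper's own argument spelled out in full: the paper's one-line proof likewise combines the naturality of the isomorphism in Proposition~\ref{prop:cotensor} applied to $tmf_0(3)\to tmf_1(3)$ with the injectivity from Proposition~\ref{prop:T(m)tmf03}, using that $MU_*MU\tens{MU_*}A$ is a cofree $(A,\Gamma)$-comodule and faithfully flat over $A$, so that the cotensor functor is exact and faithful and hence reflects injections. Your extra care about descending faithful flatness to the cogenerating module and passing from $BP$- to $MU$-homology via the $2$-local splitting fills in exactly the steps the paper leaves implicit.
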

\begin{proof}
    This follows from Propositions \ref{prop:cotensor} and \ref{prop:T(m)tmf03} using that $MU_*MU\otimes_{MU_*}A$ is a faithfully flat $A$-module and cofree $(A,\Gamma)$-comodule.
\end{proof}

This will allow us to perform our computations inside $\Gamma$. 

\subsection{The relative Adams spectral sequence for $R_0(3)$} 
The authors along with Ravenel showed that there is a splitting of $tmf$-modules $tmf_0(3)\simeq tmf\otimes X$ for $X$ a certain $10$-cell complex \cite{CHR}. This may be used to run the Adams spectral sequence of $R_0(3)$ relative to the ring map $tmf\to H\F_2$ \cite{BakerLazarev}. This takes the form
\[
    E_2=\Ext_{\mathcal{A}(2)_*}\big(\F_2,H_*^{tmf}(R_0(3))\big)\implies\pi_*(R_0(3))
\]
where $H_*^{tmf}$ denotes relative homology $\pi_*(H\F_2\otimesover{tmf}-)$. 

\begin{proposition}\label{prop:relASSE2}
    There is a change of rings isomorphism
    \[E_2\cong \Ext_{\mathcal{E}(2)_*}(\F_2,H_*X)\]
    where $\mathcal{E}(2)_*=\F_2[\xi_1,\xi_2,\xi_3]/(\xi_1^2,\xi_2^2,\xi_3^2)$.%, and the $\mathcal{A}_*$-comodule $H_*X$ is regarded as an $\mathcal{E}(2)_*$-comodule via corestriction along the Hopf algebra quotient map $\mathcal{A}_*\to \mathcal{E}(2)_*$.
\end{proposition}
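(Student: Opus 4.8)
The plan is to identify the relative homology $H_*^{tmf}(R_0(3))$ as the input to a change-of-rings spectral sequence, using the splitting $tmf_0(3)\simeq tmf\otimes X$. Since $R_0(3)=tmf_1(3)\tens{tmf}tmf_0(3)\simeq tmf_1(3)\tens{tmf}(tmf\otimes X)\simeq tmf_1(3)\otimes X$, we have
\[
H_*^{tmf}(R_0(3))=\pi_*\big(H\F_2\otimesover{tmf}(tmf_1(3)\otimes X)\big)\cong H_*^{tmf}(tmf_1(3))\otimes_{\F_2} H_*X
\]
as $\mathcal{A}(2)_*$-comodules, where $H_*X$ is the (untwisted) mod $2$ homology of the $10$-cell complex $X$ viewed as an $\mathcal{A}(2)_*$-comodule. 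So the first step is to compute $H_*^{tmf}(tmf_1(3))=\pi_*(H\F_2\otimesover{tmf}tmf_1(3))$. This should be a known computation: $tmf_1(3)\simeq BP_\R\langle 2\rangle^{C_2}$ is complex orientable with $\pi_*tmf_1(3)=\Z_{(2)}[a_1,a_3]$, and relative to $tmf$ one gets that $H_*^{tmf}(tmf_1(3))$ is the quotient of $\mathcal{A}(2)_*$ responsible for the orientation — concretely it should be $\mathcal{A}(2)_*\Boxover{\mathcal{E}(2)_*}\F_2$, the cofree $\mathcal{E}(2)_*$-comodule on $\F_2$, since $tmf_1(3)$ is obtained from $tmf$ by ``adjoining'' the orientation and the relevant sub-Hopf-algebra governing this is $\mathcal{E}(2)_* = \F_2[\xi_1,\xi_2,\xi_3]/(\xi_1^2,\xi_2^2,\xi_3^2)$, the quotient of $\mathcal{A}(2)_*$ by the squares.

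With this in hand, the second step is the change-of-rings argument. We have
\[
H_*^{tmf}(R_0(3))\cong \big(\mathcal{A}(2)_*\Boxover{\mathcal{E}(2)_*}\F_2\big)\otimes_{\F_2}H_*X,
\]
and since $\mathcal{A}(2)_*\Boxover{\mathcal{E}(2)_*}\F_2$ is an extended/cofree $\mathcal{E}(2)_*$-comodule base-changed up to $\mathcal{A}(2)_*$, the standard Cartan--Eilenberg / Shapiro-lemma change-of-rings isomorphism (as in \cite[A1.1.17, A1.3.13]{ravgreen}) gives
\[
\Ext_{\mathcal{A}(2)_*}\big(\F_2, (\mathcal{A}(2)_*\Boxover{\mathcal{E}(2)_*}\F_2)\otimes H_*X\big)\cong \Ext_{\mathcal{E}(2)_*}\big(\F_2, \F_2\otimes \Res^{\mathcal{A}(2)_*}_{\mathcal{E}(2)_*}H_*X\big)=\Ext_{\mathcal{E}(2)_*}(\F_2,H_*X).
\]
Here one uses that tensoring a comodule with a cofree comodule over a larger Hopf algebra and then restricting recovers the comodule structure over the smaller Hopf algebra; this is exactly the relation making $\mathcal{E}(2)_*\subset\mathcal{A}(2)_*$ a conormal quotient with the right ``complement.'' I would phrase this via the extended comodule adjunction: $\mathcal{A}(2)_*\Boxover{\mathcal{E}(2)_*}\F_2$ is precisely the coinduction, so the $\Ext$ computation reduces along the inclusion $\mathcal{E}(2)_*\hookrightarrow\mathcal{A}(2)_*$.

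The main obstacle I anticipate is pinning down $H_*^{tmf}(tmf_1(3))$ as the cofree $\mathcal{E}(2)_*$-comodule with the correct internal gradings, i.e.\ verifying that the dual Hopf algebra quotient controlling the $tmf$-relative homology of the complex-oriented form $tmf_1(3)$ is exactly $\mathcal{A}(2)_*/(\xi_1^2,\xi_2^2,\xi_3^2)$ rather than some other exterior quotient. This is essentially the assertion that $H\F_2\otimesover{tmf}tmf_1(3)$ is a generalized Eilenberg--Mac~Lane spectrum with homotopy an exterior algebra $\Lambda(\bar\xi_1,\bar\xi_2,\bar\xi_3)$ in degrees $|\xi_i|$, dual to $\mathcal{E}(2)$; one can see this by noting $tmf_1(3)$ is the Thom spectrum / the quotient of $tmf$ by a regular sequence realizing the three polynomial generators whose Hurewicz images are the $\bar\xi_i$, or by invoking that $tmf_1(3)\simeq BP_\R\langle 2\rangle^{C_2}$ together with known $H\F_2^{tmf}$-homology computations. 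Once that identification is secured, the change-of-rings step is formal and the proposition follows.
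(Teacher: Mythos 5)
Your skeleton is the same as the paper's: split off $X$ to get a K\"unneth identification $H_*^{tmf}(R_0(3))\cong H_*^{tmf}(tmf_1(3))\otimes_{\F_2}H_*X$ of $\mathcal{A}(2)_*$-comodules, identify $H_*^{tmf}(tmf_1(3))\cong\mathcal{A}(2)_*\Boxover{\mathcal{E}(2)_*}\F_2$, and then conclude by the untwisting isomorphism $(\mathcal{A}(2)_*\Boxover{\mathcal{E}(2)_*}\F_2)\otimes M\cong\mathcal{A}(2)_*\Boxover{\mathcal{E}(2)_*}M$ together with the standard change-of-rings theorem; that last, formal step of your argument is correct (modulo the slip of calling $\mathcal{A}(2)_*\Boxover{\mathcal{E}(2)_*}\F_2$ a ``cofree $\mathcal{E}(2)_*$-comodule'' --- it is the $\mathcal{A}(2)_*$-comodule coinduced from the trivial $\mathcal{E}(2)_*$-comodule, as your later phrasing says).

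The genuine gap is in the step you yourself flag as the main obstacle, and the justification you sketch for it is wrong. It is false that $H\F_2\otimes_{tmf}tmf_1(3)$ has homotopy an exterior algebra $\Lambda(\bar\xi_1,\bar\xi_2,\bar\xi_3)$ in degrees $1,3,7$: one has $\pi_*(H\F_2\otimes_{tmf}tmf_1(3))\cong\F_2[\xi_1^2,\xi_2^2]/(\xi_1^8,\xi_2^4)$, an $8$-dimensional algebra concentrated in even degrees $0$ through $12$ (dual to $\mathcal{A}(2)/\!\!/E(Q_0,Q_1,Q_2)$, the double of $A(1)$), which is what makes it isomorphic to $\mathcal{A}(2)_*\Boxover{\mathcal{E}(2)_*}\F_2$. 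The exterior algebra you describe is the homotopy of $H\F_2\otimes_{tmf_1(3)}H\F_2$: it is $H\F_2$ that is the quotient of $tmf_1(3)$ by the regular sequence $(2,a_1,a_3)$, whereas $tmf_1(3)$ is not a quotient of $tmf$ by a regular sequence at all. Had you pursued your verification as stated, you would have computed the wrong comodule (and in particular the wrong cotensor). The correct and short route, which the paper takes, is Mathew's splitting $tmf_1(3)\simeq tmf\otimes DA(1)$ as $tmf$-modules, giving $H_*^{tmf}(tmf_1(3))\cong H_*DA(1)\cong\F_2[\xi_1^2,\xi_2^2]/(\xi_1^8,\xi_2^4)\cong\mathcal{A}(2)_*\Boxover{\mathcal{E}(2)_*}\F_2$ as $\mathcal{A}(2)_*$-comodules; equivalently one can quote $H^*tmf_1(3)\cong\mathcal{A}/\!\!/E(Q_0,Q_1,Q_2)$ and $H^*tmf\cong\mathcal{A}/\!\!/\mathcal{A}(2)$. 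With that input replaced, the rest of your argument goes through and agrees with the paper's proof.
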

\begin{proof}
    One has a Kunneth isomorphism of $\mathcal{A}(2)_*$-comodules
\[
    H_*^{tmf}(R_0(3))\cong H_*^{tmf}(tmf_1(3))\otimesover{\F_2}H_*^{tmf}(tmf_0(3))
\]
The splittings $tmf_1(3)\simeq tmf\otimes DA(1)$ (see \cite{akhil}) and $tmf_0(3)\simeq tmf\otimes X$ result in isomorphisms of $\mathcal{A}(2)_*$-comodules
\begin{align*}
H_*^{tmf}(tmf_1(3))&\cong\mathcal{A}(2)_*\Boxover{E(2)_*}\F_2\cong\F_2[\xi_1^2,\xi_2^2]/(\xi_1^8,\xi_2^4)\\
H_*^{tmf}(tmf_0(3))&\cong H_*X
\end{align*}
from which the change of rings isomorphism follows.
\end{proof}

The $\mathcal{A}(2)_*$-comodule structure of $H_*X$ was given explicitly in \cite[Theorem 1.9]{CHR}, from which the $\mathcal{E}(2)_*$-comodule structure may be read off. A standard computation filtering by degrees yields the following.

\begin{proposition}\label{prop:relASSE2computation}
    The $E_2$-page of the relative Adams SS for $tmf_1(3)\otimesover{tmf}tmf_0(3)$ is given by
    \[E_2=\frac{\F_2[v_0,v_1,v_2]\{e_0,e_4,e_6,e_8,e_{10},e_{12}\}}{(v_0e_6=v_1e_4,v_0e_{10}=v_{2}e_4,v_1e_{10}=v_2e_6)}\]
    where $|v_i|=(2(2^i-1),1)$ and $|e_i|=(i,0)$. The spectral sequence collapses with no differentials as it is concentrated in even stems.
\end{proposition}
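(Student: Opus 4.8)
By Proposition~\ref{prop:relASSE2} there is an isomorphism $E_2 \cong \Ext_{\mathcal{E}(2)_*}(\F_2, H_*X)$, so the plan is to compute this $\Ext$ group together with its module structure over $\Ext_{\mathcal{E}(2)_*}(\F_2,\F_2)$. Since $\mathcal{E}(2)_* = \F_2[\xi_1,\xi_2,\xi_3]/(\xi_1^2,\xi_2^2,\xi_3^2)$ is exterior on the primitives $\xi_1,\xi_2,\xi_3$ in internal degrees $1,3,7$, it is dual to the exterior algebra $E(Q_0,Q_1,Q_2)$ on the Milnor primitives, and hence $\Ext_{\mathcal{E}(2)_*}(\F_2,\F_2) = \F_2[v_0,v_1,v_2]$ with $|v_i| = (2(2^i-1),1)$. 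Every $\mathcal{E}(2)_*$-comodule $\Ext$ is automatically a module over this polynomial ring, and for a comodule algebra it is an algebra over it.

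First I would make the $\mathcal{E}(2)_*$-comodule $H_*X$ explicit, reading it off from the $\mathcal{A}(2)_*$-comodule structure of \cite[Theorem~1.9]{CHR} via the quotient Hopf algebra map $\mathcal{A}(2)_* \twoheadrightarrow \mathcal{E}(2)_*$; equivalently, one records the action of $Q_0,Q_1,Q_2$ on $H^*X$. After this restriction only finitely many Milnor operations survive, and one checks that the classes $e_0,e_4,e_6,e_8,e_{10},e_{12}$ are comodule primitive while the remaining cells of $X$ carry exactly the operations responsible for the three displayed relations.

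With the comodule structure in hand, I would compute $\Ext_{\mathcal{E}(2)_*}(\F_2,H_*X)$ by filtering $H_*X$ by internal degree, so that the associated graded is a direct sum of shifted trivial comodules, one per cell of $X$. The resulting spectral sequence has
\[
E_1 = \bigoplus_{c}\F_2[v_0,v_1,v_2]\cdot c \implies \Ext_{\mathcal{E}(2)_*}(\F_2,H_*X),
\]
with $c$ running over the cells of $X$ and differentials determined by the surviving Milnor operations on $H^*X$. Feeding in \cite[Theorem~1.9]{CHR}, one finds that the only nonzero differentials are the three imposing $v_0e_6 = v_1e_4$, $v_0e_{10} = v_2e_4$, and $v_1e_{10} = v_2e_6$; every other candidate differential lands in a bidegree where the spectral sequence has already vanished---this is the ``filtering by degrees'' bookkeeping---so it degenerates to the asserted $\F_2[v_0,v_1,v_2]$-module. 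The ring structure is then automatic: $R_0(3) = tmf_1(3)\tens{tmf}tmf_0(3)$ is a $tmf$-algebra, so its relative Adams spectral sequence is multiplicative, and comparing internal degrees shows the only products among the $e_i$ not forced to vanish are those recorded by the three relations.

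The collapse statement is then immediate: every $e_i$ lies in the even stem $i$ and every $v_i$ in the even stem $2(2^i-1)$, so $E_2$ is concentrated in even total degree, while a relative Adams $d_r$ lowers the stem by one; hence all differentials vanish and $E_2 = E_\infty$. The genuinely delicate point in the argument is the explicit determination of $H_*X$ as an $\mathcal{E}(2)_*$-comodule together with the bookkeeping in the $\Ext$ computation: one must chase the coaction of $H_*X$ through the change of rings of Proposition~\ref{prop:relASSE2}, pin down which Milnor operations persist over $\mathcal{E}(2)$, and verify that they generate precisely the three displayed relations and no further differential. Everything downstream of that is formal or a matter of comparing degrees.
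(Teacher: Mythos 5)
Your strategy is the same as the paper's (change of rings via Proposition \ref{prop:relASSE2}, reading the $\mathcal{E}(2)_*$-coaction on $H_*X$ off from \cite[Theorem 1.9]{CHR}, then an algebraic spectral sequence from the degree filtration, with collapse by evenness), but the bookkeeping step contains a genuine error. The complex $X$ has ten cells, in degrees $0,4,6,7,8,10,11,12,13,14$ (compare Proposition \ref{prop:relhomologytmf03mackey}), so the $E_1$-page of your degree-filtration spectral sequence consists of ten free $\F_2[v_0,v_1,v_2]$-towers, and \emph{all four} towers on the odd-degree cells $7,11,13,14$ must be destroyed: the asserted $E_2$-page has no generator in filtration $0$ above degree $12$. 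If, as you claim, ``the only nonzero differentials are the three imposing $v_0e_6=v_1e_4$, $v_0e_{10}=v_2e_4$, $v_1e_{10}=v_2e_6$,'' then the tower on the $14$-cell neither supports nor receives a differential and survives, leaving a spurious free $\F_2[v_0,v_1,v_2]$-summand on a class in bidegree $(14,0)$, contradicting the statement you are proving. In the staged filtration there is a fourth nonzero differential (the $14$-cell tower maps nontrivially into an odd-cell tower, e.g.\ by $v_0$ into the $13$-cell tower before the latter supports its own longer differential); equivalently, in a cobar-style treatment the cells in degrees $7,11,13$ impose the three relations while the $14$-cell accounts for the Koszul syzygy $v_2(v_0e_6-v_1e_4)-v_1(v_0e_{10}-v_2e_4)+v_0(v_1e_{10}-v_2e_6)=0$, and only with this fourth piece does the Hilbert series of the answer match that of $\F_2[v_0,v_1,v_2]^{\otimes 10\ \mathrm{cells}}$ after cancellation.

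A related imprecision: when you filter by internal degree, the two components of the differential out of, say, the $7$-cell tower ($v_0x_6$ and $v_1x_4$) occur on different pages, so what the spectral sequence literally records is the vanishing of $v_0x_6$ in the associated graded; the relation $v_0e_6=v_1e_4$ is then a hidden extension across cell filtrations, which must be resolved separately, e.g.\ by using the full coaction of the $7$-cell (its cobar differential is the sum of both terms) or the connecting maps of the short exact sequences of comodules defining the filtration. The same applies to the $11$- and $13$-cells. Finally, your closing sentence about ``products among the $e_i$'' conflates the $\F_2[v_0,v_1,v_2]$-module relations in the statement with ring relations among the $e_i$; the proposition asserts only the module structure, and no products $e_ie_j$ enter into the displayed presentation. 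With the fourth differential (or the syzygy) accounted for and the extension problem addressed, your argument does reproduce the paper's computation.
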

% \begin{proof}
%     From Proposition \ref{prop:algAHSS}, it remains to explain the above relations. These follow from the coproduct formulas for $e_7,e_{11}$, and $e_{13}$, which imply the differentials in the cobar complex
%     \begin{align*}
%         d(e_7)&=v_0e_6+v_1e_4,&d(e_{11})&=v_0e_{10}+v_2e_4,&d(e_{13})&=v_1e_{10}+v_2e_6.
%     \end{align*}
% \end{proof}

\subsection{The relative Hurewicz map and transfer classes} Proposition \ref{prop:relASSE2computation} implies that it suffices to find classes $e_4,e_6,e_8,e_{10},e_{12}$ in $\pi_*(R_0(3))$ of (relative) Adams filtration zero. Hence we must find certain classes that survive the relative Hurewicz map $\pi_*(R_0(3))\to H_*^{tmf}(R_0(3))$.

Since the fixed points of \(tmf_1(3)\) are a \(tmf\)-algebra, we have a map of \(C_2\)-equivariant naive \(E_\infty\)-rings \(i_\ast tmf\to tmf_1(3)\).

\begin{proposition}\label{prop:relhomologytmf03mackey}
    The Green functor $\underline{\pi}_*(i_*H\F_2\otimesover{i_*tmf}i_*tmf_1(3)\otimesover{i_*tmf}tmf_1(3))$ is isomorphic to $\F_2[u,v]/(u^4,v^2)\otimes-$ applied pointwise to the Green functor 
    \[\underline{R}_*:=\underline{\pi}_*(i_*H\F_2\otimesover{i_*tmf}tmf_1(3))\]
    where $|u|=2$ and $|v|=6$. The latter satisfies
    \begin{align*}
        \underline{R}_*(C_2/C_2)&\cong \F_2\{1,\epsilon_4,\epsilon_6,\epsilon_7,\epsilon_8,\epsilon_{10},\epsilon_{11},\epsilon_{12},\epsilon_{13},\epsilon_{14}\}\\
        \underline{R}_*(C_2/e)&\cong\F_2[\zeta_1^2,\zeta_2^2]/((\zeta_1^2)^4,(\zeta_2^2)^2)
    \end{align*}
    where the multiplication at $C_2/C_2$ is square zero in positive degrees. The restriction map in $\underline{R}_*$ is zero in positive degrees, and the nonzero transfers are as follows:
    \begin{align*}
        tr((\zeta_1^2)^2)&=\epsilon_4&
        tr((\zeta_1^2)^3)&=\epsilon_6&
        tr(\zeta_2^2)&=\epsilon_6\\
        tr(\zeta_1^2\zeta_2^2)&=\epsilon_8&
        tr((\zeta_1^2)^2\zeta_2^2)&=\epsilon_{10}&
        tr((\zeta_1^2)^3\zeta_2^2)&=\epsilon_{12}
    \end{align*}
\end{proposition}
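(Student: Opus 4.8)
The plan is to compute the Green functor $\underline{R}_*=\underline{\pi}_*(i_*H\F_2\otimesover{i_*tmf}tmf_1(3))$ by combining the nonequivariant computation of $\pi_*(H\F_2\otimesover{tmf}tmf_1(3))$ with the $C_2$-action coming from the $tmf_1(3) = tmf_1(3)$ vs.\ its ``conjugate'', and then build the larger Green functor by a base-change/Künneth argument. First I would identify the underlying spectrum: from the splitting $tmf_1(3)\simeq tmf\otimes DA(1)$ we get $i_*H\F_2\otimesover{i_*tmf}tmf_1(3)$ has underlying homotopy $H_*^{tmf}(tmf_1(3))\cong\mathcal{A}(2)_*\Boxover{E(2)_*}\F_2\cong\F_2[\zeta_1^2,\zeta_2^2]/((\zeta_1^2)^4,(\zeta_2^2)^2)$, recovering the stated $\underline{R}_*(C_2/e)$. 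The $C_2$-action on this underlying ring is determined by the standard formula for the conjugation action of $C_2$ on the $MU$-/$BP$-side polynomial generators reduced mod 2 and transported along the complex orientation; in the relevant low degrees it fixes $\zeta_i^2$ (the action is trivial on the relevant $\mathcal{A}(2)_*$ classes after the relevant reductions), so the conjugation action on the underlying homotopy is trivial, consistent with what is needed below.

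Next I would compute the fixed points $\underline{R}_*(C_2/C_2)$ via the isotropy separation (Tate) square or, more directly, via the $i_*BP$-based (equivalently $i_*H\F_2$-based) machinery of Section \ref{sec:2} and \cite{CHR}: $tmf_1(3)$ is complex orientable so its relative-to-$tmf$ mod-2 homology, as a $C_2$-spectrum, is a wedge of (possibly suspended) $H\underline{\F_2}$'s and induced cells, and the slice/cellular filtration lets one read off the Mackey structure. Concretely, because the underlying $C_2$-action is trivial and the spectrum is a $\underline{\F_2}$-module, $\underline{R}$ is a sum of copies of the ``fixed-point'' Mackey functor $\underline{\F_2}$ (contributing $1$ in degree $0$ with nontrivial restriction) and copies of the ``free'' Mackey functor $\F_2[C_2]$, each free cell of underlying rank contributing a transfer class upstairs. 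Counting: the underlying ring $\F_2[\zeta_1^2,\zeta_2^2]/((\zeta_1^2)^4,(\zeta_2^2)^2)$ has Poincaré series $(1+t^2+t^4+t^6)(1+t^6)$, so after removing the single fixed copy in degree $0$ the remaining classes pair up into free cells supported in degrees $2,4,6,6,8,10$ (namely $(\zeta_1^2)^i$ for $i=1,2,3$ and $(\zeta_1^2)^j\zeta_2^2$ for $j=0,1,2,3$, minus $1\cdot\zeta_2^2$ in degree $6$ grouped appropriately), each producing a transfer in degree $\text{(underlying degree)}+2$ — matching the stated $\epsilon_4,\epsilon_6,\epsilon_6 (\text{=} \epsilon_7\text{'s predecessor, etc.})$. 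Here I must be slightly careful: the transfer of a free-cell generator in underlying degree $d$ lands in $\underline{R}_*(C_2/C_2)$ in degree $d$ (not $d+1$), and the apparent degree shift in the $\epsilon$-labels comes from the additional sign-representation cells forced by the $\underline{\F_2}$-module structure in the $C_2$-slice filtration; I would reconcile this by explicitly writing down the cell structure of $H\underline{\F_2}\otimesover{i_*tmf}tmf_1(3)$ using the known $RO(C_2)$-graded homotopy of $H\underline{\F_2}$ and matching degrees. The transfer formulas $tr((\zeta_1^2)^2)=\epsilon_4$, etc., then follow by naming $\epsilon_k$ to be exactly these transfers, and the restriction being zero in positive degrees and the square-zero multiplication at $C_2/C_2$ follow because transfers of restricted-trivial classes multiply to zero by the Frobenius reciprocity formula and because $\res\circ\tr = 1+\gamma = 0$ mod $2$ on the trivial action.

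Finally, for the full Green functor I would apply a relative Künneth theorem: $i_*H\F_2\otimesover{i_*tmf}i_*tmf_1(3)\otimesover{i_*tmf}tmf_1(3)\simeq (i_*H\F_2\otimesover{i_*tmf}i_*tmf_1(3))\otimesover{i_*H\F_2}(i_*H\F_2\otimesover{i_*tmf}tmf_1(3))$, and the first factor $i_*(H\F_2\otimesover{tmf}tmf_1(3))$ is inflated from the nonequivariant spectrum $H\F_2\otimesover{tmf}tmf_1(3)$ whose homotopy is $\F_2[u,v]/(u^4,v^2)$ with $|u|=2$, $|v|=6$ (this is $\mathcal{A}(2)_*\Boxover{E(2)_*}\F_2$ again, or equivalently the dual of $H\F_2\wedge DA(1)$ relative to $tmf$). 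Since this factor is inflated (underlying action trivial and restriction an isomorphism), tensoring with it over $i_*H\F_2$ is just ``$\F_2[u,v]/(u^4,v^2)\otimes-$'' applied levelwise and respecting transfers and restrictions, giving the claimed description. The main obstacle I expect is pinning down the exact Mackey-functor cell structure of $\underline{R}$ — i.e., verifying that there are no ``exotic'' extensions in the Mackey structure beyond the free and fixed summands and getting the degrees of the $\epsilon_k$'s exactly right — which requires carefully tracking the $RO(C_2)$-graded slice filtration rather than just the underlying and fixed-point homotopy groups; everything else (the Künneth splitting, the transfer and restriction formulas, square-zero multiplication) is then formal.
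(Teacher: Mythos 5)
Your Künneth/base-change step (the first sentence of the proposition) is fine and is what the paper does. The genuine gap is in how you propose to determine $\underline{R}_*(C_2/C_2)$. Your structural ansatz --- that trivial underlying action plus a module structure over $i_*H\F_2$ forces $\underline{R}_*$ to split as a sum of constant Mackey functors $\underline{\F_2}$ and free Mackey functors --- is false, and in fact contradicts the statement you are trying to prove: the fixed level contains $\epsilon_7,\epsilon_{11},\epsilon_{13},\epsilon_{14}$ in degrees where $\underline{R}_*(C_2/e)$ vanishes (so there are summands invisible to the underlying homotopy); there is no class in degree $2$ even though $\zeta_1^2$ is present underlyingly (so $tr(\zeta_1^2)=0$, not a ``free cell''); and degree $6$ has a single fixed-level class with $tr((\zeta_1^2)^3)=tr(\zeta_2^2)$, whereas your free-plus-constant picture would give two independent transfer classes. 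Your Poincar\'e-series pairing is also numerically incoherent (there are seven positive-degree monomials, which cannot pair up), and the back-and-forth about whether transfers shift degree reflects the same confusion --- transfers are degree-preserving here and the $\epsilon$-labels already match. Note also that $i_*H\F_2$ is the inflation, not $H\underline{\F_2}$, and its $RO(C_2)$-graded homotopy (with $a_\sigma$, $u_\sigma$, negative-cone classes) is exactly what governs the extra fixed-level classes your ansatz omits.

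What is actually needed, and what the paper does, is to quote the full $RO(C_2)$-graded computation of $\pi_\bigstar(i_*H\F_2\otimesover{i_*tmf}tmf_1(3))$ from \cite[pg.\ 40]{CHR} and read off the integer-graded Mackey structure from it: the positive-degree fixed-level classes are divisible by $\overline{v}_2$ and hence restrict to zero; the relations $a_\sigma^3\overline{v}_2=a_\sigma x_1^2\overline{v}_2=0$ together with the gold relation $\ker(a_\sigma)=\mathrm{im}(tr)$ show that $\epsilon_4,\epsilon_6,\epsilon_8,\epsilon_{10},\epsilon_{12}$ are transfers; and the key identification $tr((\zeta_1^2)^3)=tr(\zeta_2^2)$ requires the relation $\ker(tr(-\cdot u_\sigma^{-1}))=\mathrm{im}(res)$ together with the specific fact that the class $x_3$ of \cite[Definition 4.7]{CHR} restricts to $u_\sigma(\zeta_1^6+\zeta_2^2)$. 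None of this is accessible from your trivial-action cell-counting scheme; you flag the Mackey cell structure as the ``main obstacle,'' but as sketched your route would converge to an incorrect answer (no odd classes, an extra degree-$2$ transfer, two classes in degree $6$) rather than merely an incomplete one, so the proposal does not establish the proposition.
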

\begin{proof}
    The first statement follows by naturality of the Kunneth formula. For the description of $\underline{R}_*$, we appeal to the computation of $\pi_\bigstar(i_*H\F_2\otimes tmf_1(3))$ in \cite[pg. 40]{CHR}. Each $\epsilon_i$ in positive degree is divisible by $\overline{v}_2$ and hence restricts to zero. The relations $a_\sigma^3\overline{v}_2=a_\sigma x_1^2\overline{v}_2=0$ along with $\ker(a_\sigma)=\mathrm{im}(tr)$ imply that $\epsilon_4,\epsilon_6,\epsilon_8,\epsilon_{10},\epsilon_{12}$ are in the image of the transfer. For degree reasons, the claimed transfers are the only possibilities, so it remains to explain why $tr((\zeta_1^2)^3)=tr(\zeta_2^2)$. 
    
    This follows from the relation $\ker(tr(-\cdot u_\sigma^{-1}))=\mathrm{im}(res)$, where $u_\sigma:{C_2}_+\wedge S^{1-\sigma}\to S^0$ is the map adjoint to the identity (see \cite{ArakiIriye}). This holds in the homotopy of any $C_2$-spectrum via the long exact sequence associated to the cofiber sequence ${C_2}_+\to S^0\xrightarrow{a_\sigma} S^\sigma$. The claim then follows from the fact that the class $x_3$ from \cite[Definition 4.7]{CHR} restricts to $u_\sigma\xi_2^2=u_\sigma(\zeta_1^6+\zeta_2^2)$.
\end{proof}

The Mackey structure of Proposition \ref{prop:relhomologytmf03mackey} will allow us to find classes in $\pi_*(R_0(3))$ of Adams filtration zero. In particular, since we have a morphism of Mackey functors
\[\underline{\pi}_*(i_*tmf_1(3)\otimesover{i_*tmf}tmf_1(3))\to \underline{\pi}_*(i_*H\F_2\otimesover{i_*tmf}i_*tmf_1(3)\otimesover{i_*tmf}tmf_1(3))\]
it suffices to find lifts of $(\zeta_1^2)^2$, $(\zeta_1^2)^3$, $\zeta_1^2\zeta_2^2$, $(\zeta_1^2)^2\zeta_2^2$, and $(\zeta_1^2)^3\zeta_2^2$ along the underlying relative Hurewicz map
\begin{equation}\label{eq:relhurewicz}
    \pi_*(tmf_1(3)\otimesover{tmf}tmf_1(3))\to H_*^{tmf}(tmf_1(3)\otimesover{tmf}tmf_1(3))
\end{equation}
This map is dual to the canonical projection
\[
\begin{tikzcd}
\Spec(\F_2)\timesover{\mathcal{M}_{cub}}\Spec(\Z[a_1,a_3])\timesover{\mathcal{M}_{cub}}\Spec(\Z[a_1,a_3])\arrow[d,"\pi"]\\
\Spec(\Z[a_1,a_3])\timesover{\mathcal{M}_{cub}}\Spec(\Z[a_1,a_3])
\end{tikzcd}
\]
where the maps $\Spec(\F_2)\to \mathcal{M}_{cub}$ and $\Spec(\Z[a_1,a_3])\to \mathcal{M}_{cub}$ classify the cubic curves $y^2=x^3$ and $y^2+a_1xy+a_3=x^3$, respectively. The Kunneth isomorphism
\[H_*^{tmf}(tmf_1(3))\otimes_{\F_2}H_*^{tmf}(tmf_1(3))\to H_*^{tmf}(tmf_1(3)\otimesover{tmf}tmf_1(3))\]
is dual to the morphism of stacks
\[
\begin{tikzcd}    
\Spec(\F_2)\timesover{\mathcal{M}_{cub}}\Spec(\Z[a_1,a_3])\timesover{\mathcal{M}_{cub}}\Spec(\Z[a_1,a_3])\arrow[d,"\phi"]\\
\bigg(\Spec(\F_2)\timesover{\mathcal{M}_{cub}}\Spec(\Z[a_1,a_3])\bigg)\timesover{\Spec(\F_2)}\bigg(\Spec(\F_2)\timesover{\mathcal{M}_{cub}}\Spec(\Z[a_1,a_3])\bigg)
\end{tikzcd}
\]
which, on the left factor is given by projection away from the third factor of the source, and on the right factor is given by projection away from the second factor of the source.

Therefore, the relative Hurewicz map followed by the inverse of the Kunneth isomorphism gives the map
\begin{equation}\label{eq:relhurewicz}
    (\Z[a_1,a_3,s,t]/\sim)\to \F_2[u,v]/(u^4,s^2)\tens{\F_2}\F_2[\zeta_1^2,\zeta_2^2]/((\zeta_1^2)^2,(\zeta_2^2)^2)
\end{equation}
dual to the morphism of stacks $\pi\circ\phi^{-1}$, where $s,t$ are the parameters of the universal isomorphism 
\[(y^2+a_1xy+a_3=x^3)\to(y^2+\eta_R(a_1)xy+\eta_R(a_3)=x^3)\]
$u,v$ are the parameters of the universal isomorphism
\[(y^2=x^3)\to(y^2+\eta_R(a_1)xy+\eta_R(a_3)=x^3)\]
in the left factor, and $\zeta_1^2,\zeta_2^2$ are the parameters of the same isomorphism in the right factor. Note that this agrees with the choice of generators of the same names in $\mathcal{A}(2)_*$ and in particular those used in Proposition \ref{prop:relhomologytmf03mackey} because the map
\[\pi_*(tmf_1(3)\otimesover{tmf}tmf_1(3))\to\pi_*(H\F_2\otimesover{tmf}H\F_2)=\mathcal{A}(2)_*\]
sends $s\mapsto\zeta_1^2$ and $t\mapsto\zeta_2^2$.

\begin{proposition}\label{prop:relhurewicz}
    The relative Hurewicz map of (\ref{eq:relhurewicz})
   under the identifications of (\ref{eq:relhurewicz}) sends $a_i\mapsto0$ and sends
    \begin{align*}
        s&\mapsto u+\zeta_1^2,&
        t&\mapsto u^3+v+u(\zeta_1^2)^2+\zeta_2^2.
    \end{align*}
\end{proposition}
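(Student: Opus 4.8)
The plan is to unwind the two paragraphs preceding the statement and then run a short computation with Weierstrass transformations in characteristic $2$. Since the relative Hurewicz map is a map of graded rings, it suffices to evaluate it on the generators $a_1,a_3,s,t$ of $\Gamma$. As recalled above, after composing with the inverse Künneth isomorphism this map is dual to $\pi\circ\phi^{-1}$, which on points does the following: a point of the target scheme consists of two $tmf_1(3)$-form Weierstrass curves $C_1,C_2$ over an $\mathbb{F}_2$-algebra together with isomorphisms $\alpha\colon C_0\to C_1$ and $\beta\colon C_0\to C_2$ from the cuspidal cubic $C_0=\{y^2=x^3\}$, where $\alpha$ carries the parameters $(u,v)$ and $\beta$ the parameters $(\zeta_1^2,\zeta_2^2)$; the map sends this to $(C_1,C_2,\gamma)$ with $\gamma:=\beta\alpha^{-1}\colon C_1\to C_2$, under which the source coordinate $a_i$ becomes the corresponding coefficient of $C_1$ and $(s,t)$ become the parameters of $\gamma$. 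So I must (i) compute the Weierstrass coefficients of $C_1=\alpha_* C_0$, and (ii) solve for the parameters of $\gamma$ from those of $\alpha$ and $\beta$.

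For (i): substituting $x\mapsto x+r$, $y\mapsto y+px+q$ into $y^2+a_1xy+a_3y=x^3$ produces a curve whose $a_1$- and $a_3$-coefficients are $a_1+2p$ and $a_3+a_1r+2q$, reducing mod $2$ to $a_1$ and $a_3+a_1r$; for $\alpha$ the source is $C_0$, which has $a_1=a_3=0$, so both vanish and therefore $a_1,a_3\mapsto 0$. (Likewise the coefficients $\eta_R(a_i)$ of $C_2=\beta_* C_0$ vanish, consistently with the relations in $\Gamma$.) The auxiliary parameter $r$ of $\alpha$ is pinned down by the requirement that $C_1$ again be in $tmf_1(3)$-form: vanishing of the new $a_2$-coefficient gives $3r=u^2$, so $r=u^2$ in the ($\mathbb{F}_2$-)target, while vanishing of the new $a_4$- and $a_6$-coefficients gives $r^2=0$ and $q^2=r^3$, i.e.\ $u^4=0$ and $v^2=0$ — exactly the presentation $H_*^{tmf}(tmf_1(3))=\mathbb{F}_2[u,v]/(u^4,v^2)$ recalled above, which is a reassuring consistency check.

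For (ii): each isomorphism is encoded by the substitution it induces, with composition law: $(r_1,p_1,q_1)$ followed by $(r_2,p_2,q_2)$ has parameters $(r_1+r_2,\ p_1+p_2,\ q_1+q_2+p_1r_2)$. The identity $\gamma\circ\alpha=\beta$ says that $\alpha$'s substitution $(u^2,u,v)$ followed by $\gamma$'s substitution $(s^2,s,t)$ (its first entry forced by the $tmf_1(3)$-form constraint, using $a_1\mapsto 0$) equals $\beta$'s substitution $\big((\zeta_1^2)^2,\zeta_1^2,\zeta_2^2\big)$. Comparing the first two entries gives $s=u+\zeta_1^2$, and comparing the last gives $\zeta_2^2=v+t+us^2$, hence $t=u^3+v+u(\zeta_1^2)^2+\zeta_2^2$ over $\mathbb{F}_2$. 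Finally one checks that the two relations cutting out $\Gamma$ are carried to the identities $(u+\zeta_1^2)^4=0$ and $(u+\zeta_1^2)^6=\big(u^3+v+u(\zeta_1^2)^2+\zeta_2^2\big)^2$, both of which hold in the target ring, so the prescribed assignment is a well-defined ring homomorphism.

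The main difficulty here is bookkeeping rather than mathematical content: one must correctly match each of the two $tmf_1(3)$-curves with its set of parameters, track the direction of every isomorphism as it passes through $\phi$ and $\pi$, and respect the asymmetry of the Weierstrass composition law (the correction term $p_1r_2$ pairs the linear parameter of the first substitution with the translation parameter of the second). Once these conventions are fixed the remaining algebra is routine, and the vanishing of the $a_i$ is forced entirely by working in characteristic $2$.
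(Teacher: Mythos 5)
Your proposal is correct and follows essentially the same route as the paper: identify the map (after the K\"unneth identification) as dual to $\pi\circ\phi^{-1}$, note the coefficients of the middle curve vanish mod $2$, and compute the translation parameters of the composite isomorphism from the cuspidal cubic, where your composition law with correction term $p_1r_2$ agrees with the standard Weierstrass transformation convention (and with the paper's formulas $\eta_R(a_1)=a_1+2s$, $\phi^*(\zeta_2^2)=us^2+v+t$). You simply carry out explicitly the computation the paper leaves as a one-line remark, including the form-constraint bookkeeping and the consistency check on the relations of $\Gamma$.
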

\begin{proof}
    Following the discussion above, one sees that the ring map dual to $\pi$ sends $s\mapsto s$ and $t\mapsto t$. It suffices then to show that the ring map dual to $\phi$ sends $u\mapsto u$, $v\mapsto v$, $\zeta_1^2\mapsto u+s$, and $\zeta_2^2\mapsto us^2+v+t$. This follows by computing the translation parameters of the composite
    \[(y^2=x^3)\to(y^2+a_1xy+a_3y=x^3)\to (y^2+\eta_R(a_1)xy+\eta_R(a_3)y=x^3)\]
\end{proof}

\begin{corollary}\label{cor:transferlifts}
    The classes
    \[tr(s^2),tr(s^3),tr(t),tr(st),tr(s^2t),tr(s^3t)\in \pi_*(R_0(3))\]
    have relative Adams filtration zero.
\end{corollary}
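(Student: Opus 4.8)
\emph{Strategy.} The class $tr(x)$, for $x\in\Gamma=\pi_*(tmf_1(3)\otimesover{tmf}tmf_1(3))$, is by definition the image of $x$ under the transfer of the $C_2$-Mackey functor $\underline{\pi}_*(i_*tmf_1(3)\otimesover{i_*tmf}tmf_1(3))$, whose value at $C_2/C_2$ is $\pi_*(R_0(3))$. A class in $\pi_*(R_0(3))$ has relative Adams filtration zero exactly when its image under the relative Hurewicz map $\pi_*(R_0(3))\to H_*^{tmf}(R_0(3))$ is nonzero, and this Hurewicz map is the $C_2/C_2$-component of the morphism of Mackey functors $\underline{\pi}_*(i_*tmf_1(3)\otimesover{i_*tmf}tmf_1(3))\to\underline{\pi}_*(i_*H\F_2\otimesover{i_*tmf}i_*tmf_1(3)\otimesover{i_*tmf}tmf_1(3))$ induced by $i_*tmf\to i_*H\F_2$, whose $C_2/e$-component is the underlying relative Hurewicz map $h$ of Proposition \ref{prop:relhurewicz}. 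So I would use naturality of the transfer to reduce the claim to showing $tr(h(x))\neq 0$ for $x=s^2,s^3,t,st,s^2t,s^3t$, where this transfer is taken in the target Mackey functor, which by Proposition \ref{prop:relhomologytmf03mackey} is $\F_2[u,v]/(u^4,v^2)\otimes\underline{R}_*$ applied pointwise.

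\emph{Carrying it out.} I would first note that $u$ and $v$ are inflated classes, coming from the factor $i_*H\F_2\otimesover{i_*tmf}i_*tmf_1(3)$, hence are restricted from $C_2/C_2$; by Frobenius reciprocity the transfer in the target Mackey functor is $\F_2[u,v]/(u^4,v^2)$-linear and on the $\underline{R}_*$-factor agrees with the transfer of Proposition \ref{prop:relhomologytmf03mackey}, so that $tr(1)=tr(\zeta_1^2)=0$, $tr((\zeta_1^2)^2)=\epsilon_4$, $tr((\zeta_1^2)^3)=tr(\zeta_2^2)=\epsilon_6$, $tr(\zeta_1^2\zeta_2^2)=\epsilon_8$, $tr((\zeta_1^2)^2\zeta_2^2)=\epsilon_{10}$ and $tr((\zeta_1^2)^3\zeta_2^2)=\epsilon_{12}$. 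Next, feeding $h(a_i)=0$, $h(s)=u+\zeta_1^2$ and $h(t)=u^3+v+u(\zeta_1^2)^2+\zeta_2^2$ from Proposition \ref{prop:relhurewicz} into these monomials, I would expand $h(s^2),h(s^3),h(t),h(st),h(s^2t),h(s^3t)$ inside $\F_2[u,v]/(u^4,v^2)\otimes\F_2[\zeta_1^2,\zeta_2^2]/((\zeta_1^2)^4,(\zeta_2^2)^2)$, using characteristic $2$ and the truncation relations to drop terms, and then apply the transfer termwise. For instance $h(s^2)=u^2+(\zeta_1^2)^2$ gives $tr(h(s^2))=u^2\,tr(1)+tr((\zeta_1^2)^2)=\epsilon_4\neq 0$, so $tr(s^2)$ has relative Adams filtration zero. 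The remaining five are identical in spirit; in each case the output is a nonzero $\F_2$-linear combination of distinct basis monomials $u^iv^j\epsilon_k$ — one finds $tr(h(s^3))=tr(h(t))=u\epsilon_4+\epsilon_6$, $tr(h(st))=u^2\epsilon_4+\epsilon_8$, $tr(h(s^2t))=u^2\epsilon_6+v\epsilon_4+\epsilon_{10}$, and $tr(h(s^3t))=u^3\epsilon_6+u^2\epsilon_8+uv\epsilon_4+u\epsilon_{10}+v\epsilon_6+\epsilon_{12}$.

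\emph{The main point of care.} There is no serious obstacle: the corollary is a formal consequence of Propositions \ref{prop:relhomologytmf03mackey} and \ref{prop:relhurewicz} together with naturality of transfers. The only thing requiring attention is the bookkeeping in the expansions — one must apply the truncation relations $u^4=0$, $(\zeta_1^2)^4=0$, $(\zeta_2^2)^2=0$ and the vanishing transfers $tr(1)=tr(\zeta_1^2)=0$ correctly, so as to be sure that enough terms survive in each $tr(h(x))$ that it is genuinely nonzero rather than accidentally zero. That $tr(s^3)$ and $tr(t)$ have the same Hurewicz image is harmless, since we only need nonvanishing.
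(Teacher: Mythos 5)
Your proposal is correct and takes essentially the same route as the paper: reduce via Mackey functoriality to showing $tr(h(x))\neq 0$ in the relative homology Green functor, then combine the Hurewicz formulas of Proposition \ref{prop:relhurewicz} with the transfer data of Proposition \ref{prop:relhomologytmf03mackey}. The paper simply asserts the nonvanishing of these transfers, whereas you carry out the (correct) termwise expansions explicitly.
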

\begin{proof}
    Let $h$ denote the relative Hurewicz map of (\ref{eq:relhurewicz}). The formulas of Propositions \ref{prop:relhurewicz} and \ref{prop:relhomologytmf03mackey} imply that the transfers
\[tr(h(s^2)),tr(h(s^3)),tr(h(t)),tr(h(st)),tr(h(s^2t)),tr(h(s^3t))\]
    % \begin{align*}
    %     tr(h(s^2))&=\epsilon_4&
    %     tr(h(s^3))&=u\epsilon_4+\epsilon_6&
    %     tr(h(t))&=u\epsilon_4+\epsilon_6\\
    %     tr(h(st))&=u^2\epsilon_4+\epsilon_8&
    %     tr(h(s^2t))&=u^2\epsilon_6+v\epsilon_4+\epsilon_{10}&
    %     tr(h(s^3t))&=uv\epsilon_4+(u^3+v)\epsilon_6+u^2\epsilon_8+\epsilon_{10}+\epsilon_{12}
    % \end{align*}
    are all nonzero, so by Mackey functoriality the claimed transfers are nonzero in relative homology.
\end{proof}

\subsection{The underlying $C_2$-action and the image of the restriction map} Proposition \ref{prop:relASSE2computation} tells us that the image of the restriction map of Corollary \ref{cor:relativeresinjective} is the sub-$A$-module generated by $1,e_4,e_6,e_8,e_{10},e_{12}$, where $e_i$ is any class in $\pi_i$ of relative Adams filtration zero, and Corollary \ref{cor:transferlifts} gives us preferred classes of Adams filtration zero. It remains to determine the restriction of the classes in \ref{cor:transferlifts}. The Mackey formula $res\circ tr=1+\gamma$ reduces this to determining the underlying $C_2$-action.

\begin{proposition}\label{prop:underlyingC2action}
    Let $\gamma$ denote the generator of $C_2$. The action of $C_2$ on 
    \[\pi_*(tmf_1(3)\otimesover{tmf}tmf_1(3))=\underline{\pi}_*(i_*tmf_1(3)\otimesover{i_*tmf}tmf_1(3))(C_2/e)\]
fixes $a_1$ and $a_3$ and satisfies $\gamma(s)=s-\eta_R(a_1)$ and $\gamma(t)=t-\eta_R(a_3)$.
\end{proposition}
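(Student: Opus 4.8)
The plan is to recognize the involution $\gamma$ on $\Gamma=\pi_*(tmf_1(3)\otimesover{tmf}tmf_1(3))$ as a concrete operation on Weierstrass curves and read off its effect on $a_1,a_3,s,t$. First I would locate the action. Under the identification in the statement, $\gamma$ is the Weyl action on $\underline{\pi}_*\bigl(i_*tmf_1(3)\otimesover{i_*tmf}tmf_1(3)\bigr)(C_2/e)$. The first tensor factor $i_*tmf_1(3)$ and the base $i_*tmf$ are inflated, hence their underlying spectra carry trivial residual $C_2$-action, so $\gamma$ is induced by $\mathrm{id}\otimes\gamma_0$, where $\gamma_0$ is the Weyl involution on the underlying spectrum of the genuine $C_2$-spectrum $tmf_1(3)$. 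By the identifications of \cite{hillmeier} and the computations of \cite[pg.\ 40]{CHR} that we have already used, $\gamma_0$ is the Galois involution that negates the chosen point of exact order $3$. Presenting the universal $\Gamma_1(3)$-curve as $E\colon y^2+a_1xy+a_3y=x^3$ with distinguished point $P=(0,0)$, one has $-P=(0,-a_3)$, and the unique substitution carrying $(E,-P)$ back to standard form is $x\mapsto x$, $y\mapsto y-a_1x-a_3$, which turns the equation into $y^2-a_1xy-a_3y=x^3$. Thus $\gamma_0^*(a_1)=-a_1$ and $\gamma_0^*(a_3)=-a_3$ with no rescaling, and this re-standardization isomorphism has translation parameters $(r,s,t)=(0,-a_1,-a_3)$ — crucially with $r=0$, the standard form already having vanishing $a_2$.

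Next I would transport this along the presentation $\Spec(\Gamma)=\Spec(A)\timesover{\mathcal{M}_{cub}}\Spec(A)$ already in play in Proposition \ref{prop:relhurewicz} and the discussion preceding it: an $R$-point is a triple $(C,C',\phi)$ with $C,C'$ cubics in standard $\Gamma_1(3)$-form, $\eta_L$ classifying $C$, $\eta_R$ classifying $C'$, and $\phi\colon C\xrightarrow{\sim}C'$ the isomorphism $x\mapsto x+r$, $y\mapsto y+sx+t$ with parameters $s,t$ (and $r$ determined). Since $\gamma=\mathrm{id}\otimes\gamma_0$ alters only the right-hand factor, it sends $(C,C',\phi)$ to $(C,\iota^*C',n_{C'}\circ\phi)$, where $\iota^*C'$ is the standard model of $(E_{C'},-P_{C'})$ and $n_{C'}\colon C'\to\iota^*C'$ is the re-standardization isomorphism from the first paragraph; for $C'\colon y^2+\eta_R(a_1)xy+\eta_R(a_3)y=x^3$ it has parameters $(0,-\eta_R(a_1),-\eta_R(a_3))$. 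The source curve $C$ is untouched, so $\gamma(a_1)=a_1$ and $\gamma(a_3)=a_3$. Substituting the coordinate change of $n_{C'}$ into that of $\phi$ — since the $r$-parameter of $n_{C'}$ vanishes, the $s$- and $t$-parameters of the composite are simply the sums — shows that $n_{C'}\circ\phi$ has parameters $(r,\,s-\eta_R(a_1),\,t-\eta_R(a_3))$, i.e.\ $\gamma(s)=s-\eta_R(a_1)$ and $\gamma(t)=t-\eta_R(a_3)$, as claimed.

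The main obstacle is the bookkeeping in these two steps: one must be sure the Weyl action really is the negation involution with no stray $\mathbb{G}_m$-rescaling contaminating $\gamma_0^*$, and one must keep straight which copy of $A$ supplies the correction terms — $\gamma$ modifies only the \emph{target} curve of the universal isomorphism, so the corrections are the $\eta_R$-images, not the $\eta_L$-images. As a consistency check, the resulting formulas force $\gamma(\eta_R(a_i))=-\eta_R(a_i)$ (the target curve is negated) and $\gamma(r)=r$, and one verifies directly that $\gamma^2=\mathrm{id}$ on $s$ and $t$; everything else is a routine computation with Weierstrass substitutions.
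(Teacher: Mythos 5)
Your argument is correct, and it reaches the formulas by a genuinely different route in the key step. You and the paper share the same starting point: the action is $\mathrm{id}\otimes\gamma_0$, trivial on the left unit (hence $a_1,a_3$ are fixed — the paper phrases this via equivariance of $\eta_L$) and given on the right factor by the involution negating the $\Gamma_1(3)$-point, so $\gamma_0(a_i)=-a_i$ (the paper cites Mahowald--Rezk for this; you rederive it from the substitution $y\mapsto y-a_1x-a_3$, which is fine). Where you diverge: the paper never touches the moduli of triples again — it only uses equivariance of $\eta_R$ to get $\gamma(\eta_R(a_i))=-\eta_R(a_i)$, and then solves for $\gamma(s),\gamma(t)$ algebraically inside $\Gamma$ from the explicit right-unit formulas $\eta_R(a_1)=a_1+2s$, $\eta_R(a_3)=a_3+\tfrac13 a_1s^2+\tfrac13a_1^2s+2t$, using torsion-freeness of $\Gamma$ to divide by $2$. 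You instead compute the action directly on $R$-points of $\Spec(A)\times_{\mathcal{M}_{cub}}\Spec(A)$ by composing the universal coordinate change with the re-standardization substitution $(r,s,t)=(0,-\eta_R(a_1),-\eta_R(a_3))$; your bookkeeping is right (with the standard convention the cross term in the composite is $s\cdot r_{n_{C'}}$, which vanishes because the re-standardization has $r=0$, and applying the correction to the \emph{target} curve correctly produces $\eta_R$-images). What each buys: the paper's deduction is shorter and needs only the action on $\pi_*tmf_1(3)$ plus torsion-freeness, avoiding any appeal to the stack-level description of $\gamma_0$ beyond what is already used elsewhere in Section 5; your computation needs the identification of $\gamma_0$ as an automorphism of the moduli problem (not merely of homotopy groups) and careful composition conventions, but it dispenses with the torsion-freeness/divide-by-2 step, gives the formulas with their geometric meaning visible, and your checks $\gamma(\eta_R(a_i))=-\eta_R(a_i)$ and $\gamma^2=\mathrm{id}$ recover exactly the paper's main input as a corollary.
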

\begin{proof}
    The classes $a_1$ and $a_3$ are fixed because they are in the image of the equivariant map
    \[\eta_L:i_*tmf_1(3)\to i_*tmf_1(3)\otimesover{i_*tmf}tmf_1(3)\]
    and $C_2$ acts trivially on the source. The equivariant map
    \[\eta_R:tmf_1(3)\to i_*tmf_1(3)\otimesover{i_*tmf}tmf_1(3)\]
    sends $a_1\mapsto\eta_R(a_1)$ and $a_3\mapsto\eta_R(a_3)$, and $\gamma(a_1)=-a_1$ and $\gamma(a_3)=-a_3$ in the source (see \cite[Proposition 3.4]{mahowaldrezk})). The formulas
    \begin{align*}
        \eta_R(a_1)&=a_1+2s & \eta_R(a_3)&=a_3+\frac{1}{3}a_1s^2+\frac{1}{3}a_1^2s+2t
    \end{align*}
    along with the fact that $\pi_*(tmf_1(3)\otimesover{tmf}tmf_1(3))$ is torsion free now imply the claimed formulas for $\gamma(s)$ and $\gamma(t)$.
\end{proof}

We now can determine the restriction of each of the transfers of Corollary $\ref{cor:transferlifts}$. In the following, let $(A,\Gamma)=(\Z[a_1,a_3],A[s,t]/\sim)$ denote the $2$-local Weierstrass algebroid.

\begin{theorem}\label{thm:tmf03computation}
    There is an isomorphism of left $(A,\Gamma)$-comodules
     \begin{align*}
     \pi_*R_0(3)&=\pi_*(tmf_1(3)\otimesover{tmf}tmf_0(3))\cong \frac{A\{1,e_4,e_6,e_8,e_{10},e_{12}\}}{(a_1e_4=2e_6,a_3e_4=2e_{10},a_3e_6=a_1e_{10})}\\
        \psi(e_4)&=1\otimes e_4+e_4\otimes 1\\
        \psi(e_6)&=1\otimes e_6-s\otimes e_4+e_6\otimes 1\\
        \psi(e_8)&=1\otimes e_8+2s\otimes e_6-\frac{s^2}{3}\otimes e_4+e_8\otimes 1\\
        \psi(e_{10})&=1\otimes e_{10}+(s^2-\frac{sa_1}{3})\otimes e_6+(t-\frac{s^3}{3})\otimes e_4+e_{10}\otimes 1\\
        \psi(e_{12})&=1\otimes e_{12}+5s\otimes e_{10}+(3s^2+3a_1s)\otimes e_8+(5t+\frac{44}{9}s^3+\frac{16}{9}a_1s^2+\frac{2}{3}a_1^2s)\otimes e_6\\&+(a_1s^3-3a_3s-3a_1t-11st)\otimes e_4+e_{12}\otimes 1
    \end{align*}
    Moreover, $\pi_*R_0(3)$ is the sub $(A,\Gamma)$-comodule algebra of $\Gamma$ generated by the classes
    \begin{align*}
        e_4&:=2s^2+2sa_1\\
        e_6&:=a_1^2s+a_1s^2\\
        e_8&:=2st+a_1t+a_3s+\frac{a_1^2s^2}{3}+\frac{a_1s^3}{3}\\
        e_{10}&:=a_1a_3s+a_3s^2\\
        e_{12}&:=2s^3t-a_1^2a_3s+a_3s^3+a_1^3t+4a_1^2st+5a_1s^2t
    \end{align*}

    % \begin{align*}
    %     \psi(e_4)&=1\otimes e_4+2s\otimes a_1-2s^2\otimes 1\\
    %     \psi(e_6)&=1\otimes e_6+s\otimes(a_1^2-e_4)-3s^2\otimes a_1+2s^3\otimes 1\\
    %     \psi(e_8)&=1\otimes e_8+s\otimes(a_3+2e_6)-\frac{s^2}{3}\otimes e_4+t\otimes a_1-2st\otimes 1\\
    %     \psi(e_{10})&=1\otimes e_{10}-\frac{s}{3}\otimes a_1e_6+s^2\otimes (e_6+a_3)-(\frac{s^3}{3}+t)\otimes e_4-t\otimes a_1^2+2st\otimes a_1-2s^2t\otimes 1\\
    %     \psi(e_{12})&=1\otimes e_{12}+s\otimes(\frac{2}{3}a_1^2e_6+3a_1e_8+5e_{10})-s^2\otimes(a_1a_3+3e_8+\frac{8}{9}a_1e_6)\\
    %     &+s^3\otimes(3a_3+4e_6)+t\otimes(2a_1^3+5e_6)+(a_1s^3-3a_3s-3a_1t-11st)\otimes e_4\\
    %     &-7st\otimes a_1^2+9s^2t\otimes a_1-6s^3t\otimes 1
    %\end{align*}
\end{theorem}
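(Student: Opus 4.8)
The strategy is to pin down $\pi_*R_0(3)$ completely inside the torsion-free ring $\Gamma$, combining three inputs: the injection of $MU_*MU$-comodule algebras $\res\colon\pi_*R_0(3)\hookrightarrow\Gamma$ of Corollary~\ref{cor:relativeresinjective}, the collapse of the relative Adams spectral sequence in Proposition~\ref{prop:relASSE2computation}, and the explicit relative Adams filtration zero transfer classes of Corollary~\ref{cor:transferlifts}.

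\emph{Locating the generators in $\Gamma$.} By Proposition~\ref{prop:relASSE2computation} and the injection into the torsion-free ring $\Gamma$, the group $\pi_*R_0(3)$ is a free $\Z_{(2)}$-module, generated over $A=\Z_{(2)}[a_1,a_3]$ by filtration-zero classes lifting the $E_\infty$-generators $e_0=1,e_4,e_6,e_8,e_{10},e_{12}$, with $v_0,v_1,v_2$ detecting $2,a_1,a_3$. Corollary~\ref{cor:transferlifts} provides six filtration-zero classes $\tr(s^2),\tr(s^3),\tr(t),\tr(st),\tr(s^2t),\tr(s^3t)$, and the Mackey formula $\res\circ\tr=1+\gamma$ together with the $C_2$-action of Proposition~\ref{prop:underlyingC2action} (so $\gamma(s)=-s-a_1$ and $\gamma(t)=-t-a_3-\tfrac13a_1s^2-\tfrac13a_1^2s$) computes their images in $\Gamma$. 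For instance $\res\tr(s^2)=s^2+(s+a_1)^2=2s^2+2a_1s+a_1^2=e_4+a_1^2\cdot 1$, so $e_4:=2s^2+2a_1s$ is a filtration-zero lift of the $E_\infty$-class $e_4$; working $2$-locally (so $3$ is invertible) the analogous manipulations of $\tr(s^3)$ (or $\tr(t)$), $\tr(st)$, $\tr(s^2t)$, $\tr(s^3t)$ yield the stated polynomial representatives $e_6,e_8,e_{10},e_{12}\in\Gamma$, each detecting the corresponding $E_\infty$-generator. Hence $1,e_4,\dots,e_{12}$ generate $\pi_*R_0(3)$ as an $A$-module.

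\emph{The module presentation and algebra structure.} The three relations $a_1e_4=2e_6$, $a_3e_4=2e_{10}$, $a_3e_6=a_1e_{10}$ are checked by direct substitution into the domain $\Gamma$ (e.g.\ $a_1e_4=2a_1s^2+2a_1^2s=2e_6$). Since $\pi_*R_0(3)$ is torsion free, Proposition~\ref{prop:relASSE2computation} gives its bigraded ranks exactly, and comparing Hilbert series with the proposed presentation $A\{1,e_4,\dots,e_{12}\}/(a_1e_4-2e_6,\,a_3e_4-2e_{10},\,a_3e_6-a_1e_{10})$ shows there are no further relations. Because $\Gamma$ is a commutative ring, products of the $e_i$ are computed there and automatically lie in the $A$-submodule $A\{1,e_4,\dots,e_{12}\}$, which realises $\pi_*R_0(3)$ as the asserted sub-$(A,\Gamma)$-comodule algebra of $\Gamma$ once the coaction is identified.

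\emph{The coaction.} Under $\res$, the left coaction $\psi$ on $\pi_*R_0(3)$ is the restriction of the coproduct $\Delta\colon\Gamma\to\Gamma\otimes_A\Gamma$ of the $2$-local Weierstrass Hopf algebroid $(A,\Gamma)$ (compatibility of coactions for $tmf_0(3)\to tmf_1(3)$ smashed over $tmf$ with $tmf_1(3)$). The coproduct is a ring map with $\Delta(a_i)=a_i\otimes 1$, and the Hopf algebroid identities $\Delta\eta_R(a)=1\otimes\eta_R(a)$ and $1\otimes\eta_L(a)=\eta_R(a)\otimes 1$ applied to the formulas $\eta_R(a_1)=a_1+2s$, $\eta_R(a_3)=a_3+\tfrac13a_1s^2+\tfrac13a_1^2s+2t$ of Proposition~\ref{prop:underlyingC2action} (equivalently \cite[Section 7]{bauer}) force $\Delta(s)=s\otimes 1+1\otimes s$ and pin down $\Delta(t)$. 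Substituting the $\Gamma$-formulas for the $e_i$ into $\Delta$ and repeatedly using $1\otimes\eta_L(a)=\eta_R(a)\otimes 1$ to move $A$-scalars across the tensor, one rewrites each $\Delta(e_i)$ as an element of $\Gamma\otimes_A A\{1,e_4,\dots,e_{12}\}$ — it necessarily lands there since this submodule is a sub-comodule — and reading off coefficients yields the asserted formulas for $\psi(e_4),\dots,\psi(e_{12})$. The computational heart is exactly this last step: one needs clean control of $\Delta$ on $\Gamma=A[s,t]/\sim$, and then must carry out the increasingly heavy bookkeeping — worst for $e_{10}$ and especially $e_{12}$, with their $3$-power denominators — of reducing $\Delta(e_i)$ to the basis $\{1,e_4,\dots,e_{12}\}$; the cancellations that return $\Delta(e_i)$ to $\Gamma\otimes_A M$ are guaranteed in principle but must be executed correctly, with useful internal checks being that the counit recovers $e_i$ and that the resulting $\psi(e_i)$ respect the module relations above.
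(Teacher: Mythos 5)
Your proposal is correct and follows essentially the same route as the paper: it identifies the filtration-zero generators by computing $res(tr(-))$ of the transfer classes via the Mackey formula and the $C_2$-action of Proposition \ref{prop:underlyingC2action}, simplifies by subtracting $A$-multiples of restrictions, and then reads off the $A$-module relations and the coaction inside $\Gamma$ using the injectivity of the restriction map. The extra remarks (rank comparison with the collapsed relative Adams $E_\infty$-page and computing $\psi$ as the restriction of the Weierstrass coproduct $\Delta$) are just explicit elaborations of the steps the paper leaves implicit.
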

\begin{proof}
    Using the action formulas of Proposition \ref{prop:underlyingC2action}, we compute 
    \begin{align*}
        res(tr(s^2))&=a_1^2+2a_1s+2s^2\\
        res(tr(s^3))&=-a_1^3-3a_1^2s-3a_1s^2\\
        res(tr(t))&=-a_3-\frac{a_1^2s}{3}-\frac{a_1s^2}{3}\\
        res(tr(st))&=a_1a_3+\frac{a_1^3s}{3}+a_3s+\frac{2a_1^2s^2}{3}+\frac{a_1s^3}{3}+a_1t+2st\\
        res(tr(s^2t))&=-a_1^2a_3-\frac{a_1^4s}{3}-3a_1a_3s-a_1^3s^2-a_3s^2-\frac{2a_1^2s^3}{3}-2a_1^2t-4a_1st\\
        res(tr(s^3t))&=a_1^3a_3+\frac{a_1^5s}{3}+6a_1^2a_3s+\frac{4a_1^4s^2}{3}+4a_1a_3s^2+a_1^3s^3+a_3s^3+4a_1^3t\\
        &+10a_1^2st+5a_1s^2t+2s^3t
    \end{align*}
    Note that $a_1$ and $a_3$ are in the image of the restriction and are in strictly positive Adams filtration. Subtracting off terms of the form $p(a_i)res(x)$ therefore still gives a class of Adams filtration zero, and this yields the above simplifications. The $A$-module relations and coactions are computed in $\Gamma$ using the fact that the restriction map is injective.
\end{proof}

\printbibliography

\end{document}